\def\@settitle{%
  \vspace*{-20pt}
  \begin{flushleft}%
    \baselineskip14\p@\relax
    \normalfont\bfseries\LARGE
    \@title
  \end{flushleft}%
}
\def\@setauthors{%
  \begingroup
  \def\thanks{\protect\thanks@warning}%
  \trivlist
  \large \@topsep30\p@\relax
  \advance\@topsep by -\baselineskip
  \item\relax
  \author@andify\authors
  \def\\{\protect\linebreak}%
  \authors
  \ifx\@empty\contribs
  \else
    ,\penalty-3 \space \@setcontribs
    \@closetoccontribs
  \fi
  \normalfont
  \endtrivlist
  \endgroup
}
\def\@setabstracta{%
    \ifvoid\abstractbox
  \else
    \skip@25\p@ \advance\skip@-\lastskip
    \advance\skip@-\baselineskip \vskip\skip@
    \box\abstractbox
    \prevdepth\z@ 
    \vskip-10pt
  \fi
}
\renewenvironment{abstract}{%
  \ifx\maketitle\relax
    \ClassWarning{\@classname}{Abstract should precede
      \protect\maketitle\space in AMS document classes; reported}%
  \fi
  \global\setbox\abstractbox=\vtop \bgroup
    \normalfont\small
    \list{}{\labelwidth\z@
      \leftmargin0pc \rightmargin\leftmargin
      \listparindent\normalparindent \itemindent\z@
      \parsep\z@ \@plus\p@
      
    }%
    \item[\hskip\labelsep\bfseries\abstractname.]%
}{%
  \endlist\egroup
  \ifx\@setabstract\relax \@setabstracta \fi
}
\def\section{\@startsection{section}{1}%
  \z@{-1.2\linespacing\@plus-.5\linespacing}{.8\linespacing}%
  {\normalfont\bfseries\large}}
\def\subsection{\@startsection{subsection}{2}%
  \z@{-.8\linespacing\@plus-.3\linespacing}{.3\linespacing\@plus.2\linespacing}%
  {\normalfont\bfseries}}
\def\subsubsection{\@startsection{subsubsection}{3}%
  \z@{.7\linespacing\@plus.1\linespacing}{-1.5ex}%
  {\normalfont\itshape}}
\def\@secnumfont{\bfseries}
\theoremstyle{plain}
\newtheorem{theorem}{Theorem}[section]
\newtheorem*{thmx}{Theorem}
\newtheorem{proposition}[theorem]{Proposition}
\newtheorem{lemma}[theorem]{Lemma}
\theoremstyle{definition}
\newtheorem{question}[theorem]{Question}
\newtheorem{definition}[theorem]{Definition}
\newtheorem{example}[theorem]{Example}
\theoremstyle{remark}
\newtheorem{remark}[theorem]{Remark}
\newcommand{\ddge}{\rotatebox[origin=c]{270}{$\ge$}}
\newcommand{\C}{\mathbb{C}}
\newcommand{\R}{\mathbb{R}}
\newcommand{\Z}{\mathbb{Z}}
\newcommand{\CP}{\mathbb{C}P}
\def\pa{\partial}
\def\mcal{\mathcal}
\def\frak{\mathfrak}
\def\scr{\mathscr}
\numberwithin{equation}{section} \numberwithin{table}{section}
\def\to{\mathchoice{\longrightarrow}{\rightarrow}{\rightarrow}{\rightarrow}}
\newcommand{\shortxra}[2][]{\ext@arrow 0359\rightarrowfill@{#1}{#2}}
\def\longrightarrowfill@{\arrowfill@\relbar\relbar\longrightarrow}
\newcommand{\longxra}[2][]{\ext@arrow 0359\longrightarrowfill@{#1}{#2}}
\numberwithin{equation}{section}
\begin{document}                                                                          
\title[Non-displaceable Lagrangian submanifolds in two-step flag varieties]{On non-displaceable Lagrangian submanifolds in two-step flag varieties}

\author{Yoosik Kim}
\address{Department of Mathematics and Institute of Mathematical Science, Pusan National University}
\email{yoosik@pusan.ac.kr}

\begin{abstract}
We prove that the two-step flag variety $\mcal{F}\ell(1,n;n+1)$ carries a non-displaceable and non-monotone Lagrangian Gelfand--Zeitlin fiber diffeomorphic to $S^3 \times T^{2n-4}$ and a continuum family of non-displaceable Lagrangian Gelfand--Zeitlin torus fibers when $n > 2$. 
\end{abstract}


\maketitle
\setcounter{tocdepth}{1} 
\tableofcontents

\section{Introduction}
\label{secIntroduction}

In his celebrated work \cite{Floerinters}, Floer invented Lagrangian Floer homology to prove a version of Arnold conjectures, every compact Lagrangian submanifold that does \emph{not} bound any non-constant holomorphic disk \emph{cannot} be displaced by any Hamiltonian diffeomorphisms. This result alludes that a symplectic manifold carries a Lagrangian submanifold having additional intersection properties and leads to the following definition.

\begin{definition}
A Lagrangian submanifold $L$ is called \emph{non-displaceable} by a Hamiltonian diffeomorphism, that is, $L \cap \phi(L) \neq \emptyset$ for any $\phi \in \mathrm{Ham}(M, \omega)$.
\end{definition}

In fact, even if a Lagrangian submanifold $L$ of $(X, \omega)$ bounds a holomorphic disk, $L$ can be non-displaceable. The fact motivates the development of Lagrangian Floer theory in a general setting and it has been an interesting question in symplectic topology to construct and detect non-displaceable Lagrangian submanifolds. The goal of this paper is to add new non-displaceable Lagrangian submanifolds to the list.

One of the main techniques to show non-displaceability is to deform Lagrangian Floer theory. Floer cohomology and its deformations by cycles and ambient cycles were developed and Floer cohomology of toric fibers on toric manifolds was computed, see \cite{ChoOh, FOOOToric1,FOOOToric2, ChoPoddar, Woodward} for instance. One of the interesting features is that a certain symplectic toric manifold/orbifold admits a continuum of non-displaceable Lagrangian toric fibers. The positions of non-displaceable toric fibers in a compact toric manifold detected by deformed Floer cohomology can be characterized as an intersection of tropicalizations in \cite{KimLeeSanda}. 

A partial flag manifold has been explored as it can be understood via the central toric variety of a toric degeneration. Also, each partial flag manifold $\mathrm{SL}(n, \C)/P$ carries a Gelfand--Zeitlin completely integrable system which shares many nice properties that a toric moment map has, see \cite{GuilleminSternbergGC}. For instance, each component generates a Hamiltonian circle action on an open dense subset and the image is a polytope and the fiber over each point in the interior of the polytope is a Lagrangian torus. The disk potential function for the Gelfand--Zeitlin system of a partial flag manifold was computed in \cite{NishinouNoharaUeda}. As a byproduct, one obtains non-displaceability of the Lagrangian torus at the ``center". Interestingly, unlike the case of toric moment maps, the Gelfand--Zeitlin system often possesses non-torus Lagrangian fibers at a stratum of the polytope. The topology of non-torus fibers was recently studied, see \cite{ChoKimOhLag, BoulocMirandaZung, CarlsonLane}. The following question can be taken into account.

\begin{question}\label{question2}
Classify all non-displaceable Lagrangian Gelfand--Zeitlin fibers. Which Lagrangian Gelfand--Zeitlin non-torus fiber is non-displaceable? 
\end{question}

When a partial flag manifold is a complex projective space, the Gelfand--Zeitlin system is a toric moment map so that the system does not have any Lagrangian non-toric fibers and there exists a unique non-displaceable fiber at the center by combining \cite{ChoSpin, McDuff}. Let us denote by $\CP(V)$ the complex projective space of a complex vector space $V$. Besides the complex projective spaces, the only known case is a complete flag variety 
$$
\mcal{F}\ell(1,2;3) = \{ (0 \subset V_1 \subset V_2 \subset \C^3) \mid \dim_\C V_j = j \mbox{ for } j = 1, 2 \} \subset \CP(\C^3) \times \CP( \wedge^2 \C^3)
$$ 
by combining \cite{ChoKimOhLGbulk, Pabiniak}. More precisely, if $\mcal{F}\ell(1,2;3)$ is equipped with a \emph{non-monotone} pull-backed symplectic form from a product of Fubini--Study forms, then the Lagrangian torus fiber at the center is the \emph{only} non-displaceable fiber. Suppose that $\mcal{F}\ell(1,2;3)$ is equipped with a \emph{monotone} pull-backed symplectic form from a product of Fubini--Study forms. In this case, the Gelfand--Zeitlin system has a \emph{unique} non-torus Lagrangian fiber and this fiber is diffeomorphic to a $3$-sphere. Let $I$ be the line segment joining the location of the $S^3$-fiber and the center of the image of the Gelfand--Zeitlin system. The fiber over every point of $I$ is non-displaceable and the fiber over every point in the complement of $I$ is displaceable.
 
Some partial answers are given in some cases. A non-displaceable Lagrangian $\mathrm{U}(n)$-fiber in $\mathrm{Gr}(n, {2n})$ was shown to be non-displaceable in \cite{NoharaUedanontorus, EvansLekili}. For a general complete flag manifold $\mcal{F}\ell(1, 2, \cdots, n ;n+1)$ with $n \geq 3$, the polytope $\Delta$ has multiple line segments joining the center of $\Delta$ to the center of a certain face at which a monotone Lagrangian non-toric Lagrangian submanifold is located. The fiber at each point of the union of line segments was shown to be non-displaceable in \cite{ChoKimOhLGbulk}. In particular, $\mcal{F}\ell(1, 2, \cdots, n ;n+1)$ admits non-displaceable Lagrangian submanifolds diffeomorphic to a product of unitary group and torus.

One of the reasons why complete flag varieties were explored for the first time in \cite{ChoKimOhLGbulk} is something to do with the dimension of the space of deformations for Lagrangian Floer theory. The dimension of the space of deformations by ambient cycles of codimension two is the second betti number. As we have the larger second betti number, we have more room to deform Lagrangian Floer theory. In this regard, Grassmannians are the most difficult spaces as the second betti number is one. 

In this article, we tackle the next case where the second betti number is two. We explore non-displaceable Lagrangians in the two-step flag varieties 
$$
\mcal{F}\ell(1,n;n+1) = \{ (0 \subset V_1 \subset V_{n} \subset \C^{n+1}) \mid \dim_\C V_j = j \mbox{ for } j = 1, n \} \to \CP(\C^{n+1}) \times \CP\left(\wedge^n \C^{n+1} \right)
$$ 
equipped a pull-backed symplectic form from a \emph{monotone} product of Fubini--Study form. Here, the monotonicity means that $\lambda = \lambda^\prime$ in the product symplectic form $\lambda \cdot \omega_{\CP(\C^{n+1})} \oplus \lambda^\prime \cdot \omega_{\CP\left(\wedge^n \C^{n+1} \right)}$. Let the unitary group $G \coloneqq \mathrm{U}(n+1)$ act on the dual Lie algebra $\frak{g}^*$ of its Lie algebra $\frak{g}$ via the coadjoint representation. Let $E_{i,j}$ be the matrix whose $(i,j)$ entry is one and the other entries are all zero. Let $\epsilon_j$ be the linear map  on the set of diagonal matrices defined by the assignment $\epsilon_j (E_{ii}) = \delta_{ij}$. Let $\varpi_i \coloneqq \epsilon_1 + \cdots + \epsilon_i$ be the $i$-th fundamental weight. 

\begin{thmx}[Theorem~\ref{theorem_main} and~\ref{theorem_GZtorusfibernon}]
For $n \geq 3$, choose $\lambda = \varpi_1 + \varpi_n \in \frak{g}^*$ and consider the coadjoint orbit $\mcal{O}_\lambda$ of $\lambda$ equipped with a Kirillov--Kostant--Souriau symplectic form, which is isomorphic to the two-step flag variety $\mathcal{F}\ell(1, n; n+1)$ with the pull-backed form. Let $\Phi_\lambda \colon \mcal{O}_\lambda \to \R^{2n-1}$ be the Gelfand--Zeitlin system. Then it carries a continuum of non-displaceable Lagrangian tori and a non-displaceable non-monotone non-toric Lagrangian submanifold diffeomorphic to $S^3 \times T^{2n-4}$.
\end{thmx}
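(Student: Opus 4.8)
The plan is to present $\mcal{F}\ell(1,n;n+1)$ with its monotone pulled-back form as the coadjoint orbit $\mcal{O}_\lambda$ for $\lambda=\varpi_1+\varpi_n=(2,1,\dots,1,0)\in\frak{g}^*$ and to study the Gelfand--Zeitlin system $\Phi_\lambda\colon\mcal{O}_\lambda\to\Delta_\lambda\subset\R^{2n-1}$. First I would write $\Delta_\lambda$ explicitly: because the interior eigenvalues of every leading principal block are forced to equal $1$, the genuinely free Gelfand--Zeitlin coordinates are $a_k:=\lambda^{(k)}_1$ and $b_k:=\lambda^{(k)}_k$ for $2\le k\le n$ together with $c:=\lambda^{(1)}_1$, constrained by $1\le a_2\le\cdots\le a_n\le 2$, $\;0\le b_n\le\cdots\le b_2\le 1$, and $b_2\le c\le a_2$. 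Then I would single out the codimension-$3$ face $F=\{a_2=b_2=c=1\}$, over which the $2\times2$ leading principal block is forced to be the identity; invoking the known classification of Gelfand--Cetlin fibers (Cho--Kim--Oh, Bouloc--Miranda--Zung, Carlson--Lane), the fiber $L_0:=\Phi_\lambda^{-1}(u_0)$ over a point $u_0$ in the relative interior of $F$ is diffeomorphic to $S^3\times T^{2n-4}$ --- the $S^3$ carried by the scalar block and the torus by the still-generic coordinates $a_3,\dots,a_n,b_3,\dots,b_n$. I would also record that $L_0$ is non-monotone: as $u_0$ lies on three facets of $\Delta_\lambda$ rather than at the balanced interior point, the areas of the non-constant Maslov-$2$ disks of $L_0$, read off from the lattice distances of $u_0$ to the remaining facets, are not all equal; when $n=2$ there are no such relative classes, $L_0=S^3$ is monotone, and this is precisely why the non-monotone phenomenon requires $n>2$.

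Next I would produce the continuum of non-displaceable tori (Theorem~\ref{theorem_GZtorusfibernon}) by deforming the disk potential. Starting from the Nishinou--Nohara--Ueda ladder-diagram formula for the potential $W$ of the Gelfand--Zeitlin torus fibers of $\mcal{O}_\lambda$, and using that $b_2(\mcal{O}_\lambda)=2$, I would deform $W$ by the two-parameter family of ambient $2$-cycles $\frak{b}$ spanned by the facet divisors pulled back from $\CP(\C^{n+1})$ and $\CP(\wedge^n\C^{n+1})$. Following the scheme of Cho--Kim--Oh, I would then solve the critical point equations of the bulk-deformed potential $W^{\frak{b}}$ in the fiber coordinates and the Novikov local system, and extract a positive-dimensional set of fiber positions, each admitting for a suitable $\frak{b}$ a critical bounding cochain; every such torus has non-vanishing $\frak{b}$-deformed Floer cohomology and is therefore non-displaceable. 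Letting $\frak{b}$ vary gives the asserted continuum.

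The core of the paper, and where I expect the main obstacle, is non-displaceability of the non-toric fiber $L_0\cong S^3\times T^{2n-4}$ (Theorem~\ref{theorem_main}), which I would approach by computing the $\frak{b}$-deformed Floer cohomology $HF\big((L_0,b),\frak{b}\big)$ directly. The first task is to control the moduli of holomorphic disks bounded by $L_0$: I would obtain the relevant disk classes and their contributions by degenerating from the nearby Gelfand--Zeitlin torus fibers --- so that the potential of $L_0$ on bounding cochains from $H^1(T^{2n-4})$ is read off from a limit of the torus potentials in which the three torus variables collapsing along $F$ drop out, mirroring the $\mcal{F}\ell(1,2;3)$ model --- and then incorporate the extra $\CP^1$-family of Maslov-$2$ disks created by the $S^3$ factor. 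With this input a Morse--Bott / pearl-type computation expresses the $\frak{b}$-deformed Floer differential on $H^\ast(S^3\times T^{2n-4})$, and the goal becomes finding a pair $(b,\frak{b})$ --- $b$ built from $H^1(T^{2n-4})$ and $\frak{b}$ in the two-dimensional bulk space --- that kills this differential. Since $S^3$ contributes to Floer cohomology in degrees $0$ and $3$ with no room for a local system, the delicate point is to force the $S^3$-disk terms in the differential to vanish simultaneously with the torus-type critical point equations; controlling those $S^3$-contributions is, I expect, the crux of the argument. Two alternative routes I would keep in mind are: exhibiting a sequence of non-displaceable Gelfand--Zeitlin tori converging to $u_0$ and using that non-displaceability passes to Hausdorff limits of fibers (which still requires a careful analysis of $W^{\frak{b}}$ as the fiber approaches $F$); or performing symplectic reduction of $\mcal{O}_\lambda$ by the Hamiltonian $T^{2n-4}$-action generated by $a_3,\dots,a_n,b_3,\dots,b_n$ at a level for which the $6$-dimensional reduced space is a monotone copy of $\mcal{F}\ell(1,2;3)$, where $L_0$ descends to its non-displaceable $S^3$-fiber, and transporting non-displaceability back to $L_0$ via a K\"unneth-type comparison across the reduction.
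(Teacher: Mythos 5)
Your plan reproduces the paper's framework for the torus part: realize $\mcal O_\lambda\simeq\mcal F\ell(1,n;n+1)$ with the monotone KKS form, identify the $S^3\times T^{2n-4}$ fiber on the codimension-$3$ Lagrangian face $f_1$, deform the Nishinou--Nohara--Ueda potential by bulk insertions $\frak b$ from the rank-$2$ group $H^2(\mcal O_\lambda)$, and solve the critical-point equations over $\mathrm U(\Lambda)$ along the segment from the barycenter $\mathbf u_0$ to the non-torus position $\mathbf u_1$; this is Theorem~\ref{theorem_GZtorusfibernon} and Section~\ref{Sec_bulkdeformation}. But your description of the bulk cycles glosses over a genuine gap. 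The paper does not insert smooth divisors pulled back from the two $\CP^n$'s; it uses the toric divisors $\scr D_0=V(p_{n+1})$ and $\underline{\scr D}_0=V(p_{\underline{n+1}})$ of the central toric fiber $\mcal X_0$, transported by $\psi_{1,0}^{-1}$ from the toric degeneration. For $n\ge 3$ these divisors are \emph{singular} (their singular loci lie over the codimension-$3$ face $g$), and $\psi_{1,0}$ is a symplectomorphism only away from $\mathrm{Sing}(\mcal X_0)$, so what one transports is a map from a non-compact manifold, not a cycle. All of Section~\ref{section_pseudocycles}---culminating in Proposition~\ref{proposition_pseudocycle}, which shows via a stratified Mayer--Vietoris estimate over $\Phi_\lambda^{-1}(g)$ that $\varphi,\underline\varphi$ are pseudocycles in Zinger's sense---exists precisely so that $\scr D,\underline{\scr D}$ define honest integral cycles and the intersection numbers $\beta\cap\frak b$ in~\eqref{equ_pob} are well defined. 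Your proposal skips this entirely.

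The more substantial divergence is at the non-torus fiber, the step you yourself flag as the crux. You propose as your main route a direct bulk-deformed, Morse--Bott/pearl Floer computation on $L_0\simeq S^3\times T^{2n-4}$, controlling the $\CP^1$-family of $S^3$-disks and killing the $S^3$-terms in the differential---and you relegate the limit argument to a secondary ``alternative.'' The paper takes exactly that alternative and nothing else: once Theorem~\ref{theorem_GZtorusfibernon} establishes non-displaceability of the torus fibers over the whole open segment, Lemma~\ref{lemma_limitnondisplace} (non-displaceability passes to limits of fibers of a proper completely integrable system) immediately gives the result at the endpoint. No Floer theory is ever done on $S^3\times T^{2n-4}$; the obstacle you anticipate ($H^1(S^3)=0$, extra disk families) simply never arises. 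You also assert that the limit route ``still requires a careful analysis of $W^{\frak b}$ as the fiber approaches'' the non-torus position---it does not: the analysis of $W^{\frak b}$ is needed only for the torus fibers, and Lemma~\ref{lemma_limitnondisplace} is a soft intersection-theoretic statement independent of the potential. So the right move is to flip your priorities: make the limit argument the main route, and discard the direct computation.
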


\begin{remark}
The non-displaceability of non-toric Lagrangian submanifold diffeomorphic to $S^3 \times T^{2n-4}$ follows from the fact that this non-toric Lagrangian is realized as the limit of non-displaceable Lagrangians, see  Lemma~\ref{lemma_limitnondisplace}. 
\end{remark}

Previously, in the complete flag manifolds \cite{ChoKimOhLGbulk}, non-displaceable Lagrangian tori are located at a line segment joining the center of the polytope to the center of a face. In other words, the one-parameter family of non-displaceable fibers is obtained by connecting two monotone Lagrangian fibers. However, in the two-step flag variety $\mcal{F}\ell(1,n;n+1)$ equipped with a monotone symplectic form, non-displaceable Lagrangian submanifolds occur as a family of connecting monotone Lagrangian torus and \emph{non-monotone} non-torus Lagrangian fiber. The result implies that Lagrangians located at the line segment joining two centers are not the only candidates for positions of non-displaceable Lagrangians. Question~\ref{question2} is largely open and it seems that new ideas and systematic methods are necessary to tackle the question.

\subsection*{Acknowledgments}

The author is grateful to Yunhyung Cho and Yong-Geun Oh for their valuable discussion and collaboration. This paper starts from the collaboration \cite{ChoKimOhLag, ChoKimOhLGbulk}. This work was supported by a 2-Year Research Grant of Pusan National University.

\section{Preliminaries}\label{secReview}

In this section, we review some facts on two-step flag varieties $\mathcal{F}\ell(1,n;n+1)$ and Gelfand--Zeitlin systems.

\subsection{Two-step flag varieties $\mathcal{F}\ell(1,n;n+1)$}

For a natural number $n \geq 2$, the two-step flag variety $\mathcal{F}\ell(1,n;n+1)$ is defined by 
\begin{equation}\label{equ_mathcalfell1nn+1}
\mathcal{F}\ell(1,n;n+1) = \left\{ (0 \subset V_1 \subset V_2 \subset \C^{n+1}) \mid \dim_\C V_1 = 1 \mbox{ and } \dim_\C V_2 = n \right\}.
\end{equation}
The complex Lie group $G_\C \coloneqq \mathrm{SL}(n+1,\C)$ acts on $\C^{n+1}$ linearly. The linear action induces a transitive action on $\mcal{F}\ell(1,n;n+1)$ and the isotropy subgroup of the standard flag $\langle e_1 \rangle \subset \langle e_1, \cdots, e_n \rangle$ is a parabolic subgroup $P$ of $G_\C$. Thus, $\mcal{F}\ell(1,n;n+1)$ is a homogeneous space $G_\C / P$ of complex dimension,
$$
\dim_\C \mathcal{F}\ell(1,n;n+1)  = \dim_\C G_\C - \dim_\C P =  2n - 1.
$$

The two-step flag variety can be embedded into a product of projective spaces via the Pl\"{u}cker embedding
\begin{equation}\label{equ_pluckerembedding}
\mathcal{F}\ell(1,n;n+1) \to \mathbb{P} \left(\C^{n+1} \right) \times \mathbb{P} \left(\wedge^n \C^{n+1} \right) \simeq \CP^{n} \times \CP^{n}, \quad (V_1, V_2) \mapsto \left(V_1, \wedge^n V_2 \right).
\end{equation}
Let us equip $\mathcal{F}\ell(1,n;n+1)$ with the pull-back K\"{a}hler form via~\eqref{equ_pluckerembedding} induced from a multiple 
\begin{equation}\label{equ_multipleprodfsfs}
(\lambda_1 - \lambda_2) \cdot \omega_{\mathrm{FS}} \oplus \left( \lambda_2 - \lambda_{3} \right) \cdot \omega_{\mathrm{FS}} 
\end{equation}
of the product of Fubini--Study forms on $\CP^{n} \times \CP^{n}$ for some $\lambda_1, \lambda_{2}, \lambda_3 \in \mathbb{Z}$ with \begin{equation}\label{equ_monotonechoicegen}
\lambda = (\lambda_1 >  \lambda_2  > \lambda_{3}).
\end{equation}

\begin{remark}
For the expression~\eqref{equ_multipleprodfsfs}, the differences of $\lambda_\bullet$'s only matter. If $\lambda_1 - \lambda_2 = \lambda_1^\prime - \lambda_2^\prime$ and $\lambda_2 - \lambda_3 = \lambda_2^\prime - \lambda_3^\prime$,  the symplectic forms are same. A reason why the symplectic form is expressed in that way is to compare with the choice~\eqref{equ_lambdachoice} for the coadjoint orbit $\mcal{O}_\lambda$ and its associated invariant symplectic form. 
\end{remark}

In order to represent a point in $\mathcal{F}\ell(1,n;n+1)$, consider an invertible $(n+1) \times (n+1)$ matrix $Z = [z_{i,j}]_{1 \leq i, j \leq n+1}$ with complex entries. The flag corresponding to the matrix $Z$ consists of subspaces generated by the first column and the first $n$ columns of $Z$ in $\C^{n+1}$. For $I = (i_1, i_2, \cdots, i_k)$ with $I \subset \{1, 2, \cdots, n+1 \}$, the Pl\"{u}cker variable $p_I$ is defined by the minor
$$
{p}_{I} \coloneqq \det 
\begin{bmatrix}
z_{i_1,1} & z_{i_1,2} & \cdots & z_{i_1,k} \\
z_{i_2,1} & z_{i_2,2} & \cdots & z_{i_2,k} \\
\vdots & \vdots & \vdots& \vdots\\ 
z_{i_k,1} & z_{i_k,2} & \cdots & z_{i_k,k} 
\end{bmatrix}.
$$
Setting
$$
i = (i) \mbox{ and } \underline{i} \coloneqq ( 1, 2, \cdots, i-1, i+1, \cdots, n+1 ),
$$
the condition $V_1 \subset V_2$ yields the Pl\"{u}cker relation
\begin{equation}\label{equ_pluckerrel}
p_1 {p}_{\underline{1}} - p_2 {p}_{\underline{2}} + p_3 {p}_{\underline{3}} + \cdots + (-1)^{n} p_{n+1} {p}_{\underline{n+1}} = 0.
\end{equation}
In fact, the embedded variety is the bidegree $(1,1)$ hypersurface defined by~\eqref{equ_pluckerrel} in $\CP^{n} \times \CP^{n}$. 

The Pl\"{u}cker variables $p_I$'s constitute a SAGBI basis for an (anti-)diagonal term order by \cite[Theorem 5]{KoganMiller}. It in turn implies that there is a flat morphism $\Pi \colon \mcal{X} \to \C$ induced by the projection $ \CP^n \times \CP^n \times \C \to \C$ where
\begin{equation}\label{equ_toricdegenerations}
\mcal{X} \coloneqq V \left( p_1 {p}_{\underline{1}} - p_2 {p}_{\underline{2}} + t \left( p_3 {p}_{\underline{3}} + \cdots + (-1)^{n} p_{n+1} {p}_{\underline{n+1}} \right) \right) \subset \CP^n \times \CP^n \times \C.
\end{equation}
Letting $\mcal{X}_t \coloneqq \Pi^{-1}(t)$, the fiber $\mcal{X}_1$ is the exactly the embedded variety given by~\eqref{equ_pluckerrel}. Also, the central fiber $\mcal{X}_0$ is a toric variety as it is defined by a homogeneous toric (binomial) ideal. The toric variety $\mcal{X}_0$ is singular when $n \geq 2$ and its singular locus $\mathrm{Sing}(\mcal{X}_0)$ is exactly the subvariety of complex codimension three given by $p_1 = p_2 = {p}_{\underline{1}} = {p}_{\underline{2}} = 0$. 

To describe the torus action on the toric variety $\mcal{X}_0$, consider the real torus with coordinates
$$
\theta_{1{2}}, \theta_{1\underline{2}}, \theta_{\underline{1}{2}}, \theta_{\underline{1}\underline{2}}, \theta_3, \theta_{\underline{3}}, \cdots, \theta_n, \theta_{\underline{n}}
$$
which acts on $\CP^n \times \CP^n$ as
\begin{equation}\label{equ_hamiltoniantnaction}
\begin{cases}
\displaystyle \theta_{1{2}} \mapsto \left( e^{\sqrt{-1} \theta_{1{2}}} p_{\vphantom{\underline{1}}1} , e^{\sqrt{-1} \theta_{1{2}}} p_{\vphantom{\underline{2}}2} \right), \,
\theta_{1\underline{2}} \mapsto \left( e^{\sqrt{-1} \theta_{1\underline{2}}} p_{\vphantom{\underline{1}}1} , e^{\sqrt{-1} \theta_{1\underline{2}}} p_{\underline{2}} \right),  \,
\theta_{\vphantom{\underline{j}}j} \mapsto e^{\sqrt{-1} \theta_{j}} p_{\vphantom{\underline{j}}{j}}, \\
\displaystyle \theta_{\underline{1}2} \mapsto \left( e^{\sqrt{-1} \theta_{\underline{1}2}} p_{\underline{1}}, e^{\sqrt{-1} \theta_{\underline{1}2}} p_{\vphantom{\underline{1}}2} \right), \,
\theta_{\underline{1}\underline{2}} \mapsto \left( e^{\sqrt{-1} \theta_{\underline{1}\underline{2}}} p_{{\underline{1}}}, e^{\sqrt{-1} \theta_{\underline{1}\underline{2}}} p_{\underline{2}} \right),  \,
\theta_{\underline{j}} \mapsto e^{\sqrt{-1} \theta_{\underline{j}}} p_{\underline{j}}.
\end{cases}
\end{equation}
Note that the torus action induces that on the toric variety $\mcal{X}_0$. By taking a linear combination of the torus action~\eqref{equ_hamiltoniantnaction}, we may take a moment map (with respect to a suitable multiple of the K\"{a}hler form~\eqref{equ_multipleprodfsfs}) for the toric $T^{2n-1}$-action on $\mcal{X}_0$ as follows$\colon$
\begin{equation}\label{equ_coorGC}
\Phi_0 \colon \mcal{X}_0 \to \R^{2n-1} = \R \langle u_{1,n}, u_{1,n-1}, \cdots, u_{1,2}, u_{1,1}, u_{2,1}, \cdots, u_{n-1,1}, u_{n,1} \rangle
\end{equation}
defined by 
\begin{equation}\label{equ_momentmap}
\begin{cases}
\displaystyle u_{1,j} =  \lambda_2 +  (\lambda_1 - \lambda_2) \cdot \sum_{i=1}^j  \frac{| p_{\vphantom{\underline{1}}i} |^2}{\| \mathbf{\mathbf{p}} \|^2} , \quad \mbox{for $j= 2, 3, \cdots, n$},  \\
\displaystyle u_{1,1} =  \lambda_2 + (\lambda_1 - \lambda_2) \cdot \frac{| p_{\vphantom{\underline{1}}1} |^2}{\| \textbf{\textup{p}} \|^2} + (\lambda_3 - \lambda_{2}) \cdot \frac{| p_{{\underline{1}}} |^2}{\| \underline{\mathbf{\mathbf{p}}} \|^2} ,  \, \\
\displaystyle u_{j,1} = \lambda_2 + (\lambda_3 - \lambda_{2}) \cdot \sum_{i=1}^j  \frac{| p_{{\underline{i}}} |^2}{\| \underline{\mathbf{\mathbf{p}}} \|^2},   \quad \mbox{for $j= 2, 3, \cdots, n$}, \, 
\end{cases}
\end{equation}
where $\| \mathbf{p} \|^2 = \sum_{i=1}^{n+1} |p_i|^2$ and $\| \underline{\mathbf{p}} \|^2 = \sum_{i=1}^{n+1} |{p}_{\underline{i}}|^2$. Then the image of the toric moment map $\Phi_0$ in~\eqref{equ_coorGC} is the polytope determined by
\begin{equation}\label{equation_GZ-pattern}
\begin{alignedat}{17}
  &\quad \,\, \lambda_1 &&&&&&&&&&&&&&&&  \\
  &\quad \,\,\, \ddge &&&&&&&&&&&&&&&& \\
  &\quad u_{1, n} &&&&&&&&&&&&&&&& \\
  &\quad \,\,\, \ddge &&&&&&&&&&&&&&&& \\
  &\quad \,\,\, \vdots &&&&&&&&&&&&&&&&&&&& &&&&  \\
  &\quad \,\,\, \ddge &&&&&&&&&&&&&&&& \\
  &\quad u_{1,2} &&&& \geq &&&& \quad \,\,  \lambda_2 &&&&&&&&&&&&&&&&&&&&&&&&  \\
  &\quad \,\,\, \ddge &&&&&&&&\quad \,\,\, \ddge &&&&&&&& &&&&&&&&&&&&&&&&  \\
  &\quad u_{1,1} &&&& \geq &&&& \quad  u_{2,1} \quad &&&& \geq &&&& \quad  \cdots \quad &&&& \geq &&&&&&&&&&&& \quad u_{n-1,1} \quad &&&& \geq &&&& \quad \, \lambda_{3}.
\end{alignedat}
\end{equation}

This polytope given by~\eqref{equation_GZ-pattern} in $M_\R \simeq \R^{2n-1}$ is called the \emph{Gelfand--Zeitlin} polytope and is denoted by $\Delta_\lambda$. The singular locus $\mathrm{Sing}(\mcal{X}_0)$ is located at the stratum contained in $u_{1,1} = u_{1,2} = u_{2,1} = \lambda_2$ by examining the image of $\mathrm{Sing}(\mcal{X}_0)$ via the map~\eqref{equ_momentmap}.

\subsection{The Gelfand--Zeitlin polytope of $\mathcal{F}\ell(1,n;n+1)$}

We describe the face structure of the Gelfand--Zeitlin (GZ for short) polytope in terms of combinatorics of an associated ladder diagram. Consider the grid $(\Z \times \R) \cup (\R \times \Z)$ in $\R^2$ and let $\square^{(i,j)}$ be the unit box whose vertices are located at $(i,j), (i-1,j), (i,j-1),(i-1,j-1)$. In order to study $\mcal{F}\ell(1,n;n+1)$ and $\Delta_\lambda$, we take the ladder diagram associated to $\lambda$ as
$$
\Gamma_n \coloneqq \bigcup_{i=1}^{n} \left( \square^{(1,i)} \cup \square^{(i,1)} \right).
$$
There are two farthest points from the origin located at $(1, n)$ and $(n, 1)$ in $\Gamma_n$. A positive path is a shortest path from the origin to either $(1,n)$ or $(n,1)$ contained in $\Gamma_n$. Thus, the length of every positive path is $n + 1$. Let $\scr{P}(\Gamma_n)$ be the set of subgraphs that can be expressed as a union of positive paths of $\Gamma_n$ and contain both farthest points $(1, n)$ and $(n, 1)$. It is endowed with a partial ordering given by the set-theoretical inclusion $\subset$. 

The partially ordered set $\left(\scr{P}(\Gamma_n), \subset \right)$ can be used to describe the face structure of $\Delta_\lambda$. Let us overlap the ladder diagram $\Gamma_n$ and the pattern~\eqref{equation_GZ-pattern}. We fill each unit box $\square^{(i,j)}$ with the component $u_{i,j}$ and locate $\lambda_1, \lambda_2$, and $\lambda_3$ at $\square^{(1,n+2)}, \square^{(2,2)}$, and $\square^{(n+2,1)}$, respectively.  For each subgraph $\Gamma \in \scr{P}(\Gamma_n)$, the corresponding face $f_\Gamma$ is contained in the intersection of all hyperplanes given by equating two adjacent fillings that are \emph{not} divided by any segments of $\Gamma_n$, see Example~\ref{example_fillingladderd}.

\begin{theorem}[\cite{AnChoKim}]\label{theorem_AnChoKim}
Let $\scr{F}(\Delta_\lambda)$ be the set of faces of $\Delta_\lambda$ with $\lambda$ is in~\eqref{equ_monotonechoicegen} and $\scr{P}(\Gamma_n)$  the set of subgraphs which can be expressed as a union of positive paths of $\Gamma_n$.
Then there is a one-to-one correspondence 
$$
\Psi \colon \scr{P}(\Gamma_n) \to \scr{F}(\Delta_\lambda) \quad \mbox{ defined by} \quad  \Gamma \mapsto f_\Gamma
$$
such that
\begin{itemize}
\item \textup{(Order preserving)} $\Gamma \subset \Gamma^\prime \Leftrightarrow f_{\Gamma} \subset f_{\Gamma^\prime}$,
\item \textup{(Dimension)} $\mathrm{rank} \, H_1( \Gamma) = \dim f_\Gamma$.
\end{itemize}
\end{theorem}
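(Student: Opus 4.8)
The plan is to read the face lattice of $\Delta_\lambda$ directly off the inequalities~\eqref{equation_GZ-pattern} and to match it, segment by segment, with $(\scr{P}(\Gamma_n),\subset)$. First record the bookkeeping: $\Delta_\lambda$ is full-dimensional in $\R^{2n-1}$, its $2n-1$ coordinates $u_{i,j}$ are in bijection with the unit boxes of $\Gamma_n$, and each inequality in~\eqref{equation_GZ-pattern} compares two fillings separated by a single segment of $\Gamma_n$, where $\lambda_1,\lambda_2,\lambda_3$ play the role of three fixed ``boundary'' fillings placed at $\square^{(1,n+2)}$, $\square^{(2,2)}$, $\square^{(n+2,1)}$. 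A face of $\Delta_\lambda$ is cut out by turning a subset of these inequalities into equalities, and is nonempty exactly when that system, together with the strict form of the surviving inequalities, is solvable. So to a face $f$ I attach the subgraph $\Gamma(f)\subseteq\Gamma_n$ consisting of the segments whose inequality is \emph{not} forced to equality on $f$, and to $\Gamma\in\scr{P}(\Gamma_n)$ I attach the locus $f_\Gamma$ obtained by equating every two adjacent fillings whose separating segment is missing from $\Gamma$ (including equalities $u=\lambda_k$). The theorem then splits into: \textbf{(A)} $\Gamma(f)\in\scr{P}(\Gamma_n)$; \textbf{(B)} $f_\Gamma$ is a nonempty face with $\Gamma(f_\Gamma)=\Gamma$; \textbf{(C)} order-preservation; \textbf{(D)} the dimension identity.

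Parts \textbf{(A)} and \textbf{(B)} are the feasibility heart of the matter, and the key input is the monotone ladder shape of~\eqref{equation_GZ-pattern} together with the strict chain $\lambda_1>\lambda_2>\lambda_3$. For \textbf{(A)}: given a nonempty face $f$, follow the equalities imposed by the segments missing from $\Gamma(f)$; if $\Gamma(f)$ were \emph{not} a union of positive paths joining the origin to the extreme corners $(1,n)$ and $(n,1)$, it would contain a ``dead-end'' segment whose deletion still leaves a maximal run of forced equalities on one side, and propagating that run around the $L$-shape of $\Gamma_n$ would force $u=\lambda_j=\lambda_k$ for distinct $j,k$ — impossible. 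The same bookkeeping shows $(1,n),(n,1)\in\Gamma(f)$, hence $\Gamma(f)\in\scr{P}(\Gamma_n)$. For \textbf{(B)}: given $\Gamma\in\scr{P}(\Gamma_n)$, the boxes of $\Gamma_n$ together with the segments missing from $\Gamma$ are partitioned into clusters, on each of which the forced equalities amount to a single common value; because $\Gamma$ is a union of positive paths each cluster touches at most one of the three $\lambda$-boundaries, and one can then choose the remaining (unpinned) common values strictly decreasing in the order dictated by~\eqref{equation_GZ-pattern}, producing a point in the relative interior of $f_\Gamma$; in particular $f_\Gamma$ is a nonempty face and $\Gamma(f_\Gamma)=\Gamma$. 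I expect this feasibility analysis — showing that no equalities beyond the visible ones are ever forced, in both directions — to be the main obstacle, and would carry it out by induction on $n$, peeling off the end boxes $\square^{(1,n)}$ and $\square^{(n,1)}$ of the $L$.

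Granting \textbf{(A)} and \textbf{(B)}, $\Psi\colon\Gamma\mapsto f_\Gamma$ is a bijection $\scr{P}(\Gamma_n)\to\scr{F}(\Delta_\lambda)$ inverse to $f\mapsto\Gamma(f)$, and \textbf{(C)} is then formal: $\Gamma\subseteq\Gamma'$ makes $f_{\Gamma'}$ cut out by a subset of the equalities cutting out $f_\Gamma$, so $f_\Gamma\subseteq f_{\Gamma'}$; conversely each $f_\Gamma$ has a relative-interior point, hence a well-defined minimal set of tight inequalities (the segments missing from $\Gamma$), so $f_\Gamma\subseteq f_{\Gamma'}$ forces that set to grow, i.e. $\Gamma\subseteq\Gamma'$. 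For \textbf{(D)}, the free parameters of $f_\Gamma$ are exactly the common values of the clusters not pinned to any $\lambda_k$, and these ``floating'' clusters are in natural bijection with the bounded complementary faces of the planar graph $\Gamma$: a bounded face of $\Gamma$ is a union of boxes all of whose internal segments are missing, hence one floating cluster, while conversely a floating cluster has all its boundary segments present and touches no $\lambda$-box, so it is a single bounded face. Since $\Gamma$ is connected (every positive path passes through the origin) and planar, Euler's formula gives $\#\{\text{bounded faces of }\Gamma\}=\mathrm{rank}\,H_1(\Gamma)$, whence $\dim f_\Gamma=\mathrm{rank}\,H_1(\Gamma)$. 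As a sanity check, $\Gamma=\Gamma_n$ imposes no equalities, so $f_{\Gamma_n}=\Delta_\lambda$, and indeed $\mathrm{rank}\,H_1(\Gamma_n)=2n-1=\dim\Delta_\lambda$.
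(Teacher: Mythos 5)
The paper itself does not prove Theorem~\ref{theorem_AnChoKim}; it is imported verbatim from \cite{AnChoKim}, so there is no in-paper argument to compare against. Your overall plan --- read the face lattice of $\Delta_\lambda$ off the tight/non-tight inequalities of~\eqref{equation_GZ-pattern}, verify that the resulting subgraphs are exactly the unions of positive paths, and get the dimension formula from Euler's relation for the planar graph $\Gamma$ --- is a sensible route, and you are candid that (A) and (B) are only sketched. But the specific mechanism you propose for (A) does not work as stated, for a concrete reason: $\Gamma_n$ contains many edges that do not separate two fillings at all (the ``outer'' boundary edges of the $L$, e.g.\ $(0,j)$--$(0,j+1)$ for $j\ge 1$, or $(1,j)$--$(1,j+1)$ for $j\ge 2$), so they carry no inequality. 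Your rule ``$\Gamma(f) =$ segments whose inequality is not tight on $f$'' either keeps all such edges (in which case $\Gamma(f)$ is typically not a union of positive paths: for the facet $u_{1,n}=\lambda_1$, you would retain the dead-end edge $(0,n-1)$--$(0,n)$, which lies on no positive path of the resulting subgraph) or discards all of them (in which case you lose edges that genuinely belong to $\Gamma$). The claimed contradiction ``propagating a dead-end run forces $\lambda_j=\lambda_k$'' does not materialize in this example, so the argument that $\Gamma(f)\in\scr{P}(\Gamma_n)$ is not correct as written. What one actually needs is a pruning step: replace the ``non-tight'' subgraph by the union of all positive paths it contains, and then verify (nontrivially, around the $2\times 2$ square at the origin where $\lambda_2$ sits) that this pruned graph still determines the same face and that the assignment is a bijection. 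Your part (B) inherits the same issue --- if you construct a relative-interior point of $f_\Gamma$ you still must check $\Gamma(f_\Gamma)=\Gamma$ after pruning --- and part (D)'s bijection between floating clusters and bounded planar faces is plausible but rests on the corrected (A)/(B). In short: right skeleton, but the treatment of edges that carry no inequality is a genuine gap, not merely unfinished routine work.
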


\begin{remark}
Note that the GZ polytope $\Delta_\lambda$ depends on $\lambda$, while $\Gamma_n$ only depends on $n$. It reflects the fact that the combinatorial type of $\Delta_\lambda$ depends only on $n$ (not on $\lambda$).
\end{remark}

\begin{example}\label{example_fillingladderd}
When $n = 3$, the ladder diagram $\Gamma_3$ is depicted in the first figure of Figure~\ref{adderdia134}. The diagram will be employed to understand $\mcal{F}\ell(1,3;4)$. There are three subgraphs in Figure~\ref{adderdia134}
\begin{itemize}
\item $\Gamma$ corresponds to $f_{\Gamma}$ determined by $u_{1,2} = u_{1,1} = u_{2,1} = \lambda_2$,
\item $\Gamma^\prime$ corresponds to $f_{\Gamma^\prime}$ determined by $u_{1,3} = u_{1,2} = u_{1,1} = u_{2,1} = u_{3,1} = \lambda_2$,
\item $\Gamma^{\prime \prime}$ corresponds to $f_{\Gamma^{\prime \prime}}$ determined by $u_{1,3} = \lambda_1$.
\end{itemize}

\begin{figure}[ht]
	\scalebox{1}{
\begingroup%
  \makeatletter%
  \providecommand\color[2][]{%
    \errmessage{(Inkscape) Color is used for the text in Inkscape, but the package 'color.sty' is not loaded}%
    \renewcommand\color[2][]{}%
  }%
  \providecommand\transparent[1]{%
    \errmessage{(Inkscape) Transparency is used (non-zero) for the text in Inkscape, but the package 'transparent.sty' is not loaded}%
    \renewcommand\transparent[1]{}%
  }%
  \providecommand\rotatebox[2]{#2}%
  \newcommand*\fsize{\dimexpr\f@size pt\relax}%
  \newcommand*\lineheight[1]{\fontsize{\fsize}{#1\fsize}\selectfont}%
  \ifx\svgwidth\undefined%
    \setlength{\unitlength}{338.31783499bp}%
    \ifx\svgscale\undefined%
      \relax%
    \else%
      \setlength{\unitlength}{\unitlength * \real{\svgscale}}%
    \fi%
  \else%
    \setlength{\unitlength}{\svgwidth}%
  \fi%
  \global\let\svgwidth\undefined%
  \global\let\svgscale\undefined%
  \makeatother%
  \begin{picture}(1,0.16437966)%
    \lineheight{1}%
    \setlength\tabcolsep{0pt}%
    \put(0,0){\includegraphics[width=\unitlength,page=1]{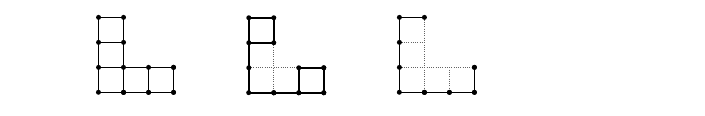}}%
    \put(0.18279045,0.01316307){\color[rgb]{0,0,0}\makebox(0,0)[lt]{\begin{minipage}{0.20099443\unitlength}\raggedright $\Gamma_3$\end{minipage}}}%
    \put(0.39574995,0.01437153){\color[rgb]{0,0,0}\makebox(0,0)[lt]{\begin{minipage}{0.25858924\unitlength}\raggedright $\Gamma$\end{minipage}}}%
    \put(0.6078076,0.0138226){\color[rgb]{0,0,0}\makebox(0,0)[lt]{\begin{minipage}{0.25858924\unitlength}\raggedright $\Gamma^\prime$\end{minipage}}}%
    \put(0,0){\includegraphics[width=\unitlength,page=2]{fig_ladderdia.pdf}}%
    \put(0.83011075,0.01316307){\color[rgb]{0,0,0}\makebox(0,0)[lt]{\begin{minipage}{0.20099443\unitlength}\raggedright $\Gamma^{\prime\prime}$\end{minipage}}}%
    \put(-0.00303085,0.09700819){\color[rgb]{0,0,0}\makebox(0,0)[lt]{\lineheight{1.25}\smash{\begin{tabular}[t]{l}\,\end{tabular}}}}%
  \end{picture}%
\endgroup%
}
	\caption{\label{adderdia134} Ladder diagram $\Gamma_3$ and three subgraphs $\Gamma$, $\Gamma^\prime$, $\Gamma^{\prime\prime}$}	
\end{figure}
\end{example}

\subsection{The Gelfand--Zeitlin system on $\mathcal{F}\ell(1,n;n+1)$}

We review a construction of the Gelfand--Zeitlin (GZ for short) system on $\mcal{F}\ell(1, n; n+1)$. To construct the GZ system on $\mcal{F}\ell(1, n; n+1)$, we first realize $\mcal{F}\ell(1,n;n+1)$ as a coadjoint orbit. Let $G \coloneqq \mathrm{U}(n+1)$ be the unitary group and let $G$ act on the linear dual $\frak{g}^*$ of its Lie algebra $\frak{g}$ via the coadjoint representation. Let 
\begin{itemize}
\item $\frak{t}$ be the Cartan subalgebra of $\frak{g}$ consisting of diagonal matrices in $G$. 
\item $E_{i,j}$ be the matrix whose $(i,j)$ entry is one and the other entries are all zero,
\item $\epsilon_j$ be the linear map  on $\frak{t}$ by the assignment $\epsilon_j (E_{ii}) = \delta_{ij}$, 
\item $\frak{t}^*$ be the linear dual of the Cartan subalgebra $\frak{t}$,
\item $\varpi_i \coloneqq \epsilon_1 + \cdots + \epsilon_i$ be the $i$-th fundamental weight.
\end{itemize}
The fundamental Weyl chamber is then given by
$$
\frak{t}^*_+ \coloneqq \bigoplus_{i=1}^{n+1} \R_{\geq 0} \varpi_i.
$$

Reflecting our choice of $\lambda$ in~\eqref{equ_monotonechoicegen}, we choose an integral dominant weight 
\begin{equation}\label{equ_lambdachoice}
\lambda = (\lambda_1 - \lambda_2) \cdot \varpi_1 + (\lambda_2 - \lambda_{3}) \cdot \varpi_n  \in \frak{t}^* \subset \frak{g}^*
\end{equation}
and the coadjoint orbit is defined by the orbit of $\lambda$ under the coadjoint $G$-action. We then have
\begin{equation}\label{equ_iso}
\mcal{O}_\lambda \simeq \frac{\mathrm{U}(n+1)}{\mathrm{U}(1) \times \mathrm{U}(n)} \simeq \frac{\mathrm{SL}(n+1,\C)}{P} \simeq \mcal{F}\ell(1,n;n+1). 
\end{equation}
The coadjoint orbit $\mcal{O}_\lambda$ naturally comes with an invariant symplectic form $\omega_\lambda$, which is called the Kirillov--Kostant--Souriau form. In fact, two forms agree via the isomorphism~\eqref{equ_iso} by our choices~\eqref{equ_multipleprodfsfs} and~\eqref{equ_lambdachoice}, see \cite[Section 2. Eq (2)]{NishinouNoharaUeda} for instance.

Now, we build a completely integrable system on $\mcal{O}_\lambda$ by applying Thimm's method.
Let $G_j \coloneqq \mathrm{U}(j)$ and $\iota_j \colon G_{j} \to G_{j+1}$ be the inclusion of $G_j$ into the leading principal submatrix, that is, $\iota_j (A) \mapsto \mathrm{diag}(A, 1)$. Let
\begin{itemize}
\item $\frak{g}^*_j$ be the dual of the Lie algebra of $G_j$,
\item $\frak{t}^*_j$ be the dual of the Cartan subalgebra of $\frak{g}_j$,
\item $\frak{t}^*_{j,+}$ be its fundamental Weyl chamber.
\end{itemize}
The coadjoint $G_j$-orbit of any $\lambda \in \frak{g}_j^*$ and the fundamental Weyl chamber $\frak{t}^*_{j,+}$  intersect at a \emph{single} point so that we have a well-defined map
$$
\Pi_j \colon \frak{g}^*_{j} \to \frak{t}^*_{j,+} \quad \lambda \mapsto (G_j \cdot \lambda) \cap  \frak{t}^*_{j,+}.
$$
We then have the following diagram
\begin{equation}\label{equ_diagram}
\xymatrix{
\mcal{O}_\lambda  \ar@{^{(}->}[r]^{\iota}  & \frak{g}^* \ar@(r,u)_{G} \ar@{->}[rr]^{\iota^*_n} &  & \frak{g}^*_{n}  \ar@(r,u)_{G_{n}} \ar[d]^{\Pi_{n}}  \ar@{->}[rr]^{\iota^*_{n-1}} && \frak{g}^*_{n-1} \ar[d]^{\Pi_{n-1}} \ar@(r,u)_{G_{n-1}}  \ar@{->}[rr] & & \cdots  \ar@{->}[rr]^{\iota^*_{1}} & & \frak{g}^*_1 \ar[d]^{\Pi_{1}} \ar@(r,u)_{G_{1}} \\
& && \frak{t}^*_{n} && \frak{t}^*_{n-1} && \cdots && \frak{t}^*_{1}.
}
\end{equation}
The first map $\iota \colon \mcal{O}_\lambda \to \frak{g}^*$ is the inclusion, which is a moment map for the coadjoint $G$-action on $\mcal{O}_\lambda$. By the functoriality of moment maps, the action of the subgroup $G_j$ of $G$ via the composition $\iota_n \circ \cdots \circ \iota_j$ has a moment map 
$$
\mcal{O}_\lambda \to \frak{g}_{j}^* \quad \mbox{given by $\iota_j^* \circ \cdots \circ \iota_n^* \circ \iota$}.
$$  
For each $j$ with $1 \leq j \leq n$, consider a map $\tau_j \colon \frak{t}^*_j \to \R^{j}$ defined by $\epsilon \mapsto \left( \epsilon(E_{ii}) \mid 1 \leq i \leq j \right)$. Set
$$
\Phi_j \colon \mcal{O}_\lambda \to \frak{g}^*_{j} \to \frak{t}^*_j \to \R^{j}, \quad \Phi_j = \tau_j \circ \Pi_j \circ \iota_j^* \circ \cdots \circ \iota_n^* \circ \iota.
$$
By collecting all $\Phi_j$, we then obtain 
$$
(\Phi_1, \cdots, \Phi_{n}) \colon \mcal{O}_\lambda \to \R^{n(n+1)/2}.
$$
By our choice of ~\eqref{equ_lambdachoice} and the interlacing inequalities arising from the min-max principle, each component $\frak{t}^*_j \to \R$ given by $\epsilon \mapsto \epsilon(E_{ii})$ for $1 < i < j$ of $\Phi_j$ restricting to $\mcal{O}_\lambda$ is a constant function. Set $\tau_j \coloneqq (\tau_{j,1}, \cdots, \tau_{j,j})$ where
$$
\tau_{j,i} \colon \frak{t}^*_j \to \R \quad \mbox{is given by $\epsilon \mapsto \epsilon (E_{ii})$}. 
$$
The non-constant components of $\Phi_j$ on $\mcal{O}_\lambda$ are denoted by
$$
\begin{cases}
\Phi_{1,j} \coloneqq \tau_{j,1} \circ \Pi_j \circ \iota_j^* \circ \cdots \circ \iota_n^* \circ \iota, \\
\Phi_{j, 1} \coloneqq \tau_{j,j} \circ \Pi_j \circ \iota_j^* \circ \cdots \circ \iota_n^* \circ \iota.
\end{cases}
$$

\begin{definition}
The \emph{Gelfand--Zeitlin system} is defined by
\begin{equation}\label{equ_GZsystemcomp}
\Phi_\lambda \coloneqq (\Phi_{1,1}, (\Phi_{1,2}, \Phi_{2,1}), \cdots, (\Phi_{1,n}, \Phi_{n,1})) \colon \mcal{O}_\lambda \to \R^{2n-1}.
\end{equation}
\end{definition}

\begin{remark}\label{remark_trans}
By interlacing inequalities, the image of the GZ system $\Phi_\lambda$ is a polytope, which is equal to the GZ polytope $\Delta_\lambda$ defined in~\eqref{equation_GZ-pattern} up to a translation of each component by $\lambda_2$. From now on, the image of the GZ system $\Phi_\lambda$ is assumed to be equal to $\Delta_\lambda$.
\end{remark}

Note that the coadjoint orbit $\mcal{O}_\lambda$ is isomorphic to $\mcal{X}_1$ as in~\eqref{equ_toricdegenerations}. Nishinou--Nohara--Ueda constructed a toric degeneration of completely integrable systems such that the GZ system in~\eqref{equ_GZsystemcomp} converges to the toric completely integrable system~\eqref{equ_coorGC}. Restricting to the case where $\mcal{F}\ell(1,n;n+1)$, their theorem is stated below.

\begin{theorem}[\cite{NishinouNoharaUeda}]\label{theorem_toricdegenerations}
Consider the Gelfand--Zeitlin system $\Phi_\lambda \colon \mcal{O}_\lambda \simeq \mcal{X}_1 \to \R^{2n-1}$ in~\eqref{equ_GZsystemcomp} and the toric moment map $\Phi_0 \colon \mcal{X}_0 \to \R^{2n-1}$ in~\eqref{equ_coorGC}. Let $\Delta_\lambda$ be the Gelfand--Zeitlin polytope in~\eqref{equation_GZ-pattern}. Then, for each $t \in [0,1]$, we have 
\begin{itemize}
\item a completely integrable system $\Phi_t \colon \mcal{X}_t \to \R^{2n-1}$ and
\item a continuous map $\psi_{1,t} \colon \mcal{X}_1 \to \mcal{X}_t$
\end{itemize}
such that
\begin{enumerate}
\item the image of $\Phi_t \colon \mcal{X}_t \to \R^{2n-1}$ is $\Delta_\lambda$ for each $t \in [0,1]$,
\item $\Phi_1$ agrees with $\Phi_\lambda$ via the isomorphism $\mcal{O}_\lambda \simeq \mcal{X}_1$,
\item $\Phi_0 \colon \mcal{X}_0 \to \R^{2n-1}$ is the toric moment map,
\item $\psi_{1,t}$ is a symplectomorphism for each nonzero $t (\neq 0)$,
\item $\psi_{1,0}$ is a symplectomorphism on the inverse image of $\mcal{X}_0 \backslash \mathrm{Sing}(\mcal{X}_0)$ under $\psi_{1,0}$. 
\item $\psi_{1,t}$ makes the following diagram commute$\colon$
\begin{equation}\label{equ_commutingdiagramgrad}
\xymatrix{
{\mcal{X}_1 \simeq \mcal{O}_\lambda} \ar[rr]^{\psi_{1,t}} \ar[rd]_{\Phi_1} & & \mcal{X}_t \ar[ld]^{\Phi_t}  \\
& \Delta_\lambda. &  }
\end{equation}
\end{enumerate}
\end{theorem}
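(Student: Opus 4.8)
This is the specialization to $\mcal{F}\ell(1,n;n+1)$ of the toric-degeneration theorem of Nishinou--Nohara--Ueda \cite{NishinouNoharaUeda}, and the plan is to reconstruct their gradient--Hamiltonian flow argument for the explicit family $\Pi\colon\mcal{X}\to\C$ of~\eqref{equ_toricdegenerations}. First I would equip $\CP^n\times\CP^n\times\C$ with the Kähler form $\Omega:=(\lambda_1-\lambda_2)\,\omega_{\mathrm{FS}}\oplus(\lambda_2-\lambda_3)\,\omega_{\mathrm{FS}}\oplus\omega_{\C}$ and restrict the associated metric to the smooth locus of $\mcal{X}$. Writing $\Pi=u+\sqrt{-1}\,v$ and letting $V$ be the Hamiltonian vector field of $v$, holomorphicity of $\Pi$ gives $V=\operatorname{grad}(u)$, so the reparametrized field $W:=-\operatorname{grad}(u)/|\operatorname{grad}(u)|^2$ satisfies $du(W)\equiv-1$ and $dv(W)\equiv0$; its flow decreases $u$ at unit rate while staying in $\{v=0\}$. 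Since the fibers $\mcal{X}_t$ with $t\ne0$ are smooth and $\mcal{X}$ is compact, this flow is complete on each slab $\{\epsilon\le u\le1\}$, hence carries $\mcal{X}_1=\Pi^{-1}(1)$ onto $\mcal{X}_t=\Pi^{-1}(t)$ in time $1-t$ for every $t\in(0,1]$; call this map, composed with the inclusion, $\psi_{1,t}\colon\mcal{X}_1\to\mcal{X}_t$, so that $\psi_{1,1}=\mathrm{id}$.

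Next I would verify the symplectomorphism property. Since $\iota_W\Omega=dv/|\operatorname{grad}(u)|^2$ and $dv$ pulls back to zero on every fiber $\mcal{X}_t=\{u=t,\ v=0\}$, Cartan's formula gives $(\mathcal{L}_W\Omega)|_{\mcal{X}_t}=0$ for all $t$, and integrating along the flow yields $\psi_{1,t}^*\bigl(\Omega|_{\mcal{X}_t}\bigr)=\Omega|_{\mcal{X}_1}$; this is~(4). Each $\mcal{X}_t$ with $t\ne0$ is the smooth bidegree-$(1,1)$ hypersurface of~\eqref{equ_toricdegenerations}, projectively equivalent to $\mcal{X}_1\cong\mcal{O}_\lambda$. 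Defining $\Phi_t:=\Phi_\lambda\circ\psi_{1,t}^{-1}\colon\mcal{X}_t\to\R^{2n-1}$ for $t\in(0,1]$, it is a completely integrable system with moment image $\Delta_\lambda$ because $\psi_{1,t}$ is a symplectomorphism and $\Phi_\lambda=\Phi_1$ is one by Remark~\ref{remark_trans}, and~\eqref{equ_commutingdiagramgrad} commutes by construction; this gives~(1),~(2), and~(6) for $t\ne0$.

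The heart of the argument is the passage to $t=0$. Let $Z\subset\mcal{X}_1$ be the set of points whose $W$-flow line limits onto $\mathrm{Sing}(\mcal{X}_0)=\{p_1=p_2=p_{\underline{1}}=p_{\underline{2}}=0\}$; being the preimage of a codimension-three subvariety, $Z$ is closed and of measure zero. On $\mcal{X}_1\setminus Z$ the flow extends continuously to $t=0$, defining a map $\psi_{1,0}\colon\mcal{X}_1\setminus Z\to\mcal{X}_0\setminus\mathrm{Sing}(\mcal{X}_0)$ that is a symplectomorphism onto its image by letting $t\to0$ in the previous step; this is~(5). To extend $\psi_{1,0}$ continuously over $Z$ I would use the explicit local model of $\mcal{X}$ near $\mathrm{Sing}(\mcal{X}_0)$ coming from $p_1p_{\underline{1}}-p_2p_{\underline{2}}=0$ — a conifold-type equation $xy=zw$ in the relevant affine chart — to check that flow lines approaching $Z$ converge in $\mcal{X}_0$ (necessarily into $\mathrm{Sing}(\mcal{X}_0)$); equivalently one runs the gradient--Hamiltonian flow on a resolution $\widetilde{\mcal{X}}\to\mcal{X}$ and pushes forward. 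This produces the continuous map $\psi_{1,0}\colon\mcal{X}_1\to\mcal{X}_0$. It then remains to identify the limit system: the components of $\Phi_\lambda$ in~\eqref{equ_GZsystemcomp} are eigenvalues of leading principal submatrices of the Hermitian matrix representing a point of $\mcal{O}_\lambda\cong\mcal{X}_1$, and transporting these along the flow and rewriting them in Plücker coordinates, they converge, uniformly on compact subsets of $\mcal{X}_1\setminus Z$, to the coordinate functions $u_{i,j}$ of the toric moment map~\eqref{equ_momentmap} as $t\to0$; this is the eigenvalue-degeneration estimate of \cite{NishinouNoharaUeda} in our case. Hence $t\mapsto\Phi_t$ extends continuously with $\lim_{t\to0}\Phi_t=\Phi_0$, which is~(3), and continuity in $t$ together with density of $\mcal{X}_1\setminus Z$ upgrades~\eqref{equ_commutingdiagramgrad} to all of $[0,1]$, finishing~(6).

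The main obstacle is precisely the analysis near $\mathrm{Sing}(\mcal{X}_0)=\{p_1=p_2=p_{\underline{1}}=p_{\underline{2}}=0\}$: away from this stratum the whole construction is soft (a gradient flow on a compact Kähler manifold plus Cartan's formula), but there one must push the gradient--Hamiltonian flow and the eigenvalue functions through the conifold singularity, showing both that the flow extends continuously and that the limiting integrable system is genuinely the toric moment map~\eqref{equ_momentmap}. This is the stratum that will carry the non-toric Gelfand--Zeitlin fiber, so it is the delicate point here as well as later in the paper.
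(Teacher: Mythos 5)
The paper offers no proof of this theorem---it is quoted from Nishinou--Nohara--Ueda \cite{NishinouNoharaUeda}---and your outline (Ruan's gradient--Hamiltonian flow for the one-parameter family \eqref{equ_toricdegenerations}, the Cartan-formula argument for the symplectomorphism property on the smooth fibers, and the limiting analysis near $\mathrm{Sing}(\mathcal{X}_0)$) is exactly the strategy of that cited proof, so you are following essentially the same route as the paper's source. Note only that the two steps you defer---the continuous extension of $\psi_{1,0}$ across the conifold locus and the identification of the transported Gelfand--Zeitlin functions with the toric moment map \eqref{equ_coorGC}---are precisely the substantive content of \cite{NishinouNoharaUeda}, so as written your argument invokes the theorem's own source for its hardest points rather than proving them, which is consistent with (and no weaker than) the paper's treatment by citation.
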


\subsection{Topology of Gelfand--Zeitlin fibers in $\mathcal{F}\ell(1,n;n+1)$}

We now discuss the topology of fibers of the Gelfand--Zeitlin system $\Phi_\lambda$ constructed in~\eqref{equ_GZsystemcomp}. The current author with Cho and Oh in \cite{ChoKimOhLag} described each GZ fiber in terms of the total space of a certain iterated bundle. As a byproduct, every GZ fiber is shown to be a smooth isotropic submanifold. We recall their result when $\mcal{O}_\lambda \simeq \mathcal{F}\ell(1,n,n+1)$.

In this case where $\mcal{O}_\lambda \simeq \mathcal{F}\ell(1,n;n+1)$, if a point $\mathbf{u}$ in the relative interior $\mathring{f}$ of a face $f$, then the fiber is of the product form $\Phi_{\lambda}^{-1}(\mathbf{u}) \simeq \mcal{S}_f \times (S^1)^{\dim f}$ where $\mcal{S}_f$ is either a point or an odd dimensional sphere. The factor $\mcal{S}_f$ is precisely determined as follows. Set $u_{1, n+1} \coloneqq \lambda_1$ and $u_{n+1, 1} \coloneqq \lambda_{n+1}$. For an integer $j$ with $1 \leq j \leq n-1$, the condition $(j)$ is defined by
\begin{equation}\label{equ_conditionsphericalfact}
\mbox{(Condition $(j)$)} \coloneqq
\begin{cases}
u_{1,j+1}  = \cdots = u_{1,2} = u_{1,1} = u_{2,1} = \cdots =  u_{j+1,1} = \lambda_2, \\
u_{1, j+2} > u_{1, j+1}$ and $u_{j+1, 1} > u_{j+2, 1}
\end{cases}
\end{equation}
We define
\begin{equation}\label{equ_topologyofsf}
\mcal{S}_f \coloneqq
\begin{cases}
S^{2j+1} &\mbox{ if a point $\mathbf{u} \in \mathring{f}$ satisfies the condition $(j)$,} \\
\mathrm{point} &\mbox{ otherwise.} 
\end{cases}
\end{equation}

A face $f$ of $\Delta_\lambda$ is called \emph{Lagrangian} if the GZ fiber $\Phi_\lambda^{-1}(\mathbf{u})$ is Lagrangian for some point $\mathbf{u}$ (and hence each point) in the relative interior $\mathring{f}$. In $\mcal{O}_\lambda \simeq \mathcal{F}\ell(1,n;n+1)$, there are $n$ Lagrangian faces consisting of the improper face $f_0 \coloneqq \Delta_\lambda$ and $(n-1)$ Lagrangian proper faces $f_1, f_2, \cdots, f_{n-1}$ where $f_j$ is defined by
\begin{equation}\label{equ_fjface}
f_j \coloneqq \left\{ \mathbf{u} \in \Delta_\lambda \mid \lambda_2 = u_{1,j+1} = u_{1,j} = \cdots = u_{1,2} = u_{1,1} = u_{2,1} = \cdots = u_{j,1} =  u_{j+1,1}  \right\} . 
\end{equation}
If $\mathbf{u}$ is in $\mathring{f}_j$, then $\Phi_{\lambda}^{-1}(\mathbf{u})$ is a Lagrangian submanifold diffeomorphic to 
${S}^{2j + 1} \times T^{\dim f}$ where $T^0$ is assumed to be a point.

\begin{theorem}\label{CKO_topologyoffiber}
Each Gelfand--Zeitlin fiber $\Phi_{\lambda}^{-1}(\mathbf{u})$ is an isotropic submanifold diffeomorphic to 
\begin{equation}\label{equ_producttypesf}
\Phi_{\lambda}^{-1}(\mathbf{u}) \simeq \mcal{S}_f \times (S^1)^{\dim f}
\end{equation}
where $\mcal{S}_f$ is determined by~\eqref{equ_topologyofsf}.
\end{theorem}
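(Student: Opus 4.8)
The plan is to realize $\Phi_\lambda^{-1}(\mathbf{u})$ as an explicit \emph{isospectral} set of Hermitian matrices and to decompose it as an iterated bundle. Identify $\frak{g}^*$ with $\mathrm{Herm}(n+1)$ so that each $\iota_k^*$ becomes ``take the leading $k\times k$ principal block'' and each $\Pi_k$ becomes ``list the eigenvalues in non-increasing order''; then the component $\Phi_{1,k}$ (resp.\ $\Phi_{k,1}$) of $\Phi_\lambda$ at $A\in\mcal{O}_\lambda$ is the largest (resp.\ smallest) eigenvalue of the leading block $A_{[k]}$, while the interlacing/min--max inequalities force the remaining eigenvalues of $A_{[k]}$ to equal $\lambda_2$. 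Writing $\mathbf{u}^{(k)}:=(u_{1,k},\lambda_2,\dots,\lambda_2,u_{k,1})\in\R^k$ for $1\le k\le n$ (with $\lambda_2$ repeated $k-2$ times) and $\mathbf{u}^{(n+1)}:=\lambda$, one gets
\[
\Phi_\lambda^{-1}(\mathbf{u})=\{\,A\in\mcal{O}_\lambda:\operatorname{spec}(A_{[k]})=\mathbf{u}^{(k)}\text{ for }1\le k\le n\,\}=:\mcal{G}_{n+1},
\]
and, more generally, $\mcal{G}_m:=\{\,B\in\mathrm{Herm}(m):\operatorname{spec}(B_{[l]})=\mathbf{u}^{(l)},\ 1\le l\le m\,\}$ forms a tower $\mcal{G}_{n+1}\to\mcal{G}_n\to\cdots\to\mcal{G}_2\to\mcal{G}_1=\{\mathrm{pt}\}$ under $B\mapsto B_{[m-1]}$. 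The first task is to prove that each $q_m\colon\mcal{G}_m\to\mcal{G}_{m-1}$ is a locally trivial fibration: it is equivariant for the conjugation action of $\mathrm{U}(m-1)$, and, stratifying $\mcal{G}_{m-1}$ by $\mathrm{U}(m-1)$-orbit type, one checks that the fibers vary smoothly; this exhibits $\Phi_\lambda^{-1}(\mathbf{u})$ as an embedded smooth submanifold equal to the total space of the tower.

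Next I would compute the fiber of $q_m$. Over $B'\in\mcal{G}_{m-1}$, which after conjugation by $\mathrm{U}(m-1)$ may be taken to be $\operatorname{diag}(\mathbf{u}^{(m-1)})$, an element of $q_m^{-1}(B')$ is a bordered Hermitian matrix $\left(\begin{smallmatrix}\operatorname{diag}(\mathbf{u}^{(m-1)})&w\\w^{*}&d\end{smallmatrix}\right)$ with $w\in\C^{m-1}$, $d\in\R$, and spectrum $\mathbf{u}^{(m)}$. Expanding the characteristic polynomial of a bordered matrix pins down $d$ and shows: a coordinate $w_i$ must vanish whenever the $i$-th entry of $\mathbf{u}^{(m-1)}$ coincides with an entry of $\mathbf{u}^{(m)}$; a strict interlacing at the $i$-th slot leaves $w_i$ free on a circle of positive radius; and when $\mathbf{u}^{(m-1)}$ carries $\lambda_2$ with multiplicity $r\ge2$ while $\mathbf{u}^{(m)}$ carries it with multiplicity $r-1$ and has strict gaps on both sides of that block, the $r$ coordinates of $w$ in the $\lambda_2$-eigenblock are constrained only by a single positive-radius equation and sweep out $S^{2r-1}$. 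Because $\lambda$ has only the two eigenvalues $\lambda_1,\lambda_3$ besides $\lambda_2$, every $\mathbf{u}^{(k)}$ has at most two entries different from $\lambda_2$, and a short bookkeeping of the $\lambda_2$-multiplicities down the tower (they change by at most one between consecutive levels and equal $m$ exactly when $\mcal{G}_m$ is a point) then shows: the sphere case occurs at \emph{at most one} step, namely $m=j+2$ precisely when Condition $(j)$ of~\eqref{equ_conditionsphericalfact} holds, and there the base $\mcal{G}_{j+1}$ is a point; every other step has a point or a torus fiber. Hence $\mcal{G}_{n+1}$ is built from one factor $S^{2j+1}$ (or none, giving $\mcal{S}_f=\mathrm{pt}$) together with a number of circle factors.

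It remains to see that the tower is a \emph{trivial} product and to count the circles. Each component of $\Phi_\lambda$ generates a Hamiltonian circle action on the dense open subset of $\mcal{O}_\lambda$ where the corresponding eigenvalue is simple; at a generic $\mathbf{u}\in\mathring{f}$ exactly $\dim f$ of these commuting actions are defined near $\Phi_\lambda^{-1}(\mathbf{u})$, they preserve the joint level sets, and in the bordered-matrix model each rotates a nonvanishing $w$-coordinate, so the resulting $T^{\dim f}$-action on $\Phi_\lambda^{-1}(\mathbf{u})$ is free, with orbit space the sphere (or point) surviving the tower. The principal $T^{\dim f}$-bundle $\Phi_\lambda^{-1}(\mathbf{u})\to\mcal{S}_f$ is then trivial since $H^2(S^{2j+1})=0$ for $j\ge1$ (and trivially over a point), giving $\Phi_\lambda^{-1}(\mathbf{u})\cong\mcal{S}_f\times(S^1)^{\dim f}$; that the number of circle factors is indeed $\dim f$ can be read off from the ladder-diagram count of Theorem~\ref{theorem_AnChoKim}, or, for faces off the singular locus, from the toric degeneration of Theorem~\ref{theorem_toricdegenerations}. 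Isotropy of $\Phi_\lambda^{-1}(\mathbf{u})$ — the other delicate point — follows from the Poisson commutativity of the components of $\Phi_\lambda$ via symplectic reduction at each level, as in the classical treatment of Gelfand--Zeitlin systems. The main obstacle is the fibration claim in the first step: verifying local triviality of each $q_m$ across the strata of $\mcal{G}_{m-1}$ where $\mathbf{u}^{(m-1)}$ has repeated eigenvalues, where the normalization ``diagonalize $B'$'' is not canonical and the coordinates $(w,d)$ degenerate, so the equivariant-stratification argument rather than a single computation is what carries it through.
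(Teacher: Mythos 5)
Your argument is correct but takes a genuinely different route from the paper's. The paper's proof is a two-line sketch: it invokes \cite[Theorem B]{ChoKimOhLag} as a black box for the splitting $\Phi_\lambda^{-1}(\mathbf{u})\simeq\mcal{S}\times(S^1)^{\dim f}$ with $\mcal{S}$ the total space of an iterated bundle, and then pins $\mcal{S}$ down combinatorially using the ladder diagram: since $\Gamma_n$ has width one, only a single $L$-block can be inserted, and only at the corner at the origin, so the iterated bundle has exactly one nontrivial stage, giving $\mcal{S}=\mcal{S}_f$. You instead re-derive the iterated-bundle structure from scratch via the isospectral tower $\mcal{G}_{n+1}\to\cdots\to\mcal{G}_1$ of bordered Hermitian matrices, classify the stage fibers as points, circles, or $S^{2r-1}$ by expanding the characteristic polynomial, track $\lambda_2$-multiplicities down the tower to show the sphere stage occurs at most once and exactly when Condition $(j)$ holds (this is precisely where the special form of $\lambda$ — only two entries other than $\lambda_2$ — enters, mirroring the width-one observation in the sketch), and then trivialize the tower by freeness of the $T^{\dim f}$-action combined with $H^2(S^{2j+1};\Z)=0$. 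The trade-off: the paper's route is short and delegates the analytic input; yours is self-contained and makes the appearance of the odd sphere completely explicit, at the cost of re-proving the fibration and isotropy facts that the paper outsources. You rightly flag the local triviality of $q_m$ across strata with repeated eigenvalues as the delicate point, and the equivariant-diagonalization argument should carry it through since the multiplicity pattern is constant along each $\mcal{G}_{m-1}$. The other place deserving equal care is the isotropy claim at the singular fibers: the ``classical treatment'' gives Lagrangian level sets only on the open dense stratum where the Thimm torus acts freely, so isotropy of fibers carrying the $S^{2j+1}$-factor needs a separate tangent-space computation in the bordered-matrix model — which is part of what \cite[Theorem B]{ChoKimOhLag} supplies and should not be waved off as automatic from Poisson commutativity.
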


\begin{proof}[Sketch of Proof]
By \cite[Theorem B]{ChoKimOhLag}, the torus of $\Phi_\lambda^{-1}(\mathbf{u})$ can be factored out so that
$$
\Phi_\lambda^{-1}(\mathbf{u}) \simeq \mcal{S} \times (S^1)^{\dim f}
$$
where $\mathcal{S}$ is the total space of an iterated bundle. Since the width of the ladder diagram $\Gamma_n$ is one, \emph{at most one} $L$-block consisting of multiple unit blocks can be inserted in $\Gamma \in \scr{P}(\Gamma_n)$ corresponding to $f$ via the map $\Psi$ in Theorem~\ref{theorem_AnChoKim}. Moreover, the only possible position of inserting such $L$-blocks is around the corner at the origin. It implies that the iterated bundle for $\mathcal{S}$ consists of a \emph{single} non-trivial stage, which is exactly $\mcal{S}_f$.
\end{proof}

\section{Main results}

The aim of this section is to present the main results of this paper. 

From now on, we take our choice $\lambda = (\lambda_1, \lambda_2, \lambda_3)$ in~\eqref{equ_lambdachoice} as  
\begin{equation}\label{equ_lambda123}
\lambda_1 = n(n-1), \lambda_2 = 0, \lambda_3 = - n(n-1)
\end{equation}
for the coadjoint orbit $\mcal{O}_\lambda \simeq \mathcal{F}\ell (1,n;n+1)$. Recall that this choice determines the K\"{a}hler form and the Gelfand--Zeitlin polytope and system as in~\eqref{equ_multipleprodfsfs},~\eqref{equation_GZ-pattern}, and~\eqref{equ_GZsystemcomp} accordingly. In particular, the KKS form $\omega_\lambda$ becomes \emph{monotone}, that is,  
$$
c_1 (T \mcal{O}_\lambda) = \nu \cdot [\omega_\lambda] \quad \mbox{ in ${H}_2 (\mcal{O}_\lambda)$ for some $\nu > 0$.}
$$

Recall that a Lagrangian submanifold $L$ of a symplectic manifold $(X, \omega)$ is called \emph{monotone} if for each relative homotopy class $\beta \in \pi_2(X, L)$, the Maslov index of $\beta$ and the symplectic area of $\beta$ are positively proportional. The monotone symplectic manifold $\mcal{O}_\lambda$ carries monotone GZ Lagrangian fibers. Indeed, each Lagrangian face of $\Delta_\lambda$ has a (unique) point in its relative interior whose GZ fiber is monotone by \cite[Theorem B]{ChoKimMONO}. In this circumstance, the classification result leads to the following theorem.

\begin{theorem}[\cite{ChoKimMONO}]\label{theroem_classificationmonotone}
For each $j = 0, 1, 2, \cdots, n-1$, let $\mathbf{u}$ be in the relative interior of the Lagrangian face $f_j$ in~\eqref{equ_fjface}. Then the Gelfand--Zeitlin fiber at $\mathbf{u}$ is monotone if and only if $\mathbf{u}$ is located at
$$
\begin{cases}
u_{1,k} = - u_{k,1} = 0 &\mbox{ for $k = 1, \cdots, j+1$} \\
u_{1,k} = - u_{k,1} = (n - 1)(k - 1) &\mbox{ for $k = j+2, \cdots, n$}.
\end{cases}
$$
\end{theorem}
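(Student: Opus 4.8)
The plan is to deduce the monotonicity criterion from a general computation of the disk potential function, or equivalently of the Maslov-$2$ holomorphic disks, attached to the Gelfand--Zeitlin Lagrangian fibers of $\mcal{O}_\lambda$. The starting observation is that by Theorem~\ref{CKO_topologyoffiber} the fiber over a point $\mathbf{u} \in \mathring{f}_j$ is diffeomorphic to $S^{2j+1} \times T^{\dim f_j}$, so monotonicity is a condition on $\pi_2(\mcal{O}_\lambda, \Phi_\lambda^{-1}(\mathbf{u}))$. The natural source of relative classes is the toric degeneration of Theorem~\ref{theorem_toricdegenerations}: via $\psi_{1,t}$ the fiber $\Phi_\lambda^{-1}(\mathbf{u})$ is identified (away from $\mathrm{Sing}(\mcal{X}_0)$) with a torus orbit in the central toric fiber $\mcal{X}_0$, and under the gradient-Hamiltonian flow the Maslov-$2$ disks of the GZ fiber correspond to the toric disk classes $\beta_F$ dual to the facets $F$ of $\Delta_\lambda$ (together with, for $j \geq 1$, the extra ``spherical'' classes contributed by the non-toric $S^{2j+1}$-factor, whose areas and Maslov indices are governed by the combinatorics of the $L$-block inserted at the corner of $\Gamma_n$). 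Concretely, I would first write down, using the facet description~\eqref{equation_GZ-pattern} of $\Delta_\lambda$ and the moment-map formula~\eqref{equ_momentmap}, the symplectic area of $\beta_F$ as the affine-linear function $\langle \mathbf{u}, v_F\rangle + c_F$ given by the distance of $\mathbf{u}$ to the facet $F$, and record that each such class has Maslov index $2$.

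The second step is to combine these into the monotonicity equation. A Lagrangian fiber $L_\mathbf{u}$ is monotone precisely when all Maslov-$2$ disk classes have a common symplectic area $\tau > 0$; since the classes $\beta_F$ span $H_2(\mcal{O}_\lambda, L_\mathbf{u}; \Z)$ modulo the boundary torus, this forces $\mathbf{u}$ to be equidistant from all Lagrangian facets of $\Delta_\lambda$ through $\mathbf{u}$ — i.e. to be the ``incenter'' of the face $f_j$ in the appropriate weighted sense — and additionally, for $j \geq 1$, the spherical classes coming from $S^{2j+1}$ must also have area $\tau$. Plugging $\mathbf{u} \in \mathring{f}_j$, which already satisfies $u_{1,j+1} = \cdots = u_{1,1} = u_{2,1} = \cdots = u_{j+1,1} = \lambda_2 = 0$ by~\eqref{equ_fjface}, the remaining coordinates $u_{1,k}, u_{k,1}$ for $j+2 \leq k \leq n$ are free, and the equidistance conditions along the two ``arms'' of the GZ pattern become a triangular linear recursion. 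With the normalization $\lambda_1 = n(n-1)$, $\lambda_2 = 0$, $\lambda_3 = -n(n-1)$ from~\eqref{equ_lambda123} this recursion solves to $u_{1,k} = -u_{k,1} = (n-1)(k-1)$, which is exactly the claimed location; the symmetry $u_{1,k} \leftrightarrow -u_{k,1}$ of the pattern under $\lambda_1 \leftrightarrow -\lambda_3$ handles the two arms simultaneously.

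The last step is to certify that no class of Maslov index $0$ or $< 0$ carries a holomorphic disk and that the count is not corrupted by bubbling, so that the naive area-balancing condition is genuinely equivalent to monotonicity of $L_\mathbf{u}$ as a Lagrangian in $(\mcal{O}_\lambda, \omega_\lambda)$; here I would invoke the fact, already used in the excerpt's references \cite{ChoKimMONO, NishinouNoharaUeda}, that on a coadjoint orbit of a full flag type the GZ fibers bound no disks of non-positive Maslov index, together with the semi-Fano / nef-anticanonical property of $\mcal{X}_0$. In fact the cleanest route is simply to quote \cite[Theorem B]{ChoKimMONO}: the monotone fiber on each Lagrangian face exists and is unique, so it suffices to \emph{verify} that the explicit point above satisfies the monotonicity equations and lies in $\mathring{f}_j$ — a finite check — rather than to re-derive uniqueness.

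\medskip

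The main obstacle I anticipate is bookkeeping the spherical disk classes for $j \geq 1$: the non-toric factor $S^{2j+1}$ means $L_\mathbf{u}$ is not a torus and the relevant Maslov-$2$ classes are not all toric facet classes, so one must identify the areas of the disks bounded by the $S^{2j+1}$-factor (these are the classes detected in \cite{ChoKimOhLag, ChoKimOhLGbulk} via the iterated-bundle description) and check they impose no condition beyond the facet-equidistance already imposed — equivalently, that the corner $L$-block of $\Gamma_n$ is ``balanced'' automatically once $u_{1,j+1} = \cdots = u_{j+1,1} = 0$. Everything else is linear algebra on the GZ pattern.
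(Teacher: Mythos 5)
This statement is \emph{cited}, not proved, in the paper: it is imported verbatim as \cite[Theorem B]{ChoKimMONO}, and Kim's paper contains no argument for it. So there is no internal proof to compare against; I can only assess your sketch on its own terms and against what is plausible for the cited reference.

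Your overall strategy (toric degeneration $\psi_{1,t}$, identification of the basic Maslov--$2$ classes with toric facet classes $\beta_F$, equal-area balancing, solve the triangular recursion under the normalization $\lambda=(n(n-1),0,-n(n-1))$) is indeed the expected route and matches what \cite{ChoKimMONO} does. The symmetry observation $u_{1,k}\leftrightarrow -u_{k,1}$ and the resulting reduction to a one-sided recursion are also sound. However, there are three genuine soft spots in the write-up as given.

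First, for $j\ge 1$ you speak of ``spherical disk classes contributed by the $S^{2j+1}$-factor.'' But $S^{2j+1}$ is simply connected for $j\ge 1$, so there are no boundary-detecting disk classes attached to it; the classes relevant to that factor sit in the image of $\pi_2(\mcal{O}_\lambda)\to\pi_2(\mcal{O}_\lambda,L)$. On those, $\omega(\beta)=\tau\,\mu(\beta)$ with $\mu=2c_1$ is forced by ambient monotonicity of $\omega_\lambda$ alone, so it imposes no equidistance condition. Your anticipated ``main obstacle'' is therefore not an obstacle, but the argument should say \emph{why}: one needs the short exact sequence $0\to\pi_2(\mcal{O}_\lambda)\to\pi_2(\mcal{O}_\lambda,L)\to\pi_1(L)\to 0$ and the observation that $\pi_1(L)=\pi_1(T^{\dim f_j})$ is accounted for by the facet classes.

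Second, ``$L$ is monotone iff all Maslov--$2$ disk classes have a common area'' is not the definition. It is an equivalent reformulation only once you have checked (a) that the ambient form is monotone (done, by the choice~\eqref{equ_lambda123}), and (b) that the listed classes generate $\pi_2(\mcal{O}_\lambda,L)$ modulo $\pi_2(\mcal{O}_\lambda)$. For a torus fiber in the interior this is standard; for the non-toric fibers on $f_j$ you are using the trivialization away from $\mathrm{Sing}(\mcal{X}_0)$ and the iterated-bundle description of \cite{ChoKimOhLag}, and that reduction should be made explicit rather than folded into a heuristic.

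Third, your ``cleanest route'' --- quote \cite[Theorem B]{ChoKimMONO} for existence and uniqueness and then merely verify the stated point --- is circular for the purposes of proving \emph{this} theorem, since the statement being proved \emph{is} \cite[Theorem B]{ChoKimMONO}. It is of course fine for how Kim's paper \emph{uses} the result (i.e. by citation), but it is not a proof of it.

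In short: the skeleton is right and consistent with the cited source, but the treatment of the non-toric factor is misidentified (it contributes spherical, not disk, classes), the monotonicity reformulation needs its hypotheses stated, and the final appeal to \cite{ChoKimMONO} cannot be part of a proof of the very same theorem.
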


In particular, for the improper Lagrangian face $f_0$, the monotone Lagrangian GZ torus fiber occurs at the center of $\Delta_\lambda$
$$
u_{1,k} = - u_{k,1} = (n - 1)(k - 1) \mbox{ for $k = 1, \dots, n$}.
$$ 
The center of $\Delta_\lambda$ is denoted by ${\mathbf{u}}_0$. We also choose a point ${\mathbf{u}}_1$ located at
\begin{equation}\label{equ_locationofu1}
u_{1,1} = u_{2,1} = u_{1,2} = 0, u_{1,k} = - u_{k,1} = {n(k-2)} \quad \mbox{for $k = 3, \dots, n$.}
\end{equation}
The point is contained in the relative interior of the Lagrangian face $f_1$. By Theorem~\ref{theroem_classificationmonotone}, the GZ fiber $\Phi^{-1}(\mathbf{u}_1)$ is monotone when $n = 2$ , while the GZ fiber $\Phi^{-1}(\mathbf{u}_1)$ is non-monotone when $n \geq 3$. When $n = 2$, the monotone GZ fiber $\Phi^{-1}(\mathbf{u}_1) \simeq S^3$ was shown to be non-displaceable in \cite{ChoKimOhLGbulk}. The main result in this paper proves non-displaceability of the \emph{non-monotone} and \emph{non-torus} GZ fiber located at $\mathbf{u}_1$ in $\mcal{O}_\lambda$ for $n \geq 3$.

\begin{theorem}\label{theorem_main}
For $n \geq 3$, consider the coadjoint orbit $\mcal{O}_\lambda \simeq \mathcal{F}\ell(1,n;n+1)$ equipped with the \emph{monotone} Kirillov--Kostant--Souriau symplectic form $\omega_\lambda$ from the choice~\eqref{equ_lambda123}. Then the Gelfand--Zeitlin fiber located at $\mathbf{u}_1$ in~\eqref{equ_locationofu1} is a \emph{non-monotone} and \emph{non-displaceable} Lagrangian submanifold diffeomorphic to ${S}^3 \times {T}^{2n-4}$.
\end{theorem}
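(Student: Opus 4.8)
The plan is to realize the non-monotone fiber $\Phi_\lambda^{-1}(\mathbf{u}_1)$ as a limit of a continuum of non-displaceable Lagrangian tori and invoke the closing lemma (Lemma~\ref{lemma_limitnondisplace}) to conclude. Concretely, I would parametrize the line segment $I$ in $\Delta_\lambda$ joining the center $\mathbf{u}_0$ (whose fiber is the monotone torus) to the point $\mathbf{u}_1 \in \mathring{f}_1$ (whose fiber is $S^3 \times T^{2n-4}$), and show that the GZ torus fiber over every interior point of $I$ is non-displaceable. Passing to the limit $\mathbf{u} \to \mathbf{u}_1$ along $I$ then produces non-displaceability of $\Phi_\lambda^{-1}(\mathbf{u}_1)$, and its diffeomorphism type $S^3 \times T^{2n-4}$ is already given by Theorem~\ref{CKO_topologyoffiber} (condition $(1)$ holds at $\mathbf{u}_1$, since $u_{1,2}=u_{1,1}=u_{2,1}=\lambda_2=0$ while $u_{1,3}>u_{1,2}$ and $u_{2,1}>u_{3,1}$, giving $\mcal{S}_{f_1}=S^3$ and $\dim f_1 = 2n-4$). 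That it is non-monotone for $n\ge 3$ follows by comparing~\eqref{equ_locationofu1} with the monotone locus of Theorem~\ref{theroem_classificationmonotone}.

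The heart of the argument is the non-displaceability of the tori over $\mathring{I}$, and here I would use bulk-deformed Lagrangian Floer theory. Via the Nishinou--Nohara--Ueda toric degeneration (Theorem~\ref{theorem_toricdegenerations}), each GZ torus fiber over a point away from $\mathrm{Sing}(\mcal{X}_0)$ corresponds to a toric fiber of $\mcal{X}_0$, and its potential function acquires correction terms from the degeneration. Since $b_2(\mcal{F}\ell(1,n;n+1)) = 2$, I have a two-dimensional space of bulk deformations by ambient codimension-two cycles; the strategy, following \cite{ChoKimOhLGbulk}, is to choose a bulk class $\mathfrak{b}(\mathbf{u})$ depending on the point $\mathbf{u}\in\mathring I$ so that the bulk-deformed potential $\mathfrak{PO}^{\mathfrak{b}(\mathbf{u})}$ has a critical point with nonzero coordinates in the Novikov field. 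The existence of such a critical point forces $HF((L_{\mathbf{u}}, \text{data}), (L_{\mathbf{u}}, \text{data})) \neq 0$, hence non-displaceability by the standard displacement energy / Floer-theoretic obstruction argument. I would write down the leading-order potential from the facets of $\Delta_\lambda$ meeting a neighborhood of $I$ (there are contributions from the hyperplanes $u_{1,2}\ge\lambda_2$, $u_{1,1}\ge u_{2,1}$, and the "outer" facets), add the degeneration correction term associated with the singular stratum $u_{1,1}=u_{1,2}=u_{2,1}=\lambda_2$, and solve the critical point equations, tuning $\mathfrak{b}$ so that a solution persists all along $\mathring{I}$.

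The main obstacle, and where the monotone-case intuition breaks down, is precisely that $I$ connects a monotone fiber to a \emph{non-monotone} one: in \cite{ChoKimOhLGbulk} the segment joined two monotone Lagrangians, so the potential and the winning bulk class could be pinned down at both endpoints and interpolated. Here only one endpoint is monotone, so I must control the valuations of all terms in $\mathfrak{PO}^{\mathfrak{b}(\mathbf{u})}$ as $\mathbf{u}$ slides toward $\mathbf{u}_1$ — in particular, the areas of the relevant holomorphic disks degenerate at different rates, and I must verify that the Maslov-index-two disk classes contributing to the potential, together with the degeneration correction, still admit a balanced critical point rather than the leading terms being overwhelmed. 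This amounts to a careful Newton-polygon / valuation analysis of the critical point equations over the Novikov field, and checking that the correction term from $\mathrm{Sing}(\mcal{X}_0)$ (which is what makes the $S^3$-fiber "visible" to Floer theory) enters at exactly the order needed to balance the equation at $\mathbf{u}_1$.

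A remaining technical point is that $\Phi_\lambda^{-1}(\mathbf{u}_1)$ is not a torus, so the Floer theory producing its non-displaceability cannot be set up directly as toric-fiber Floer theory; this is exactly why I route through Lemma~\ref{lemma_limitnondisplace}, which upgrades non-displaceability of a convergent sequence of Lagrangians to non-displaceability of the limit. I would finish by stating the continuum statement (Theorem~\ref{theorem_GZtorusfibernon}) — the fibers over $\mathring I$ form the claimed one-parameter family — and then specializing to the limit point for Theorem~\ref{theorem_main}.
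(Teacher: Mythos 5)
Your proposal matches the paper's argument essentially point for point: non-displaceability of the torus fibers over the segment joining $\mathbf{u}_0$ and $\mathbf{u}_1$ is proven by bulk-deformed Floer theory with a two-parameter bulk class (computing the potential via the Nishinou--Nohara--Ueda degeneration and solving the critical point equations over $\mathrm{U}(\Lambda)$ via the split leading term equation), and the conclusion for the non-torus fiber follows from Lemma~\ref{lemma_limitnondisplace}, with the diffeomorphism type and non-monotonicity read off from Theorem~\ref{CKO_topologyoffiber} and Theorem~\ref{theroem_classificationmonotone} as you indicate. The one technical step you pass over is that the codimension-two ambient classes supporting the bulk deformation come from toric divisors of the singular central fiber $\mcal{X}_0$ which are themselves singular along the codimension-three locus; the paper devotes Section~\ref{section_pseudocycles} to showing that their smooth loci, transported by $\psi_{1,0}^{-1}$, are genuine pseudocycles in $\mathcal{F}\ell(1,n;n+1)$ --- for the 2-step flag there are no correction terms in the potential itself, so the singularity is handled entirely on the bulk-cycle side rather than as a ``degeneration correction term'' in $W$.
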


To show that $\Phi^{-1}(\mathbf{u}_1)$ is non-displaceable, we now take into account the line segment $I_n(t)$ connecting ${\bf{u}}_0$ and ${\bf{u}}_1$. Namely,  
\begin{equation}\label{equ_Int}
I_n(t) \coloneqq \left\{ {\bf{u}} \in \Delta_\lambda \mid \mathbf{u} = (1-t) \mathbf{u}_0 + t \mathbf{u}_1, \, (0 \leq t \leq 1) \right\}.
\end{equation}
Later on, we will show that the GZ fiber over each point in $I_n(t)$ $(0 < t \leq 1)$ is non-displaceable.

\begin{theorem}\label{theorem_GZtorusfibernon}
For a choice $\lambda$ in~\eqref{equ_lambda123}, the Gelfand--Zeitlin fiber over each point in the line segment $I_n(t)$ $(0 < t \leq 1)$ is non-displaceable. 
\end{theorem}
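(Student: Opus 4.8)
The plan is to prove non-displaceability of every fiber over the open segment $I_n(t)$, $0 < t \le 1$, by the bulk-deformation technique: one constructs a bulk cycle $\mathfrak{b}$ supported on the divisors coming from the two $\mathbb{P}^n$ factors and shows that the $\mathfrak{b}$-deformed Floer cohomology of the relevant GZ fiber is nonzero. Since the second Betti number of $\mcal{O}_\lambda \simeq \mathcal{F}\ell(1,n;n+1)$ is two, the space of such bulk deformations is two-dimensional, which is exactly what gives enough room to slide the nonvanishing locus along a one-parameter family. Throughout I would work on the toric degeneration $\mcal{X}_t$ of Theorem~\ref{theorem_toricdegenerations}: for $t \ne 0$ the map $\psi_{1,t}$ is a symplectomorphism, so it suffices to understand the toric model $\mcal{X}_0$ together with the GZ fibers that survive the degeneration, and then pull back. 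This reduces the computation of the (potentially deformed) Floer-theoretic obstruction to a combinatorial computation with the moment polytope $\Delta_\lambda$ and its facets, organized by the ladder diagram $\Gamma_n$.

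The key steps, in order, are as follows. First, I would record the disk potential function $W$ (and its bulk-deformed version $W^{\mathfrak{b}}$) for the GZ torus fibers over the interior of $\Delta_\lambda$, reading off the leading-order terms from the facets of $\Delta_\lambda$ in~\eqref{equation_GZ-pattern}; by the Nishinou--Nohara--Ueda toric-degeneration result this potential agrees with the toric one on $\mcal{X}_0$ up to corrections that do not affect the leading terms relevant near $I_n(t)$. Second, for a point $\mathbf{u} = (1-t)\mathbf{u}_0 + t\mathbf{u}_1$ I would solve the system $\partial W^{\mathfrak{b}} = 0$ (critical point equations, in the Novikov field) with the two bulk parameters as unknowns; because $\mathbf{u}_0$ is the monotone center the $t=0$ case is the known nonvanishing computation, and I would argue the solution persists for all $t \in (0,1]$, with the bulk parameters varying continuously in $t$. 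Third, at $t=1$ the fiber $\Phi_\lambda^{-1}(\mathbf{u}_1)$ is the non-torus fiber $S^3 \times T^{2n-4}$ lying on the Lagrangian face $f_1$; here I would either compute its bulk-deformed Floer cohomology directly using the description of $\mcal{S}_{f_1} = S^3$ as the nontrivial stage of the iterated bundle from Theorem~\ref{CKO_topologyoffiber}, or — following the Remark after the main theorem and Lemma~\ref{lemma_limitnondisplace} — deduce non-displaceability of $\Phi_\lambda^{-1}(\mathbf{u}_1)$ from that of the nearby tori $\Phi_\lambda^{-1}(I_n(t))$, $t<1$, by a limiting/continuity argument for non-displaceability. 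Finally, I would assemble these to conclude that every fiber over $I_n(t)$, $0 < t \le 1$, is non-displaceable, which is the statement.

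The main obstacle I anticipate is controlling the Floer-theoretic obstruction at the non-monotone torus fibers over the half-open segment with $0 < t < 1$: away from the monotone point $\mathbf{u}_0$ one loses the a priori finiteness/convergence that monotonicity provides, so one must justify that the relevant part of the (bulk-deformed) potential is still a finite sum of the expected leading terms and that higher-order disk contributions — in particular those potentially created by the singular locus $\mathrm{Sing}(\mcal{X}_0)$ sitting in $\{u_{1,1}=u_{1,2}=u_{2,1}=\lambda_2\}$, and by the extra holomorphic disks in $\mcal{X}_1$ not present in the toric model — do not cancel the leading critical point. Concretely, this means verifying that for each $t$ the chosen bulk cycle makes the valuation of $\partial W^{\mathfrak{b}}$ at the candidate critical point strictly larger than the valuations of the terms being balanced, uniformly in $t$ on compact subsets of $(0,1]$. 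The secondary subtlety is the passage to $t=1$: the limiting argument (Lemma~\ref{lemma_limitnondisplace}) needs the Lagrangian tori to degenerate cleanly onto the $S^3\times T^{2n-4}$ fiber, which is exactly guaranteed by the product structure $\Phi_\lambda^{-1}(\mathbf{u}) \simeq \mcal{S}_f \times (S^1)^{\dim f}$ of Theorem~\ref{CKO_topologyoffiber} and the geometry of the face $f_1$, so once the torus computation is in place this step should go through.
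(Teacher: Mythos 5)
Your high-level outline is correct and matches the paper's strategy: the proof is by bulk deformation of Lagrangian Floer theory over the toric degeneration, and non-displaceability is concluded from existence of a critical point of the bulk-deformed potential with coordinates in $\mathrm{U}(\Lambda)$. But two substantive pieces are either missing or differ from what the paper actually does, and the second one is a real gap.

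First, you gloss over what the bulk cycle is. The natural candidate ambient cycles, namely the toric divisors $\scr{D}_0 = V(p_{n+1})$ and $\underline{\scr{D}}_0 = V(p_{\underline{n+1}})$ in $\mcal{X}_0$, are \emph{singular} toric varieties for $n \geq 3$: their singular locus is $\mathrm{Sing}(\mcal{X}_0)\cap \scr{D}_0$. The paper spends all of Section~\ref{section_pseudocycles} constructing pseudocycles $\varphi, \underline{\varphi}$ from the smooth loci $\scr{D}_0 \setminus \mathrm{Sing}(\scr{D}_0)$ (pulled back via $\psi_{1,0}^{-1}$) and verifying, via a careful stratification of the face $g = \{u_{1,1}=u_{1,2}=u_{2,1}=0\}\cap f$ and Mayer--Vietoris arguments, that the boundary $\Omega_\varphi$ has small enough homology for the pseudocycle condition to hold (Proposition~\ref{proposition_pseudocycle}). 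Without this step the bulk parameter $\mathfrak{b} = \mathfrak{c}\cdot\scr{D} + \underline{\mathfrak{c}}\cdot\underline{\scr{D}}$ is not even well-defined as a cycle compatible with the degeneration. Saying ``divisors from the two $\mathbb{P}^n$ factors'' does not address the singularity, and this is where the nontrivial work is.

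Second, and more importantly, your method for producing critical points is not the paper's, and as described it does not obviously work. You propose to take the known critical point at the monotone center ($t=0$, $\mathfrak{b}=0$) and argue it ``persists'' with bulk parameters ``varying continuously in $t$.'' But as $t$ increases from $0$, the two groups of terms in $W_{\mathfrak{b}}(L_n(t),\mathbf{y})$ acquire different Novikov valuations $(n-1)(1-t)$ and $n-1+t$, and the undeformed potential no longer has a critical point -- that is precisely why bulk deformation is needed at all, so there is no continuous path with $\mathfrak{b}$ turning on from zero that one can simply follow. The paper instead solves the problem constructively for each fixed $t$: it decomposes $W_\mathfrak{b}$ into $W_\mathfrak{b}^{\ydiagram{1,2}}$, $W_\mathfrak{b}^{\ydiagram{1,1}}$, $W_\mathfrak{b}^{\ydiagram{2}}$ along the ladder-diagram blocks associated to the Lagrangian face $f_1$, solves the resulting \emph{split leading term equation} over $\C^*$ (Lemma~\ref{lemma_solsplitleadingterm}, giving $y_{1,j}=(-1)^{j-1}$, $y_{j,1}=-1$, $\underline{c}=-1$, $c=(-1)^n$), and then extends this $\C$-solution to a genuine critical point over $\mathrm{U}(\Lambda)$ by an explicit inductive determination of $y_{1,j+1}, y_{j+1,1}$ and finally of $c, \underline{c}$ (proof of Proposition~\ref{proposition_criticalpoints}). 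That inductive lifting is the engine of the argument and needs to be spelled out; a persistence/implicit-function-theorem heuristic over the Novikov field is not a substitute. Your treatment of the $t=1$ endpoint via Lemma~\ref{lemma_limitnondisplace} and the product structure $\mcal{S}_f \times T^{\dim f}$ is correct, and your flagged worry about disks not seen by the toric model is a real issue, but it is resolved by Nishinou--Nohara--Ueda's classification of Maslov-two disk classes, which the paper quotes when justifying~\eqref{equ_potentialfl1nn2}.
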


Theorem~\ref{theorem_GZtorusfibernon} and the following lemma lead to Theorem~\ref{theorem_main} as a corollary because the non-torus fiber $\Phi^{-1}(\mathbf{u}_1)$ can be realized as a ``limit" of non-displaceable torus fibers.

\begin{lemma}[Proposition 2.10 in \cite{ChoKimOhLGbulk}]\label{lemma_limitnondisplace}
Suppose that $\Phi \colon X \to \R^N$ is a proper completely integrable system. If there is a sequence $\{ \mathbf{u}_j \in \Phi(X) \}_{j \in \mathbb{N}}$ of positions such that
\begin{enumerate}
\item each fiber $\Phi^{-1}(\mathbf{u}_j)$ is non-displaceable,
\item $\mathbf{u}_j$ converges to $\mathbf{u}_\infty \in \Phi(X)$,
\end{enumerate}
then the fiber $\Phi^{-1}(\mathbf{u}_\infty)$ is also non-displaceable.
\end{lemma}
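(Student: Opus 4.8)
The plan is to prove the statement by a soft point-set-topological argument: along the fibers of a proper integrable system, displaceability by a \emph{fixed} Hamiltonian diffeomorphism is an open condition on the base, so a limit of non-displaceable fibers cannot be displaceable.

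Suppose for contradiction that $\Phi^{-1}(\mathbf{u}_\infty)$ is displaceable. By definition there is a Hamiltonian diffeomorphism $\phi \in \mathrm{Ham}(X, \omega)$ with $\phi(\Phi^{-1}(\mathbf{u}_\infty)) \cap \Phi^{-1}(\mathbf{u}_\infty) = \emptyset$. I introduce the set
\[
D \coloneqq \left\{ \mathbf{u} \in \Phi(X) \mid \phi(\Phi^{-1}(\mathbf{u})) \cap \Phi^{-1}(\mathbf{u}) = \emptyset \right\}
\]
of positions whose fiber is displaced by this particular $\phi$. By construction $\mathbf{u}_\infty \in D$, whereas $\mathbf{u}_j \notin D$ for every $j \in \N$, since $\Phi^{-1}(\mathbf{u}_j)$ is non-displaceable and hence in particular is not displaced by $\phi$, i.e. $\Phi^{-1}(\mathbf{u}_j) \cap \phi(\Phi^{-1}(\mathbf{u}_j)) \neq \emptyset$. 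The whole argument therefore reduces to showing that $D$ is open in $\Phi(X)$: granting this, the convergence $\mathbf{u}_j \to \mathbf{u}_\infty \in D$ forces $\mathbf{u}_j \in D$ for all large $j$, contradicting $\mathbf{u}_j \notin D$.

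To see that the complement $\Phi(X) \setminus D$ is closed, let $\mathbf{v}_k$ be a sequence in $\Phi(X) \setminus D$ converging to some $\mathbf{v} \in \Phi(X)$; I must produce a point of $\Phi^{-1}(\mathbf{v})$ whose image under $\phi$ again lies in $\Phi^{-1}(\mathbf{v})$. For each $k$ there is $x_k \in \Phi^{-1}(\mathbf{v}_k)$ with $\phi(x_k) \in \Phi^{-1}(\mathbf{v}_k)$, that is, $\Phi(x_k) = \Phi(\phi(x_k)) = \mathbf{v}_k$. Fix a compact neighborhood $K$ of $\mathbf{v}$ in $\R^N$; then $\mathbf{v}_k \in K$ for all large $k$, so $x_k \in \Phi^{-1}(K)$, and $\Phi^{-1}(K)$ is compact because $\Phi$ is proper. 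Passing to a subsequence, $x_k \to x$ for some $x \in X$. Continuity of $\Phi$ gives $\Phi(x) = \lim_k \mathbf{v}_k = \mathbf{v}$, and continuity of $\phi$ together with $\Phi(\phi(x_k)) = \mathbf{v}_k$ gives $\Phi(\phi(x)) = \mathbf{v}$. Hence $x \in \Phi^{-1}(\mathbf{v})$ and $\phi(x) \in \Phi^{-1}(\mathbf{v})$, so $\phi(\Phi^{-1}(\mathbf{v})) \cap \Phi^{-1}(\mathbf{v}) \neq \emptyset$ and $\mathbf{v} \notin D$, as desired. Thus $\Phi(X) \setminus D$ is closed and $D$ is open, completing the proof.

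There is no serious obstacle here; the only place where the hypotheses are used in an essential way is the compactness extraction $x_k \to x$, which is precisely where properness of $\Phi$ enters (so that $\Phi^{-1}(K)$ is compact), with continuity of $\Phi$ and of the diffeomorphism $\phi$ supplying the rest. Note that the argument uses nothing about the fibers being Lagrangian, monotone, or even smooth — it applies verbatim to arbitrary, possibly singular, fibers — and it is insensitive to whether $X$ is compact.
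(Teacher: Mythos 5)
Your argument is correct, and it fills in the proof that the paper itself omits (the paper merely cites this as Proposition 2.10 of \cite{ChoKimOhLGbulk}). The contrapositive formulation via the set $D$ of positions displaced by a fixed $\phi$, together with the compactness extraction from properness of $\Phi$ and continuity of $\Phi$ and $\phi$, is the standard proof and matches what the cited reference does; the one place worth noting is that you implicitly use that $\Phi(X) \subset \R^N$ is metrizable, so sequential closedness of $\Phi(X)\setminus D$ is closedness, which is harmless here. You are also right that the argument needs no regularity of the fibers, which matters since the limiting fiber $\Phi^{-1}(\mathbf{u}_\infty)$ in the application is a non-torus fiber.
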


\begin{proof}[Proof of Theorem~\ref{theorem_main}]
By Theorem~\ref{theorem_GZtorusfibernon}, the fiber over each point in the line segment $I_n(t)$ $(0 < t \leq 1)$ is non-displaceable. Lemma~\ref{lemma_limitnondisplace} yields non-displaceability of the fiber at $I_n(0)$.
\end{proof} 

\begin{example}
As depicted in Figure~\ref{Lagfacesfl134} (a), there are three Lagrangian faces of $\Delta_\lambda$ for $\lambda = (6, 0, 0, -6)$. Theorem~\ref{theorem_main} tells us that $S^3 \times T^2$ located at $(u_{1,3} = 3, u_{1,2} = u_{1,1} = u_{2,1} = 0, u_{3,1} = -3)$ is non-displaceable.

\begin{figure}[ht]
	\scalebox{1}{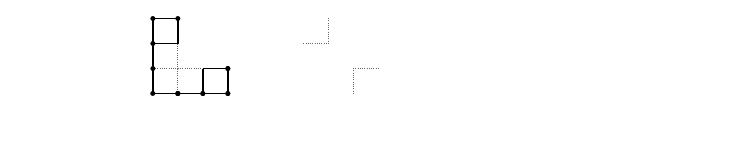}
	\caption{\label{Lagfacesfl134} In (a), there are three Lagrangian faces $f_0, f_1$, and $f_2$ of $\Delta_\lambda$ with $\lambda = (6, 0, 0, -6)$. 
	In (b), ${\bf{u}}_0$ and ${\bf{u}}_1$ are the end points of $I_n(t)$.}	
\end{figure}
\end{example}

For the remaining sections, we prove Theorem~\ref{theorem_GZtorusfibernon} by deforming Lagrangian Floer theory by ambient cycles. In Section~\ref{section_pseudocycles}, we describe cycles that will be employed for deformations. In Section~\ref{Sec_bulkdeformation}, we find a bulk-deformation making the deformed Floer cohomology non-vanishing.

\section{Pseudocycles in coadjoint orbits}\label{section_pseudocycles}

To prove Theorem~\ref{theorem_GZtorusfibernon}, we need to take a cycle of the coadjoint orbit $\mcal{O}_\lambda$ to do a bulk-deformation of Lagrangian Floer theory. In our situation, some vanishing cycles occur through the degeneration $\psi_{1,0}$ in~\eqref{equ_commutingdiagramgrad}. To incorporate those vanishing cycles, a pseudocycle is a suitable notion. 

We begin by recalling a notion of pseudocycles in \cite{ZingerPseudo}. For a continuous map $\varphi \colon M \to X$ between two topological spaces, the \emph{boundary} of $\varphi$ is defined by
\begin{equation}\label{equ_Omegavarphi}
\Omega_\varphi = \bigcap_{{K \subset M \text{ cpt}}} \overline{\varphi(M-K)} 
\end{equation}
where the intersection runs over all compact subsets of $M$. 
\begin{definition}\label{def_pseudocyclesdef}
For a smooth manifold $X$, a smooth map $\varphi \colon M \to X$ is called a \emph{$k$-pseudocycle} if
\begin{enumerate}
\item the domain $M$ is an oriented manifold of dimension $k$, 
\item the image $\varphi(M)$ is pre-compact, and 
\item there exists an open neighborhood $U$ of the boundary $\Omega_\varphi$ in~\eqref{equ_Omegavarphi} satisfying
\begin{equation}\label{equ_HellU}
H_\ell (U; \Z) = 0 \mbox{ for all $\ell > k-2$}.
\end{equation}
\end{enumerate}
\end{definition}

\begin{remark}
A subset $Z$ of a smooth manifold $X$ is said to have \emph{dimension at most} $k$ if there exists a manifold $Y$ of dimension $k$ and a smooth map $\phi \colon Y \to X$ such that $Z \subset \mathrm{Im}(\phi)$. In \cite{ZingerPseudo}, a $k$-pseudocycle is defined by the condition $(1), (2)$, and 
\begin{enumerate}[label=($\theenumi^\prime$)]
\setcounter{enumi}{2}
\item the dimension of $\Omega_\varphi$ is at most $k-2$.
\end{enumerate}
As mentioned in Section 1.2 therein, $(3^\prime)$ implies $(3)$ and the weakened condition $(3)$ is enough to ensure that a pseudocycle gives rise to an integral cycle.
\end{remark}

\begin{lemma}[Lemma 3.5 in \cite{ZingerPseudo}]\label{theorempsudocycle}
Every $k$-pseudocycle represents an integral $k$-cycle of $X$.
\end{lemma}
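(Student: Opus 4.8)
The plan is to exhibit a singular integral $k$-cycle whose underlying chain is supported on $\overline{\varphi(M)}$ and which represents $\varphi_*$ of the fundamental class ``near infinity'' in the expected sense. Since $\varphi(M)$ is precompact, $\overline{\varphi(M)}$ is a compact subset of $X$, and the boundary $\Omega_\varphi$ is a compact subset of $\overline{\varphi(M)}$ over which, by condition (3) of Definition~\ref{def_pseudocyclesdef}, there is an open neighborhood $U$ with $H_\ell(U;\Z) = 0$ for all $\ell > k-2$; in particular $H_{k-1}(U;\Z) = H_k(U;\Z) = 0$. First I would pick a smooth triangulation of a compact smooth manifold-with-corners exhausting $M$: more precisely, choose an exhaustion $K_1 \subset K_2 \subset \cdots$ of $M$ by compact codimension-zero submanifolds with boundary such that $\varphi(M - K_i)$ eventually lies in any prescribed neighborhood of $\Omega_\varphi$ (this is possible by the definition~\eqref{equ_Omegavarphi} of $\Omega_\varphi$). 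Fixing $i$ large enough that $\varphi(M - \mathring{K_i}) \subset U$, triangulate $K_i$ compatibly with its boundary and transport this triangulation by $\varphi$ to obtain a singular $k$-chain $c = \varphi_\#(\llbracket K_i \rrbracket)$ in $X$ whose boundary $\partial c = \varphi_\#(\partial \llbracket K_i \rrbracket)$ is a singular $(k-1)$-cycle supported in $U$.

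The key step is then to kill $\partial c$ using the vanishing of $H_{k-1}(U;\Z)$: because $\partial c$ is a cycle in $U$ representing a class in $H_{k-1}(U;\Z) = 0$, there is a singular $k$-chain $b$ in $U$ with $\partial b = \partial c$. Setting $z \coloneqq c - b$, we obtain a singular $k$-cycle of $X$. Its homology class is independent of the choices (of $i$, of the triangulation, of the bounding chain $b$), which I would verify by the usual argument: enlarging $K_i$ to $K_j$ changes $c$ by a chain supported in $U$ whose boundary lies in $U$, and the difference of two bounding chains $b, b'$ is a $k$-cycle in $U$, hence nullhomologous since $H_k(U;\Z) = 0$; so $z$ and $z'$ differ by a boundary in $X$. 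This gives a well-defined class $[\varphi] \in H_k(X;\Z)$, which is the assertion of the lemma.

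I expect the main technical obstacle to be the first step—producing the chain $c$ and controlling its boundary—rather than the homological algebra. One has to be careful that $M$, while a manifold, need not admit a particularly nice exhaustion a priori; the standard fix is to invoke that every smooth manifold admits an exhaustion by compact codimension-zero submanifolds with boundary (via a proper Morse function or a proper smooth map to $[0,\infty)$ and regular values), and that such a submanifold admits a smooth triangulation extending one on its boundary (Whitehead). Then $\varphi$ restricted to $K_i$ is a smooth map from a compact triangulated manifold-with-boundary, so $\varphi_\#(\llbracket K_i \rrbracket)$ is a genuine singular chain and $\partial$ commutes with $\varphi_\#$. The precompactness hypothesis (2) is what guarantees everything takes place inside the compact set $\overline{\varphi(M)} \subset X$, and hypothesis (3)—used only through $H_{k-1}(U;\Z) = H_k(U;\Z) = 0$—is exactly what is needed and no more, which explains the remark that the weakened condition (3) suffices. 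Since this is precisely Lemma~3.5 of \cite{ZingerPseudo}, I would in the write-up simply cite that reference for the details after indicating this outline.
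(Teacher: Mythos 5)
Your argument is correct and is in essence the same as the paper's: both produce a compact codimension-zero submanifold-with-boundary $\overline{V}\subset M$ whose complement maps into the prescribed neighborhood $U$ of $\Omega_\varphi$, push forward its relative fundamental class, and use the vanishing $H_k(U;\Z)=H_{k-1}(U;\Z)=0$ to land in $H_k(X;\Z)$. The only difference is bookkeeping: you carry out the step ``cap off $\partial c$ inside $U$ and check independence of the capping'' explicitly at the singular-chain level, whereas the paper packages that entire step into the long exact sequence of the pair $(X,U)$, which gives $H_k(X,U;\Z)\simeq H_k(X;\Z)$ directly and lets one take $\varphi_*[\overline V,\partial\overline V]\in H_k(X,U;\Z)$ without ever choosing a bounding chain $b$ or invoking a triangulation. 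Your chain-level version is slightly more explicit but also slightly heavier (the appeal to Whitehead's triangulation theorem is unnecessary, since the relative fundamental class of a compact oriented manifold-with-boundary already exists without it); the paper's version is cleaner for exactly this reason.
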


For the reader's convenience, we include the proof of Lemma~\ref{theorempsudocycle} presented in \cite{ZingerPseudo}.

\begin{proof}
Suppose that $\varphi \colon M \to X$ is a $k$-pseudocycle and there exists an open neighborhood $U$ of $\Omega_\varphi$ in $X$ satisfying~\eqref{equ_HellU}. Then, by the long exact sequence of homology groups of the pair $(X, U)$, the induced homomorphism given by the inclusion is an isomorphism 
$$
H_k (X, U; \Z) \simeq H_k (X; \Z).
$$ 

Since the closure $\overline{\varphi(M)}$ is compact in $X$, the complement $K \coloneqq M - \varphi^{-1}(U)$ is a compact subset of $M$ by the definition of $\Omega_\varphi$. 
We take an open neighborhood $V$ of $K$ in $M$ such that the closure $\overline{V}$ is a compact manifold with boundary. The closure $\overline{V}$ carries an inherited orientation from $M$ and the orientation $[\overline{V}]$ in $H_k (\overline{V}, \partial \overline{V}; \Z)$. Then we have an integral class $\varphi_* ([\overline{V}])$ in $H_k (X, U; \Z) \simeq H_k (X; \Z)$. 
\end{proof}

We return back to the Gelfand--Zeitlin case. To deform Floer cohomology later on, we shall employ the cycle corresponding to a pseudocycle. Keeping the toric degeneration in Theorem~\ref{theorem_toricdegenerations} in mind, consider the following toric divisors of the toric variety $\mcal{X}_0 \coloneqq \mcal{X} \cap V(t =0) \colon$
\begin{equation}\label{equ_toricdivisors}
\scr{D}_0 \coloneqq V(p_{n+1} ) \mbox{ and } \underline{\scr{D}}_0 \coloneqq V(p_{\underline{n+1}} ).
\end{equation}
The above divisors $\scr{D}_0$ and $\underline{\scr{D}}_0$ are respectively the inverse image of $f$ and $\underline{f}$ under the moment map $\Phi_0$ in ~\eqref{equ_coorGC} where 
\begin{equation}
f = \{ \mathbf{u} \in \Delta_\lambda \mid u_{1,n} = n(n-1) \} \quad \mbox{and} \quad \underline{f} =  \{ \mathbf{u} \in \Delta_\lambda \mid u_{n,1} = - n(n-1) \}.
\end{equation}
Note that $\dim f = \dim \underline{f} = \dim \Delta_\lambda - 1 = (2n - 1) - 1 = 2n - 2$.

\begin{proposition} Let $\scr{D}_0$ and $\underline{\scr{D}}_0$ be the toric divisors of the toric variety $\mcal{X}_0$ defined ~\eqref{equ_toricdivisors}.
\begin{enumerate}
\item When $n = 2$, the divisors $\scr{D}_0$ and $\underline{\scr{D}}_0$ are smooth.
\item When $n \geq 3$, the divisors $\scr{D}_0$ and $\underline{\scr{D}}_0$ are \emph{singular} toric varieties. Its singular locus is the toric subvariety of complex codimension three of $\scr{D}_0$ (resp. $\underline{\scr{D}}_0$) corresponding to the face given by
$$
\{ u_{1,1} = u_{1,2} = u_{2,1} = 0 \} \cap f \quad (\mbox{resp. $\{ u_{1,1} = u_{1,2} = u_{2,1} = 0 \} \cap \underline{f}$}). 
$$
Thus, the singular locus $\mathrm{Sing}(\scr{D}_0)$ of $\scr{D}_0$ is equal to the intersection $\mathrm{Sing}(\mcal{X}_0)  \cap \scr{D}_0$.
\end{enumerate}
\end{proposition}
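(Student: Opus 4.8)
The whole statement is a computation with the binomial ideal defining $\mcal{X}_0$ and the coordinate subvarieties $\scr{D}_0 = V(p_{n+1})$, $\underline{\scr{D}}_0 = V(p_{\underline{n+1}})$. The plan is to pass to a torus-invariant open affine chart, read off the equations, and recognize the resulting affine toric variety.

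\smallskip

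\emph{Step 1: Set up the binomial ideal and the charts.} Recall from~\eqref{equ_toricdegenerations} that $\mcal{X}_0 = V\bigl(p_1 p_{\underline 1} - p_2 p_{\underline 2}\bigr) \subset \CP^n \times \CP^n$, where the two $\CP^n$ factors have homogeneous coordinates $[p_1:\cdots:p_{n+1}]$ and $[p_{\underline 1}:\cdots:p_{\underline{n+1}}]$. (All other monomials $p_i p_{\underline i}$, $i\ge 3$, are killed in the $t=0$ fiber, but note that the \emph{scheme-theoretic} central fiber is cut out by the full toric binomial ideal obtained from the SAGBI degeneration in \cite{KoganMiller}; writing $\mcal X_0$ as $V(p_1 p_{\underline 1}-p_2 p_{\underline 2})$ as a subvariety of $\CP^n\times\CP^n$ already captures it as a reduced variety, which is all we need for a statement about smoothness and singular loci.) Intersecting with $\scr{D}_0=\{p_{n+1}=0\}$, we work in the affine cover of this $\CP^n$ by the charts $\{p_i\ne 0\}$ for $i\le n$; similarly we cover the other $\CP^n$ by $\{p_{\underline j}\ne 0\}$. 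So $\scr D_0$ is covered by affine opens $U_{i,j} = \scr D_0 \cap \{p_i\ne 0, p_{\underline j}\ne 0\}$ with $1\le i\le n$, $1\le j\le n+1$.

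\smallskip

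\emph{Step 2: $n=2$ is smooth.} When $n=2$ the equation is $p_1 p_{\underline 1} = p_2 p_{\underline 2}$ in $\CP^2\times\CP^2$, and $\scr D_0 = V(p_3)$. In each chart $U_{i,j}$ the hypersurface becomes a single bilinear equation in the affine coordinates, of the form (after dehomogenizing) $x = yz$ or $xy = z$ or $xy=zw$; in every one of these the partial derivatives do not all vanish simultaneously because at least one variable appears to the first power with unit coefficient. One checks the finitely many charts directly and concludes $\scr D_0$ (hence by symmetry $\underline{\scr D}_0$) is smooth. This is the short base case.

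\smallskip

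\emph{Step 3: $n\ge 3$, locate the singular locus.} Here the key point is that $\scr D_0 \subset \mcal X_0$ is a coordinate subvariety, so its singularities come from where $\mcal X_0$ itself is singular together with where $\scr D_0$ fails to be a Cartier divisor in a smooth point of $\mcal X_0$ — but since $\scr D_0$ is a toric divisor, the latter does not introduce new singularities: a prime toric divisor in a toric variety is smooth exactly over the smooth locus of the ambient variety (equivalently, the star of the corresponding ray is made of smooth cones iff the ambient cones are). So one shows: (i) $\mathrm{Sing}(\mcal X_0)$ is the codimension-three toric subvariety $V(p_1,p_2,p_{\underline 1},p_{\underline 2})$, which is already recorded in the excerpt right after~\eqref{equation_GZ-pattern} (this is the stratum $u_{1,1}=u_{1,2}=u_{2,1}=\lambda_2$, which for the choice~\eqref{equ_lambda123} is $=0$); (ii) this singular locus is contained in $\scr D_0$, since on it $p_1=p_2=p_{\underline 1}=p_{\underline 2}=0$ forces the remaining coordinates $p_3,\dots,p_{n+1}$ to be nonzero-in-projective-sense but, crucially, since all $p_i p_{\underline i}=0$ for $i\ge 3$ in $\mcal X_0$, either all $p_i$ ($i\ge 3$) or all $p_{\underline i}$ ($i\ge 3$) vanish — actually one wants the cleaner statement that $\mathrm{Sing}(\mcal X_0)\subset V(p_{n+1})\cap V(p_{\underline{n+1}})$, which follows because on $\mathrm{Sing}(\mcal X_0)$ one of the two rulings collapses; and (iii) away from $\mathrm{Sing}(\mcal X_0)$, the divisor $\scr D_0$ is smooth by the toric-divisor principle of (i). Combining (i)--(iii) gives $\mathrm{Sing}(\scr D_0) = \mathrm{Sing}(\mcal X_0)\cap \scr D_0$, and translating back through the moment map $\Phi_0$ of~\eqref{equ_momentmap} identifies this stratum with the face $\{u_{1,1}=u_{1,2}=u_{2,1}=0\}\cap f$, which has complex codimension three in $\scr D_0$ by a dimension count ($\dim f = 2n-2$, and cutting by the three independent equations $u_{1,1}=u_{1,2}=u_{2,1}=0$ drops dimension by $3$). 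The same argument applies verbatim to $\underline{\scr D}_0$ and $\underline f$.

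\smallskip

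\emph{Main obstacle.} The genuinely delicate point is part (iii) of Step 3: verifying carefully that restricting to a coordinate hyperplane section of a singular affine toric variety does not \emph{create} singularities at points where the ambient variety was smooth, and conversely that the section really is singular exactly along the intersection with the ambient singular locus. The cleanest route is to argue torically — describe the fan of $\mcal X_0$ (equivalently the normal fan of $\Delta_\lambda$), identify the ray $\rho$ corresponding to $\scr D_0$, and observe that the fan of the orbit closure $V(\rho)=\scr D_0$ is the star of $\rho$, whose cones are smooth precisely when the corresponding cones of $\mcal X_0$ are; then smoothness of $\scr D_0$ at a torus-orbit is equivalent to smoothness of $\mcal X_0$ there. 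The alternative, brute-force route — writing the Jacobian of the binomial ideal in every affine chart $U_{i,j}$ and computing its rank — is elementary but bookkeeping-heavy; I would only fall back on it for $n=2$ in Step 2 and for sanity checks in low cases. Everything else (the description of $\mathrm{Sing}(\mcal X_0)$, the codimension-three count, the moment-map translation) is routine given Theorem~\ref{theorem_AnChoKim} and the explicit formulas~\eqref{equ_momentmap}.
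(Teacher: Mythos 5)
Your proposal takes a genuinely different route for the crux of the argument, but that route contains a fatal gap, and a separate misunderstanding of $\mcal{X}_0$ compounds it.

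\smallskip

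\textbf{The central gap.} In Step~3(iii) and the ``main obstacle'' paragraph you rest the whole $n\ge 3$ case on the principle that a prime toric divisor $D_\rho$ in a toric variety $X_\Sigma$ is smooth exactly over $X_\Sigma\setminus\mathrm{Sing}(X_\Sigma)$, equivalently that $\sigma/\rho$ is a smooth cone iff $\sigma$ is. One implication holds (a smooth cone quotients to a smooth cone), so $\mathrm{Sing}(D_\rho)\subset\mathrm{Sing}(X_\Sigma)\cap D_\rho$ always. But the converse is false: take $N=\Z^2$, $\sigma=\mathrm{Cone}((1,0),(1,2))$ (the $A_1$ cone) and $\rho=\mathrm{Cone}((1,0))$; then $\sigma$ is singular while $\sigma/\rho\subset N/\Z(1,0)\simeq\Z$ is smooth, i.e.\ the toric divisor through an $A_1$ point is a smooth curve. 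So your argument does not establish the inclusion $\mathrm{Sing}(\mcal{X}_0)\cap\scr{D}_0\subset\mathrm{Sing}(\scr{D}_0)$, which is exactly the half of the proposition that requires an argument. The paper sidesteps this entirely: since $\scr{D}_0=\mcal{X}_0\cap V(p_{n+1})$ and the projection $(\CP^n\times\CP^n)\cap V(p_{n+1})\simeq\CP^{n-1}\times\CP^n$ identifies $\scr{D}_0$ with the bidegree-$(1,1)$ hypersurface $V(p_1p_{\underline 1}-p_2p_{\underline 2})\subset\CP^{n-1}\times\CP^n$, one just reads off the singular locus from the Jacobian of a single quadratic: it is $\{p_1=p_2=p_{\underline 1}=p_{\underline 2}=0\}$, nonempty iff $n\ge 3$. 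This both handles $n\ge 3$ and gives $n=2$ for free (in $\CP^1\times\CP^2$ the locus $p_1=p_2=0$ is empty), with no need for fan gymnastics or chart-by-chart Jacobians.

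\smallskip

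\textbf{A factual error.} You assert ``all $p_ip_{\underline i}=0$ for $i\ge 3$ in $\mcal{X}_0$'' and use it to conclude $\mathrm{Sing}(\mcal{X}_0)\subset\scr{D}_0$. Both are false. The fiber $\mcal{X}_0$ is precisely $V(p_1p_{\underline 1}-p_2p_{\underline 2})\subset\CP^n\times\CP^n$ -- the higher Pl\"ucker monomials appear with coefficient $t$ in~\eqref{equ_toricdegenerations} and simply drop out of the defining equation at $t=0$; they do not become new relations. (For instance $([0{:}0{:}1{:}1],[0{:}0{:}1{:}1])$ lies in $\mathrm{Sing}(\mcal{X}_0)$ for $n=3$ but has $p_{n+1}\ne 0$.) The parenthetical in Step~1 hinting at a ``full toric binomial ideal'' beyond the single binomial reflects the same confusion: since $\mcal{F}\ell(1,n;n+1)$ is a hypersurface in $\CP^n\times\CP^n$, so is every fiber $\mcal{X}_t$, including $\mcal{X}_0$. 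Fortunately the inclusion $\mathrm{Sing}(\mcal{X}_0)\subset\scr{D}_0$ is not needed for the stated conclusion $\mathrm{Sing}(\scr{D}_0)=\mathrm{Sing}(\mcal{X}_0)\cap\scr{D}_0$, so this error is excisable, but it signals that the description of $\mcal{X}_0$ should be straightened out before proceeding.
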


\begin{proof}
As the involution $\iota \colon \mcal{X}_0 \to \mcal{X}_{{0}}$ determined by $p_j \leftrightarrow p_{\underline{j}}$ for $j = 1, 2, \cdots, n+1$ swaps $\scr{D}_0$ and $\underline{\scr{D}}_0$, it suffices to show the statement for $\scr{D}_0$. When $n = 2$, the facet given by $u_{1,2} = 2$ has four vertices at $(u_{1,2}, u_{1,1}, u_{2,1}) = (2, 0, 0), (2,2,0), (2,2,-2), (2,-2,-2)$ and hence $\scr{D}_0$ is isomorphic to the Hirzebruch surface $\mathbb{F}_1$ of degree one. In particular, the divisor $\scr{D}_0$ is smooth.

Suppose that $n \geq 3$. Under the isomorphism $(\CP^{n} \times \CP^n) \cap V(p_{n+1}) \simeq \CP^{n-1} \times \CP^n$ via the projection, the intersection $\mcal{X}_0 \, \cap V(p_{n+1})$ is then defined by $p_1 {p}_{\underline{1}} - p_2 {p}_{\underline{2}}$ in $\CP^{n-1} \times \CP^n$. This variety is a singular variety whose singular locus is given by $p_1 = p_{\underline{1}} = p_2 = p_{\underline{2}} = 0$. By the explicit expression of the moment map $\Phi_0$ in~\eqref{equ_coorGC}, the singular locus is equal to the inverse image of the face defined by $u_{1,1} = u_{1,2} = u_{2,1} = 0$. In other words, 
$$
\mathrm{Sing}(\scr{D}_0) \coloneqq \Phi_0^{-1} (\{ u_{1,1} = u_{1,2} = u_{2,1} = 0 \}) \cap \scr{D}_0
$$
as desired.
\end{proof}

When $n \geq 3$, let us take 
$$
M \coloneqq  \scr{D}_0 \backslash \mathrm{Sing}(\scr{D}_0) = \scr{D}_0 \backslash \mathrm{Sing}(\mcal{X}_0) \mbox{ and } \underline{M} \coloneqq \underline{\scr{D}}_0 \backslash \mathrm{Sing}(\scr{D}_0) = \underline{\scr{D}}_0 \backslash \mathrm{Sing}(\mcal{X}_0).
$$ 
Recall from Theorem~\ref{theorem_toricdegenerations} that the map $\psi_{1,0}$ is a symplectomorphism on the smooth locus $\psi_{1,0}^{-1} ( \mcal{X}_0 \backslash \mathrm{Sing}(\mcal{X}_0))$. We then have smooth maps
\begin{equation}\label{equ_fpsi10}
\begin{split}
&\varphi \coloneqq \psi^{-1}_{1,0} \big{|}_{M} \colon M \to X= \mcal{F}\ell (1, n ; n+1) \mbox{ and } 
\\ &\underline{\varphi} \coloneqq \psi^{-1}_{1,0} \big{|}_{\underline{M}} \colon \underline{M} \to X= \mcal{F}\ell (1, n ; n+1). 
\end{split}
\end{equation}
The following proposition claims that  $\varphi$ and $\underline{\varphi}$ are $(4n-4)$-pseudocycles. 

\begin{proposition}\label{proposition_pseudocycle}
The maps $\varphi$ and $\underline{\varphi}$ in~\eqref{equ_fpsi10} are $(4n-4)$-pseudocycles.
\end{proposition}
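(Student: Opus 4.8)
The goal is to verify the three defining conditions of a $(4n-4)$-pseudocycle (Definition~\ref{def_pseudocyclesdef}) for $\varphi$ and, by the $p_j \leftrightarrow p_{\underline j}$ symmetry, only one of the two maps needs treatment. Condition (1) is immediate: $M = \scr{D}_0 \setminus \mathrm{Sing}(\scr{D}_0)$ is the smooth locus of a complex toric variety, hence a complex (so canonically oriented) manifold, and $\dim_\R M = 2 \dim_\C \scr{D}_0 = 2(2n-2) = 4n-4$, matching the expected degree. Condition (2) is also easy: $\varphi(M) \subset \mcal{F}\ell(1,n;n+1)$, which is compact, so $\varphi(M)$ is automatically pre-compact. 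The real content is condition (3), controlling the boundary $\Omega_\varphi$.

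**Identifying the boundary.** The first substantive step is to show that $\Omega_\varphi$ is contained in $\psi_{1,0}^{-1}\!\big(\mathrm{Sing}(\scr{D}_0)\big) = \psi_{1,0}^{-1}\!\big(\mathrm{Sing}(\mcal{X}_0) \cap \scr{D}_0\big)$, i.e. in the fiber of $\mcal{F}\ell(1,n;n+1)$ over the face $\{u_{1,1}=u_{1,2}=u_{2,1}=0\}\cap f$. Indeed, for a sequence $x_j = \varphi(m_j)$ escaping every compact subset of $M$, the points $m_j$ must accumulate (in $\scr{D}_0$, which is compact) on the only part of $\scr{D}_0$ not contained in $M$, namely $\mathrm{Sing}(\scr{D}_0)$; since $\psi_{1,0}^{-1}$ is continuous (Theorem~\ref{theorem_toricdegenerations}, $\psi_{1,0}$ continuous) on all of $\mcal{X}_0$, and a homeomorphism on the smooth locus, any limit point of $x_j$ lies in $\psi_{1,0}^{-1}(\mathrm{Sing}(\scr{D}_0))$. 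This gives $\Omega_\varphi \subseteq \psi_{1,0}^{-1}(\mathrm{Sing}(\scr{D}_0))$, and since $\psi_{1,0}$ is a symplectomorphism (in particular a homeomorphism) near $\mathrm{Sing}(\mcal{X}_0)$ is not available, but $\psi_{1,0}^{-1}(\mathrm{Sing}(\scr{D}_0))$ is the continuous image of a compact set and can be described concretely: it is the GZ fiber bundle over the face $\{u_{1,1}=u_{1,2}=u_{2,1}=0\}\cap f$, whose topology is given by Theorem~\ref{CKO_topologyoffiber} — the fiber is $S^3 \times T^{\dim(\{u_{1,1}=u_{1,2}=u_{2,1}=0\}\cap f)}$ over interior points and degenerates on lower strata.

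**Dimension count and the neighbourhood condition.** Next I would compute $\dim \psi_{1,0}^{-1}(\mathrm{Sing}(\scr{D}_0))$. The singular locus $\mathrm{Sing}(\scr{D}_0)$ has complex codimension three in $\scr{D}_0$, hence $\dim_\C \mathrm{Sing}(\scr{D}_0) = (2n-2) - 3 = 2n-5$; on the flag-variety side, $\psi_{1,0}^{-1}(\mathrm{Sing}(\scr{D}_0))$ is a $\Phi_\lambda$-saturated subset over a face $g$ of $\Delta_\lambda$ with $\dim g = (2n-2) - 3 = 2n-5$ and generic fiber $S^3 \times T^{2n-5}$ (by Condition~$(1)$ of~\eqref{equ_conditionsphericalfact} applied to the face $g$), so $\dim_\R \psi_{1,0}^{-1}(\mathrm{Sing}(\scr{D}_0)) = (2n-5) + (3 + (2n-5)) = 4n - 7$. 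Since $4n-7 \le (4n-4) - 2 = 4n-6$ for all $n$, the subset $\Omega_\varphi$ has dimension at most $k-2$ with $k = 4n-4$; this verifies condition $(3')$ of the Remark after Definition~\ref{def_pseudocyclesdef} (a smooth manifold of dimension $4n-7$ surjecting onto it can be built from the iterated-bundle description of Theorem~\ref{CKO_topologyoffiber}, or more crudely by noting $\psi_{1,0}^{-1}(\mathrm{Sing}(\scr{D}_0))$ is a finite union of smooth strata each of dimension $\le 4n-7$), and hence condition $(3)$. One must also confirm $\varphi$ is smooth, which it is: it is the restriction to the smooth locus of the symplectomorphism $\psi_{1,0}^{-1}$ of Theorem~\ref{theorem_toricdegenerations}(5), which is smooth there.

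**Main obstacle.** The delicate point is pinning down $\Omega_\varphi$ and its dimension rigorously: a priori $\Omega_\varphi$ could be strictly smaller than $\psi_{1,0}^{-1}(\mathrm{Sing}(\scr{D}_0))$, which only helps, but one must be sure it is not accidentally \emph{larger} — i.e. that no escaping sequence in $M$ can have an image limit in the smooth part $\varphi(M)$ itself. This is where properness/continuity of $\psi_{1,0}$ and compactness of $\scr{D}_0$ are used: any sequence leaving all compacts of $M = \scr{D}_0 \setminus \mathrm{Sing}(\scr{D}_0)$ subconverges in $\scr{D}_0$ to a point of $\mathrm{Sing}(\scr{D}_0)$, and continuity of $\psi_{1,0}$ forces the image limit into $\psi_{1,0}^{-1}(\mathrm{Sing}(\scr{D}_0))$. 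Once $\Omega_\varphi \subseteq \psi_{1,0}^{-1}(\mathrm{Sing}(\scr{D}_0))$ is established, the rest is the codimension bookkeeping above, which is routine given Theorem~\ref{CKO_topologyoffiber} and the codimension-three statement for $\mathrm{Sing}(\scr{D}_0)$ from the preceding proposition.
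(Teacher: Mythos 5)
Your argument agrees with the paper through the easy steps: conditions (1) and (2) of Definition~\ref{def_pseudocyclesdef}, the orientation of $M$ as the smooth locus of a complex variety, the compactness of the target, and the identification $\Omega_\varphi \subseteq \Phi_\lambda^{-1}(g)$ (your compactness argument for this last containment is actually spelled out more carefully than in the paper). The dimension count $\dim_\R \Phi_\lambda^{-1}(g) = 4n-7$ also matches.

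Where you diverge is in how you handle condition (3), and this is where the gap lies. You attempt to verify the stronger condition $(3')$ of the Remark after Definition~\ref{def_pseudocyclesdef}: that $\Omega_\varphi$ is contained in the image of a \emph{smooth} map from a manifold of dimension $\le 4n-6$. You justify this by asserting that $\Phi_\lambda^{-1}(g)$ is ``a finite union of smooth strata each of dimension $\le 4n-7$'' or that the iterated-bundle description of Theorem~\ref{CKO_topologyoffiber} produces such a manifold. Neither assertion is established. Theorem~\ref{CKO_topologyoffiber} describes the topology of a \emph{single} fiber $\Phi_\lambda^{-1}(\mathbf{u})$, not of the saturated preimage $\Phi_\lambda^{-1}(g_{i,j})$ of a face interior. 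The paper's local trivialization $\Phi_\lambda^{-1}(U) \simeq U \times \Phi_\lambda^{-1}(\mathbf{u})$, built from the diffeomorphisms $\rho_{\mathbf{u}'}$, is explicitly only a \emph{topological} trivialization (``the diffeomorphisms $\rho_\bullet$ depend \emph{continuously} on $\mathbf{u}'$''); this gives no smooth structure on $\Phi_\lambda^{-1}(g_{i,j})$ compatible with the inclusion into $X$. The obstruction is real: $g$ is precisely a stratum where the Gelfand--Zeitlin components cease to be smooth (non-torus fibers, vanishing cycles, eigenvalue collisions), so smoothness of the saturated preimage is not something you get for free. Filling this hole would require a separate argument, and it is exactly the thing the paper deliberately avoids by choosing to verify the weaker homological condition $(3)$ instead: the paper builds an explicit open neighborhood $\mathscr{U}$ of $\Phi_\lambda^{-1}(g)$ from strata-wise neighborhoods $\mathscr{U}_{i,j}$, shows each retracts onto a fiber (Lemma~\ref{lemma_homologygroupofuij}), controls the homology of overlaps (Lemma~\ref{lemma_Hofinter}), and assembles the vanishing of $H_\ell(\mathscr{U};\Z)$ for $\ell \geq 4n-6$ by Mayer--Vietoris (Proposition~\ref{proposition_homologyvanishing}). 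All of this uses only the topological fiber-bundle structure over each face and the known topology of the fibers, never any smoothness of the strata. So the homological route is not a matter of taste here; it is exactly what makes the proof close without the unsupported smoothness claim that your version relies on.
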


We shall only prove that the map $\varphi$ is a $(4n-4)$-pseudocycle as the other map can be similarly dealt with. To convince ourselves of the statement of Proposition~\ref{proposition_pseudocycle}, we first do some dimension counting. Since $M$ is an open  subset of a divisor, 
$$
\dim M =  \dim_\R \mcal{F}\ell (1, n;n+1) - 2 =  2 \cdot (2n-1) - 2 = 4n - 4.
$$
The {boundary} $\Omega_\varphi$ is contained in $\Phi^{-1}_\lambda (g)$ where $g$ is the stratum of codimension $3$ given by
$$
g \coloneqq \{ u_{1,1} = u_{1,2} = u_{2,1} = 0 \} \cap f
$$
because for each $\epsilon > 0$, a compact subset of $M$ can be chosen as $K_\epsilon \coloneqq \Phi^{-1}_0 (\Delta_\lambda \backslash U_\epsilon) \cap \scr{D}_0$ where $U_\epsilon$ is an $\epsilon$-neighborhood of $g$, that is, $U_\epsilon = \{ \mathbf{u} \in \Delta_\lambda \mid d_{\mathrm{Euc}} (\mathbf{u}, g) < \epsilon \}$. Since each fiber in the relative interior $\mathring{g}$ of $g$ has a vanishing $S^3$-factor according to Theorem~\ref{CKO_topologyoffiber}, 
$$
\dim_\mathbb{R} (\Phi^{-1}_\lambda (g)) = 
2 \cdot \dim g + \dim S^3 = 2 \cdot (\dim f - 3) + 3 = 4n - 7.
$$
Therefore, the boundary has at least codimension $3$.

We now start a proof of Proposition~\ref{proposition_pseudocycle}. The complex submanifold $M = \scr{D}_0 \backslash \mathrm{Sing}(\scr{D}_0)$ carries a canonical orientation and hence the condition $(1)$ holds. Since the range $X = \mcal{F}\ell(1,n;n+1)$ is compact, the condition $(2)$ is automatically satisfied. It remains to provide an open neighborhood $\mathscr{U}$ of $\Omega_\varphi$ such that ${H}_\ell (\mathscr{U}; \Z) = 0 $ for all $\ell > \dim_\R M - 2 = (4n - 4) - 2$. 

The polytope $\Delta_\lambda$ is stratified by its faces so that it can be expressed as the disjoint union of relative interiors of its faces. We label the relative interior of each face of $\Delta_\lambda$ with a double index as follows. The first component of the double index keeps track of the dimension of the face, that is, $\dim f_{i,\bullet} = i$. The second component is chosen by fixing an ordering on the set of the relative interiors of $i$-dimensional faces, $f_{i,1}, f_{i,2}, \cdots, f_{i, \kappa^\prime_i}$ where $\kappa^\prime_i$ is the number of $i$-dimensional faces of $\Delta_\lambda$. Note that $\kappa^\prime_{2n-1} = 1$ and the interior of $\Delta_\lambda$ is equal to $f_{2n-1,1}$. The polytope $\Delta_\lambda$ can be expressed as the disjoint union
\begin{equation}\label{equ_stratifi}
\Delta_\lambda = \bigcup_{i=0}^{2n-1} \bigcup_{j=1}^{\kappa^\prime_i} f_{i,j}.
\end{equation}
The set of faces of $\Delta_\lambda$ possesses a partial ordering $\subset$ given by the set-theoretical inclusion. Recall that this partially ordered set can be described via the order-preserving correspondence $\Psi$ in Theorem~\ref{theorem_AnChoKim}. 

For each point $\mathbf{u} \in f_{i,j}$, we take an open ball ${U}$ around $\mathbf{u}$ in the subspace $f_{i,j}$. 
For each point $\mathbf{u}^\prime \in {U}$, the line segment connecting $\mathbf{u}$ and $\mathbf{u}^\prime$ within $f_{i,j}$ determines a diffeomorphism $\rho_{\mathbf{u}^\prime} \colon \Phi^{-1}_\lambda (\mathbf{u}^\prime) \to \Phi^{-1}_\lambda (\mathbf{u})$, see \cite[Lemma 6.13]{ChoKimSO} for the construction. As the diffeomorphisms $\rho_\bullet$ depend continuously on $\mathbf{u}^\prime$, we then have a local trivialization
$$
\Phi_\lambda^{-1}({U}) \simeq {U}  \times \Phi_\lambda^{-1}(\mathbf{u}) \quad \quad x \mapsto \left(\Phi_\lambda(x), \rho_{\Phi_\lambda(x)}(x) \right)
$$
so that the map $\Phi_\lambda \colon \Phi^{-1}_\lambda(f_{i,j}) \to f_{i,j}$ is a fiber bundle. Since $f_{i,j}$ is contractible, the fiber bundle
\begin{equation}\label{equ_fiberbundleij}
\Phi_\lambda \colon \Phi^{-1}_\lambda(f_{i,j}) \to f_{i,j} \simeq \R^i
\end{equation}
can be trivialized. In other words, the Gelfand--Zeitlin system $\Phi_\lambda$ over each stratum of $\Delta_\lambda$ is a trivial fiber bundle. Set $\mathscr{F}_{i,j} \coloneqq \Phi^{-1}_\lambda(f_{i,j})$. In particular, we have
\begin{equation}\label{equ_inverseimage}
\mathscr{F}_{i,j}  \simeq \R^{i} \times \Phi^{-1}_\lambda (\mathbf{u}_{i,j})
\end{equation}
for any point $\mathbf{u}_{i,j}$ of $f_{i,j}$.

The stratification in~\eqref{equ_stratifi} induces that of the face $g$, say
$$
g = \bigcup_{i=0}^{2n-5} \bigcup_{j=1}^{\kappa_i} g_{i,j}
$$
where $\{g_{i,1}, g_{i,2}, \cdots, g_{i, \kappa_i}\}$ is the set of the relative interiors of $i$-dimensional faces of $g$ and $\kappa_i$ is the number of $i$-dimensional faces of $g$. We consider the lexicographic partial ordering $\preceq$ on the set of double indices on the set of (relative interiors of) faces of $g$, that is, 
$$
(r,s) \preceq (i,j) \mbox{ if and only }
r \leq i \mbox{ or } (r = i \mbox{ and } s < j). 
$$
To obtain a desired open neighborhood $\mathscr{U}$ of $\Phi^{-1}_\lambda (g) (\supset  \Omega_\varphi)$, we are planning to glue open neighborhoods $\mathscr{U}_{i,j}$'s of the inverse image of $g_{i,j}$'s inductively with respect to the partial ordering $\preceq$. By applying the Mayer--Vietoris sequence to the glued set, we shall verify the desired vanishing result on homology groups at the end. 

For a sufficiently small positive number $\epsilon > 0$, let us take an open neighborhood $U_{i,j}$ of $g_{i,j}$ 
\begin{equation}\label{equuij}
\begin{split}
U_{i,j} \coloneqq  \left\{ \mathbf{u} \in \Delta_\lambda \mid d_\mathrm{Eud} (\mathbf{u}, \mathbf{u}^\prime) <  \epsilon \mbox{ for some $\mathbf{u}^\prime \in g_{i,j}$} \right\} \backslash \left( \cup_{\clubsuit}  \, {{f_{r,s}}} \right)
\end{split}
\end{equation}
where the union $\cup_\clubsuit$ runs over all faces $f_{r,s}$ of $f$ such that $\overline{f_{r,s}}  \cap g_{i,j} = \emptyset$. Since $\cup_{\clubsuit} \, {f_{r,s}} = \cup_{\clubsuit} \, {\overline{f_{r,s}}}$, the set $U_{i,j}$ is an open set. We obtain an open neighborhood $\mathscr{U}_{i,j}$ of $\mathscr{G}_{i,j} \coloneqq \Phi^{-1}_\lambda (g_{i,j})$ defined by
\begin{equation}\label{equ_openneighborhoods}
\mathscr{U}_{i,j} \coloneqq \Phi^{-1}_\lambda \left( U_{i,j} \right).
\end{equation}

\begin{lemma}\label{lemma_homologygroupofuij} 
If $n \geq 3$, then for each face $g_{i,j}$ of the face $g$, the open neighborhood $\mathscr{U}_{i,j}$ of $\mathscr{G}_{i,j} \coloneqq \Phi^{-1}_\lambda (g_{i,j})$ in~\eqref{equ_openneighborhoods} satisfies
\begin{equation}\label{equ_huij0}
H_\ell (\mathscr{U}_{i,j} ; \Z) = 0 \mbox{ for all $\ell \geq \dim_\R \Phi_\lambda^{-1}(f) - 3 = (4n - 4) - 3 = 4n - 7$}. 
\end{equation}
\end{lemma}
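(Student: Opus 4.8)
The plan is to prove the stronger assertion that $\mathscr{U}_{i,j}$ is homotopy equivalent to a finite CW complex of dimension at most $2n-2$. Since $2n-2\le 4n-8<4n-7$ for every $n\ge 3$, this at once gives $H_\ell(\mathscr{U}_{i,j};\Z)=0$ for all $\ell\ge 4n-7$. The point is that, although $\mathscr{U}_{i,j}$ is an open subset of the $(4n-2)$-manifold $X$, it deformation retracts onto a small core lying over a neighborhood of $g_{i,j}$.

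First I would read off the local structure of $\Phi_\lambda$ over $U_{i,j}$. Up to the evident retraction, $U_{i,j}$ is the open star of the face $g_{i,j}$ in $\Delta_\lambda$ intersected with a small ball: it is contractible and stratified by the relative interiors of the faces $F\supseteq g_{i,j}$, over each of which $\Phi_\lambda$ is a trivial bundle with fiber $\mcal{S}_F\times(S^1)^{\dim F}$ by Theorem~\ref{CKO_topologyoffiber} and the trivialization~\eqref{equ_fiberbundleij}. Since $g_{i,j}\subseteq\overline{g}\subseteq\overline{f_1}$ lies in the stratum $\{u_{1,1}=u_{1,2}=u_{2,1}=\lambda_2\}$ over which $\mcal{X}_0$ is singular, near $g_{i,j}$ the system $\Phi_\lambda$ is modeled --- through the Nishinou--Nohara--Ueda toric degeneration $\psi_{1,t}$ of Theorem~\ref{theorem_toricdegenerations} and the iterated-bundle description of \cite{ChoKimOhLag} --- on a product of a smooth-toric piece with a single conifold-type piece. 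Indeed, exactly as in the proof sketch of Theorem~\ref{CKO_topologyoffiber}, the width-one property of $\Gamma_n$ forces at most one nontrivial stage: the smoothing of a conifold-type singularity carrying the largest sphere $\mcal{S}'_{i,j}=S^{2m'+1}$ that occurs among the $\mcal{S}_F$ over the open star of $g_{i,j}$ (and no such stage when all those $\mcal{S}_F$ are points). Such a smoothing is homotopy equivalent to $\mcal{S}'_{i,j}$.

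Using this I would assemble the retraction. Over each stratum one has $\Phi_\lambda^{-1}(\mathring F\cap U_{i,j})\cong\mcal{S}_F\times(S^1)^{\dim F}\times\R^{\dim F}$. Since $g_{i,j}\subseteq\overline{g}\subseteq\{u_{1,n}=\lambda_1\}$, for every $F\supseteq g_{i,j}$ the face $F\cap\{u_{1,n}=\lambda_1\}$ again lies in the open star of $g_{i,j}$; moving toward it collapses the circle conjugate to $u_{1,n}$, moving further toward $g_{i,j}$ collapses the remaining excess circles of the torus factor, and passing through the conifold stage collapses the excess of the sphere onto its $\mcal{S}'_{i,j}$-core. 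Gluing these partial collapses --- here the local models of \cite{NishinouNoharaUeda,ChoKimOhLag} are used to control the behavior near the singular locus $\Phi_0^{-1}(\overline{f_1})$, organized by a Mayer--Vietoris induction over the strata --- gives $\mathscr{U}_{i,j}\simeq T^{\dim g_{i,j}}\times\mcal{S}'_{i,j}$. It remains to verify $\dim g_{i,j}+\dim\mcal{S}'_{i,j}\le 2n-2$: if $\mcal{S}_F=S^{2m'+1}$ for some $F$ in the open star then $F$ satisfies Condition $(m')$ of~\eqref{equ_conditionsphericalfact}, so $g_{i,j}\subseteq\overline{F}\subseteq\{u_{1,m'+1}=\cdots=u_{1,1}=u_{2,1}=\cdots=u_{m'+1,1}=\lambda_2\}$; since $\lambda_1\neq\lambda_2$ we have $m'+1<n$, so $\{u_{1,n}=\lambda_1\}$ is an equation independent of those, whence $\dim g_{i,j}\le(2n-1)-(2m'+1)-1=2n-3-2m'$ and $\dim g_{i,j}+(2m'+1)\le 2n-2$ (the case $\mcal{S}'_{i,j}=\mathrm{point}$ being weaker still). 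Thus $H_\ell(\mathscr{U}_{i,j};\Z)=0$ for all $\ell>2n-2$, in particular for $\ell\ge 4n-7$.

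The step I expect to be the main obstacle is the construction of the deformation retraction, that is, upgrading the heuristic ``collapse the $u_{1,n}$-circle, the remaining excess torus circles, and the sphere'' to an honest deformation retraction in a neighborhood of the singular locus, and checking that the local models of \cite{NishinouNoharaUeda,ChoKimOhLag} really glue to a single conifold-type stage over the entire open star of $g_{i,j}$, uniformly as $g_{i,j}$ runs over the faces of $g$ --- so that no extra homology survives in degrees $\ge 4n-7$.
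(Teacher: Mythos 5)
Your overall strategy matches the paper's: retract $\mathscr{U}_{i,j}$ onto a small core lying over $g_{i,j}$, then bound the homological dimension of that core. Two differences are worth noting. First, your sharper bound $\dim \le 2n-2$, obtained by observing that $g_{i,j}\subseteq\{u_{1,n}=\lambda_1\}$ imposes a constraint independent of the equalities in Condition $(m')$, is a genuine refinement: the paper only records the isotropy bound $\dim(\mcal{S}_{i,j}\times T^i)\le 2n-1$, which just fails to settle $n=3$ directly and forces a separate explicit check. Second, you invoke the Nishinou--Nohara--Ueda degeneration and conifold local models, whereas the paper's proof bypasses all of that machinery: it contracts $U_{i,j}$ strata-wise onto a single point $\mathbf{u}_{i,j}\in g_{i,j}$ and lifts this retraction to $\mathscr{U}_{i,j}$ using only the trivialization of $\Phi_\lambda$ over each stratum from \cite[Lemma 6.13]{ChoKimSO}.

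However, your identification of the core is not quite right, and this is a genuine gap (albeit one that happens not to affect the stated conclusion). You claim $\mathscr{U}_{i,j}\simeq T^{\dim g_{i,j}}\times\mcal{S}'_{i,j}$, where $\mcal{S}'_{i,j}$ is the \emph{largest} sphere factor occurring among the $\mcal{S}_F$ over the open star of $g_{i,j}$. The paper shows instead that $\mathscr{U}_{i,j}$ retracts onto the \emph{fiber over $\mathbf{u}_{i,j}$ itself}, namely $\mcal{S}_{i,j}\times T^{\dim g_{i,j}}$, and $\mcal{S}_{i,j}$ can be strictly smaller than $\mcal{S}'_{i,j}$: the strict inequalities in Condition $(j')$ can fail on a proper face of $g$ while holding on an adjacent face in its open star. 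For $n=3$ this already happens at the vertex $g_{0,1}=\{u_{3,1}=0\}\cap g$, where $\mcal{S}_{0,1}$ is a point (so $\Phi^{-1}_\lambda(\mathbf{u}_{0,1})$ is a single point and $\mathscr{U}_{0,1}$ is contractible) while the adjacent edge $g_{1,1}$ carries $S^3$-factors, so your formula would wrongly give $\mathscr{U}_{0,1}\simeq S^3$. The intuition ``the smoothing of a conifold is homotopy equivalent to the vanishing sphere'' applies at a fixed smoothing parameter, but here one shrinks the neighborhood in the \emph{base}; as $U_{i,j}$ collapses to $\mathbf{u}_{i,j}$ the nearby $S^3$'s shrink onto the (possibly smaller) $\mcal{S}_{i,j}$. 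Because $\dim\mcal{S}_{i,j}\le\dim\mcal{S}'_{i,j}$ always, your dimension estimate is an overestimate and the desired vanishing still follows, so the Lemma is recovered; but the asserted homotopy type is not, and the retraction you flag as the ``main obstacle'' would in fact land on $\mcal{S}_{i,j}\times T^{\dim g_{i,j}}$ rather than on $T^{\dim g_{i,j}}\times\mcal{S}'_{i,j}$.
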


\begin{proof}
We claim that the fiber $\Phi^{-1}_\lambda (\mathbf{u}_{i,j})$ at a point $\mathbf{u}_{i,j}$ of $g_{i,j}$ is a deformation retract of $\mathscr{U}_{i,j}$. First, the set $U_{i,j}$ in~\eqref{equuij} can be contracted to the point $\mathbf{u}_{i,j}$ strata-wisely as depicted in Figure~\ref{stratawise}~(a). For $\ell = 0, 1, 2, \cdots$, and $n-i$, we set
$$
U^{(\ell)}_{i,j} \coloneqq U_{i,j} \cap \left( g_{i,j} \cup \left(\cup_{\spadesuit_\ell} f_{r,s} \right)\right)
$$
where the union $\cup_{\spadesuit_\ell}$ runs over all faces $f_{r,s}$ of $f$ such that $\overline{f_{r,s}} \cap g_{i,j} \neq \emptyset$ and $\mathrm{codim} \, f_{r,s} \geq \ell$. Note that $U_{i,j} = U_{i,j}^{(0)}$. By collapsing the strata of codimension $\ell$, we obtain a deformation retraction of $U^{(\ell)}_{i,j}$ into $U^{(\ell+1)}_{i,j}$. Proceeding inductively, we end up obtaining the stratum $U^{(n-i)}_{i,j} = g_{i,j}$. As $g_{i,j} \simeq \R^{i}$, it can be contracted to the point $\mathbf{u}_{i,j}$. Next, we now lift them to the total space $\mathscr{U}_{i,j}$ of $\Phi_\lambda$. The deformation retraction of  $U^{(\ell)}_{i,j}$ to $U^{(\ell+1)}_{i,j}$ together with the trivialization of $\Phi_\lambda$ over each stratum of codimension $\ell$ leads to a deformation retraction of $\Phi_\lambda^{-1} (U^{(\ell)}_{i,j})$ to $\Phi_\lambda^{-1} (U^{(\ell+1)}_{i,j})$. Then the fiber $\Phi^{-1}_\lambda (\mathbf{u}_{i,j})$ is a deformation retract of $\mathscr{G}_{i,j}$ by~\eqref{equ_inverseimage}.
 
By Theorem~\ref{CKO_topologyoffiber}, $\Phi^{-1}_\lambda (\mathbf{u}_{i,j})$ is diffeomorphic to $\mathcal{S}_{i,j} \times T^i$ where $\mathcal{S}_{i,j}$ is either a single point or an odd dimensional sphere. Therefore, 
$$
H_\ell (\mathscr{U}_{i,j} ; \Z) \simeq H_\ell (\mcal{S}_{i,j} \times T^i ; \Z).
$$
Since every fiber is an isotropic submanifold by Theorem~\ref{CKO_topologyoffiber}, we have the inequality$\colon$
\begin{equation}\label{dimsijt}
\dim (\mathcal{S}_{i,j}) + i \leq 2n-1.
\end{equation}
If $n > 3$, then $\dim \mcal{S}_{i,j} \times T^i \leq 2n - 1 < 4n - 7$ by~\eqref{dimsijt} and hence~\eqref{equ_huij0} follows. If $n = 3$, then the face $f$ is contained in the hyperplane given by $u_{1,3} = 6$. Also, the face $g$ is a line segment and can be expressed as the union of three faces
$$
g_{0,1} = \{ \mathbf{u} \in f \mid u_{3,1} = 0 \}, \,g_{0,2} = \{  \mathbf{u} \in f \mid u_{3,1} = -6 \}, \, g_{1,1} = \{  \mathbf{u} \in f \mid -6 < u_{3,1} < 0 \}.
$$
The corresponding factors $\mathcal{S}_{i,j}$ for $\Phi^{-1}_\lambda (\mathbf{u}_{i,j})$ in Theorem~\ref{CKO_topologyoffiber} are
\begin{itemize}
\item $\mathcal{S}_{0,1}$ is a point and 
\item $\mathcal{S}_{0,2} \simeq \mathcal{S}_{1,1} \simeq S^3$. 
\end{itemize}
Therefore,~\eqref{equ_huij0} holds for the case where $n = 3$ as well. 
\begin{figure}[ht]
	\scalebox{1}{
\begingroup%
  \makeatletter%
  \providecommand\color[2][]{%
    \errmessage{(Inkscape) Color is used for the text in Inkscape, but the package 'color.sty' is not loaded}%
    \renewcommand\color[2][]{}%
  }%
  \providecommand\transparent[1]{%
    \errmessage{(Inkscape) Transparency is used (non-zero) for the text in Inkscape, but the package 'transparent.sty' is not loaded}%
    \renewcommand\transparent[1]{}%
  }%
  \providecommand\rotatebox[2]{#2}%
  \newcommand*\fsize{\dimexpr\f@size pt\relax}%
  \newcommand*\lineheight[1]{\fontsize{\fsize}{#1\fsize}\selectfont}%
  \ifx\svgwidth\undefined%
    \setlength{\unitlength}{976.86626503bp}%
    \ifx\svgscale\undefined%
      \relax%
    \else%
      \setlength{\unitlength}{\unitlength * \real{\svgscale}}%
    \fi%
  \else%
    \setlength{\unitlength}{\svgwidth}%
  \fi%
  \global\let\svgwidth\undefined%
  \global\let\svgscale\undefined%
  \makeatother%
  \begin{picture}(1,0.0928991)%
    \lineheight{1}%
    \setlength\tabcolsep{0pt}%
    \put(0,0){\includegraphics[width=\unitlength,page=1]{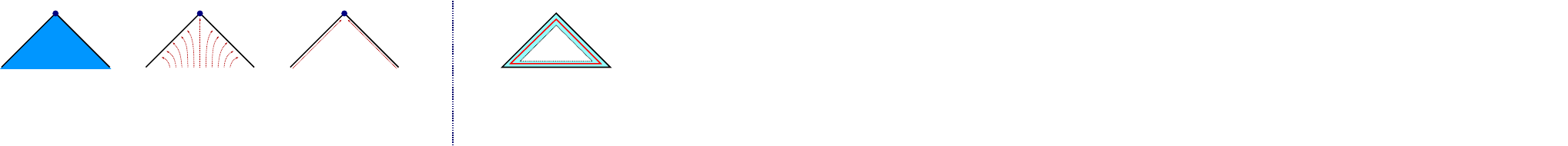}}%
    \put(0.34872259,0.02956402){\color[rgb]{0,0,0}\makebox(0,0)[lt]{\lineheight{1.25}\smash{\begin{tabular}[t]{l}$R_{i,j}$\end{tabular}}}}%
    \put(0,0){\includegraphics[width=\unitlength,page=2]{fig_def.pdf}}%
    \put(0.34872259,0.00960223){\color[rgb]{0,0,0}\makebox(0,0)[lt]{\lineheight{1.25}\smash{\begin{tabular}[t]{l}$U^\prec_{i,j} \cap U_{i,j} \cap g_{i,j}$\end{tabular}}}}%
    \put(0,0){\includegraphics[width=\unitlength,page=3]{fig_def.pdf}}%
    \put(0.02779841,0.02035088){\color[rgb]{0,0,0}\makebox(0,0)[lt]{\lineheight{1.25}\smash{\begin{tabular}[t]{l}(a) Strata-wise deformation retraction\end{tabular}}}}%
    \put(0.29571275,0.02006763){\color[rgb]{0,0,0}\makebox(0,0)[lt]{\lineheight{1.25}\smash{\begin{tabular}[t]{l}(b)\end{tabular}}}}%
  \end{picture}%
\endgroup%
}
	\caption{\label{stratawise} Strata-wise deformation retraction and choice of $R_{i,j}$}	
\end{figure}
\end{proof}

Taking a sufficiently small positive number $\epsilon$ for $U_{i,j}$'s, we may assume that there is an open ball $V_{i,j}$ around the center $\mathbf{u}_{i,j}$ of $g_{i,j}$ such that $V_{i,j}$ and $U_{i,j}^{\prec}$ are disjoint. We inductively set
\begin{itemize}
\item $U_{1,1}^{\prec} \coloneqq \bigcup_{j=1}^{\kappa_0} U_{0,j} \mbox{ and } \mathscr{U}_{1,1}^{\prec} \coloneqq \bigcup_{j=1}^{\kappa_0} \mathscr{U}_{0,j},
$
\item $
U_{i,j+1}^{\prec} \coloneqq U_{i,j}^{\prec} \cup U_{i,j} \mbox{ and } \mathscr{U}_{i,j+1}^{\prec} \coloneqq \mathscr{U}_{i,j}^{\prec} \cup \mathscr{U}_{i,j}
$ if $j < \kappa_i$,
\item $
U_{i+1,1}^{\prec} \coloneqq U_{i,j}^{\prec} \cup U_{i+1,1} \mbox{ and } \mathscr{U}_{i+1,1}^{\prec} \coloneqq \mathscr{U}_{i,j}^{\prec} \cup \mathscr{U}_{i+1,1}
$
if $j = \kappa_i$.
\end{itemize}
To compute homology groups of $\mathscr{U}_{i,j+1}^{\prec}$, let us compute homology groups of their intersection.

\begin{lemma}\label{lemma_Hofinter}
For every $n \geq 3$, we have
\begin{equation}\label{equ_vanishinginter}
H_\ell \left(\mathscr{U}_{i,j}^{\prec} \cap \mathscr{U}_{i,j}; \Z \right) = 0 \mbox{ for all $\ell \geq 4n - 7$}.
\end{equation}
\end{lemma}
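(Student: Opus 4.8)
The plan is to compute the homology of $\mathscr{U}_{i,j}^{\prec}\cap\mathscr{U}_{i,j}=\Phi_\lambda^{-1}(W)$, where $W\coloneqq U_{i,j}^{\prec}\cap U_{i,j}$, by first producing a deformation retraction of $\Phi_\lambda^{-1}(W)$ onto $\Phi_\lambda^{-1}(R_{i,j})$ for $R_{i,j}\coloneqq W\cap g_{i,j}$, then identifying $R_{i,j}$ up to homotopy with a sphere, and finally trivializing $\Phi_\lambda$ over the stratum $g_{i,j}$ and running a K\"unneth-plus-dimension-count argument in the spirit of the proof of Lemma~\ref{lemma_homologygroupofuij}. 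Here $(i,j)$ runs over the faces of $g$ for which $\mathscr{U}_{i,j}^{\prec}$ has been defined, so in particular $\dim g_{i,j}=i\ge 1$.

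First I would set up the retraction. Since $U_{i,j}$ has been cut so as to lie in the open star of the closed face $\overline{g_{i,j}}$, the set $W$ meets only the relative interiors of faces $\tau$ of $\Delta_\lambda$ with $\overline{g_{i,j}}\subseteq\overline{\tau}$, together with $g_{i,j}$ itself. I would then run the strata-wise deformation retraction from the proof of Lemma~\ref{lemma_homologygroupofuij} --- collapsing the strata that meet $W$ in order of decreasing codimension towards $\overline{g_{i,j}}$, and lifting each collapse through the local trivialization of $\Phi_\lambda$ over the stratum in question (cf.\ \cite[Lemma 6.13]{ChoKimSO} and~\eqref{equ_fiberbundleij}) --- but stopping once the stratum $g_{i,j}$ is reached, without collapsing $g_{i,j}$ any further. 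This yields the desired retraction of $\Phi_\lambda^{-1}(W)$ onto $\Phi_\lambda^{-1}(R_{i,j})$; see Figure~\ref{stratawise}.

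Next I would identify $R_{i,j}$. A point $x\in g_{i,j}$ lies in $W$ exactly when it is within the chosen distance $\epsilon$ of some proper face $g_{r,s}$ of $\overline{g_{i,j}}$; every such face has dimension strictly smaller than $i$, hence $(r,s)\prec(i,j)$ and it contributes to $U_{i,j}^{\prec}$. Thus $R_{i,j}=\{\,x\in g_{i,j}\mid d_{\mathrm{Euc}}(x,\partial\overline{g_{i,j}})<\epsilon\,\}$ is an $\epsilon$-collar of the boundary of the convex polytope $\overline{g_{i,j}}$; for $\epsilon$ small this is a genuine collar and is homotopy equivalent to $\partial\overline{g_{i,j}}\simeq S^{i-1}$. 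By~\eqref{equ_fiberbundleij} and~\eqref{equ_inverseimage} the restriction $\Phi_\lambda\colon\Phi_\lambda^{-1}(g_{i,j})\to g_{i,j}$ is a trivial bundle with fiber $\mathcal{S}_{i,j}\times T^i$, where $\mathcal{S}_{i,j}$ is a point or an odd-dimensional sphere by Theorem~\ref{CKO_topologyoffiber}; restricting to the sub-base $R_{i,j}$ and using the retraction above gives
$$
H_\ell\!\left(\mathscr{U}_{i,j}^{\prec}\cap\mathscr{U}_{i,j};\Z\right)\simeq H_\ell\!\left(S^{i-1}\times\mathcal{S}_{i,j}\times T^i;\Z\right).
$$
Since $H_\ast(S^{i-1};\Z)$ is free, the K\"unneth Tor-terms vanish and the group is zero above degree $2i-1+\dim\mathcal{S}_{i,j}$. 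If $i\le 2n-6$, the isotropy inequality $\dim\mathcal{S}_{i,j}+i\le 2n-1$ of~\eqref{dimsijt} gives $2i-1+\dim\mathcal{S}_{i,j}\le(2n-1)+(i-1)\le 4n-8<4n-7$. The only remaining case is $i=2n-5$, in which $g_{i,j}$ is forced to be the relative interior of $g$ itself; there, by Theorem~\ref{CKO_topologyoffiber}, the generic fiber over $\mathring g$ carries a spherical factor $S^3$, so $\dim\mathcal{S}_{2n-5,1}=3$ and $2(2n-5)-1+3=4n-8<4n-7$. In every case the homology vanishes for $\ell\ge 4n-7$, which is the assertion.

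The hard part will be the first step: one has to check that the strata-wise deformation retraction of $W$ can be chosen compatibly with the face removals defining $U_{i,j}^{\prec}$ --- equivalently, that collapsing a higher stratum sends a point of $W$ lying near a proper face $g_{r,s}$ of $\overline{g_{i,j}}$ to another point of $W$ still lying near $g_{r,s}$ --- and that this base retraction lifts through the local trivializations of $\Phi_\lambda$, so that $\Phi_\lambda^{-1}(W)$ genuinely retracts onto $\Phi_\lambda^{-1}(R_{i,j})$. Once that is settled, everything else is bookkeeping with Theorem~\ref{CKO_topologyoffiber},~\eqref{equ_fiberbundleij}, and the K\"unneth formula.
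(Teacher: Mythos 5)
Your proposal is correct and follows essentially the same route as the paper: deformation-retract $\mathscr{U}_{i,j}^{\prec}\cap\mathscr{U}_{i,j}$ onto the preimage of an $\epsilon$-collar of $\partial\overline{g_{i,j}}$ in $g_{i,j}$, identify that preimage with $S^{i-1}\times\mathcal{S}_{i,j}\times T^i$ via the trivialization of $\Phi_\lambda$ over $g_{i,j}$, and finish with a dimension count using~\eqref{dimsijt}. Two remarks. First, the step you flag as ``the hard part'' is handled in the paper not by rebuilding the strata-wise retraction directly on $W$, but by restricting to $\mathscr{U}_{i,j}^{\prec}\cap\mathscr{U}_{i,j}$ the deformation retraction of $\mathscr{U}_{i,j}$ onto $\mathscr{G}_{i,j}$ already constructed in the proof of Lemma~\ref{lemma_homologygroupofuij}; for $\epsilon$ small this restriction lands in $\mathscr{R}_{i,j}$ (the paper takes $R_{i,j}$ to be a fixed $(i-1)$-sphere of large radius inside your collar, which is homotopy equivalent), so the compatibility of the collapses with the face removals need not be re-verified from scratch. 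Second, your dimension count is actually tighter than the one written in the paper: the paper's displayed chain of inequalities only yields $\dim\mathscr{R}_{i,j}\le 4n-7$, which by itself gives vanishing only for $\ell>4n-7$, not at $\ell=4n-7$. Your case split --- $i\le 2n-6$ via the isotropy bound, and $i=2n-5$ via $\mathcal{S}_{2n-5,1}=S^3$ --- gives the sharper bound $\dim\mathscr{R}_{i,j}\le 4n-8$, which is precisely what is needed to conclude $H_\ell=0$ for all $\ell\ge 4n-7$. So your argument is both correct and slightly more complete at the boundary case.
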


\begin{proof}
Under the identification $g_{i,j} \simeq \R^{i}$ via a homeomorphism, we may take an $(i-1)$-dimensional sphere ${R}_{i,j}$ in~\eqref{equuij} with sufficiently large radius such that ${R}_{i,j}$ is contained in $U_{i,j}^\prec \cap U_{i,j} \cap g_{i,j}$, see Figure~\ref{stratawise}~(b). Let $\scr{R}_{i,j} \coloneqq \Phi_\lambda^{-1} (R_{i,j})$. Because $\epsilon$ is sufficiently small, the deformation retraction of $\mathscr{U}_{i,j}$ to $\mathscr{G}_{i,j}$ in Lemma~\ref{lemma_homologygroupofuij} induces that of $\mathscr{U}_{i,j}^{\prec} \cap \mathscr{U}_{i,j}$ to $\mathscr{R}_{i,j}$. In particular, $\mathscr{U}_{i,j}^{\prec} \cap \mathscr{U}_{i,j}$ is homotopy equivalent to $\mathscr{R}_{i,j}$. The trivialization of $\Phi_\lambda$ over $g_{i,j}$ yields that
$$
\mathscr{R}_{i,j} \simeq {R}_{i,j} \times \Phi^{-1}_\lambda (\mathbf{u}_{i,j}) \simeq S^{i-1} \times \mcal{S}_{i,j} \times T^i
$$
where the corresponding factor $\mathcal{S}_{i,j}$ for $\Phi^{-1}_\lambda (\mathbf{u}_{i,j})$ is in Theorem~\ref{CKO_topologyoffiber}.  Then
$$
\dim (S^{i-1} \times (\mcal{S}_{i,j} \times T^i) ) \leq (i - 1) + (2n - 1) \leq (2n - 5 - 1) + (2n - 1).
$$
Here, the first inequality follows from~\eqref{dimsijt} and the second inequality follows from the dimension count $i = \dim g_{i,j} \leq \dim g = \dim f - 3 = (2n - 2) - 3$. Therefore,~\eqref{equ_vanishinginter} follows. 
\end{proof}

\begin{proposition}\label{proposition_homologyvanishing}
For every $n \geq 3$ and each double index $(i,j)$, we have
\begin{equation}\label{equ_hellij}
H_\ell \left(\mathscr{U}_{i,j}^{\prec}; \Z \right) = 0 \mbox{ for all $\ell \geq 4n - 6$}.
\end{equation}
\end{proposition}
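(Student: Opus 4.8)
The plan is an induction on the double index $(i,j)$, run in the order in which the sets $\mathscr{U}_{i,j}^{\prec}$ were constructed, gluing in one stratum neighborhood $\mathscr{U}_{i,j}$ at a time and feeding the two vanishing statements Lemma~\ref{lemma_homologygroupofuij} and Lemma~\ref{lemma_Hofinter} into the Mayer--Vietoris sequence. Thus the only tools are the two preceding lemmas together with the long exact Mayer--Vietoris sequence for a cover by two open sets; no new geometry is needed.

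\emph{Base case.} The faces $g_{0,1},\dots,g_{0,\kappa_0}$ are the distinct vertices of $g$, and $\epsilon$ was chosen sufficiently small, so the neighborhoods $U_{0,j}$ are pairwise disjoint; hence so are the sets $\mathscr{U}_{0,j}=\Phi_\lambda^{-1}(U_{0,j})$. Therefore $H_\ell(\mathscr{U}_{1,1}^{\prec};\Z)\simeq\bigoplus_{j=1}^{\kappa_0}H_\ell(\mathscr{U}_{0,j};\Z)$, and each summand vanishes for $\ell\geq 4n-7$ by Lemma~\ref{lemma_homologygroupofuij}; in particular~\eqref{equ_hellij} holds for the index $(1,1)$.

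\emph{Inductive step.} Assume~\eqref{equ_hellij} holds for $(i,j)$. By construction the next set in the list, call it $\mathscr{U}^{\prec}_{\mathrm{next}}$ (it equals $\mathscr{U}_{i,j+1}^{\prec}$ when $j<\kappa_i$ and $\mathscr{U}_{i+1,1}^{\prec}$ when $j=\kappa_i$), is the union of the two open sets $\mathscr{U}_{i,j}^{\prec}$ and $\mathscr{U}_{i,j}$. The Mayer--Vietoris sequence for this cover reads, in degree $\ell$,
$$
H_\ell(\mathscr{U}_{i,j}^{\prec};\Z)\oplus H_\ell(\mathscr{U}_{i,j};\Z)\longrightarrow H_\ell(\mathscr{U}^{\prec}_{\mathrm{next}};\Z)\longrightarrow H_{\ell-1}(\mathscr{U}_{i,j}^{\prec}\cap\mathscr{U}_{i,j};\Z).
$$
Fix $\ell\geq 4n-6$. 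Then $H_\ell(\mathscr{U}_{i,j}^{\prec};\Z)=0$ by the inductive hypothesis, $H_\ell(\mathscr{U}_{i,j};\Z)=0$ by Lemma~\ref{lemma_homologygroupofuij} (since $\ell\geq 4n-6\geq 4n-7$), and $H_{\ell-1}(\mathscr{U}_{i,j}^{\prec}\cap\mathscr{U}_{i,j};\Z)=0$ by Lemma~\ref{lemma_Hofinter} (since $\ell-1\geq 4n-7$). Exactness forces $H_\ell(\mathscr{U}^{\prec}_{\mathrm{next}};\Z)=0$, which closes the induction and establishes~\eqref{equ_hellij}.

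I do not expect a genuine obstacle here: all the analytic and topological content has been isolated in Lemmas~\ref{lemma_homologygroupofuij} and~\ref{lemma_Hofinter}, and what remains is bookkeeping. The two points requiring a little care are that the vertex neighborhoods really are disjoint for small $\epsilon$ (so the base case splits as a direct sum) and that the connecting homomorphism in Mayer--Vietoris drops the degree by exactly one, so that the range $\ell\geq 4n-7$ in Lemma~\ref{lemma_Hofinter} dovetails precisely with the target range $\ell\geq 4n-6$. Finally, applying the same Mayer--Vietoris argument once more to the cover of $\Phi_\lambda^{-1}(g)\supseteq\Omega_\varphi$ by $\mathscr{U}^{\prec}$ for the last index together with the corresponding $\mathscr{U}_{i,j}$ produces an open neighborhood $\mathscr{U}$ of $\Omega_\varphi$ with $H_\ell(\mathscr{U};\Z)=0$ for all $\ell\geq 4n-6$, hence a fortiori for all $\ell>\dim_\R M-2$; this is exactly condition~\eqref{equ_HellU} and completes the proof of Proposition~\ref{proposition_pseudocycle}.
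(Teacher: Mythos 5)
Your proof is correct and follows essentially the same route as the paper: induction on the construction order of $\mathscr{U}_{i,j}^\prec$, with the base case handled via disjointness of the vertex neighborhoods and the inductive step via the Mayer--Vietoris sequence, feeding in the vanishing ranges from Lemma~\ref{lemma_homologygroupofuij} and Lemma~\ref{lemma_Hofinter} exactly as the paper does. The small difference — invoking Lemma~\ref{lemma_homologygroupofuij} for each $\mathscr{U}_{0,j}$ rather than quoting the homotopy type of $\mathscr{U}_{1,1}^\prec$ directly — is immaterial, and your closing remark about gluing in the final stratum to complete Proposition~\ref{proposition_pseudocycle} is a welcome bit of extra care.
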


\begin{proof}
We use the mathematical induction on $\prec$. For the base step, observe that $\mathscr{U}_{1,1}^{\prec}$ is homotopy equivalent to $\bigcup_{j=1}^{\kappa_0} \mathcal{S}_{0, j}$ where $\mathcal{S}_{0, j}$ is the homeomorphic type of the fiber at the vertex $g_{0,j}$. Thus, $H_\ell (\mathscr{U}_{i,j}^{\prec}; \Z ) = 0$ for all $\ell \geq 4n - 6$. 

As an induction hypothesis, assume that $\mathscr{U}^{\prec}_{i,j}$ satisfies~\eqref{equ_hellij}. By the Mayer--Vietoris sequence to the pair $(\mathscr{U}^{\prec}_{i,j+1}, \mathscr{U}_{i,j+1})$ (or $(\mathscr{U}^{\prec}_{i+1,1}, \mathscr{U}_{i+1,j})$ if $j = \kappa_i$),~\eqref{equ_hellij} follows from Lemma~\ref{lemma_homologygroupofuij} and~\ref{lemma_Hofinter}.
\end{proof}

\begin{proof}[Proof of Proposition~\ref{proposition_pseudocycle}]
The union $\mathscr{U} \coloneqq \bigcup_{i=0}^{2n-5} \bigcup_{j=1}^{\kappa_i} \mathscr{U}^{\prec}_{i,j}$ is an open neighborhood containing $\Phi^{-1}_\lambda (g)$ satisfying the desired vanishing properties on homology groups by Proposition~\ref{proposition_homologyvanishing}.
\end{proof}

\section{Non-displaceable Lagrangian torus fibers}\label{Sec_bulkdeformation}

The goal of this section is to complete the proof of Theorem~\ref{theorem_main}. After reviewing the bulk-deformation of Lagrangian Floer theory, we find a bulk-parameter making the bulk-deformed Floer cohomology of the Gelfand--Zeitlin fiber over each point in the line segment $I_n(t)$ $(0 < t \leq 1)$ non-vanishing.

Let $\Lambda$ be the Novikov field defined by 
$$
\Lambda \coloneqq \left\{ \sum_{i=1}^\infty a_i T^{\mu_i} \mid a_i \in \C, \mu_i \in \R, \lim_{i \to \infty} \mu_i = \infty \right\}
$$
where $T$ is a formal parameter. For a nonzero element $x \in \Lambda \backslash \{0\}$, the valuation $\frak{v}_T (x)$ is defined by the minimal number $\mu \in \R$ such that the complex part of $T^{-\mu} x$ is non-zero. Let
\begin{itemize}
\item $\mathrm{U}(\Lambda) \coloneqq \frak{v}_T^{-1}(0)$ be the set of unitary elements of $\Lambda$,
\item $\Lambda_{0} \coloneqq \left\{ \sum_{i=1}^\infty a_i T^{\mu_i} \in \Lambda \mid \mu_i \geq 0 \right\} = \{0\} \cup \frak{v}_T^{-1}([0, \infty))$.
\item for $a \in \R$, $\Lambda_{>a} \coloneqq \frak{v}_T^{-1}((a, \infty))$.
\end{itemize}

Let $(X, \omega)$ be a symplectic manifold and $L$ a Lagrangian submanifold. By the work of Fukaya--Oh--Ohta--Ono \cite{FukayaCyclic, FOOOToric1, FOOOToric2}, one can associate the $A_\infty$-algebra 
$$
\left( \Omega(L),  \{ \frak{m}_k \colon \Omega(L)^{\otimes k} \to \Omega(L) \}_{k=0}^\infty \right)
$$ 
on the de Rham complex of $L$. Let $\mathcal{M}_{k+1, \ell}(L, \beta)$ be the moduli space of stable maps from a bordered Riemann surface of genus zero with $(k+1)$ boundary marked points $\{z_0, z_1, \cdots, z_k\}$ respecting the counter-clockwise orientation and $\ell$ interior marked points $\{z_1^+, \cdots, z_\ell^+\}$. The moduli space has the evaluation maps
$$
\begin{cases}
\mathrm{ev}_j^{\vphantom{+}} \colon \mathcal{M}_{k+1, \ell}(L, \beta) \to L, &\quad \varphi \mapsto \varphi (z_j), \\
\mathrm{ev}_j^+ \colon \mathcal{M}_{k+1, \ell}(L, \beta) \to L, &\quad \varphi \mapsto \varphi (z^+_j).
\end{cases}
$$
For each $k = 0, 1, 2, \cdots$ and $\beta \in \pi_2(X, L)$, the map $\frak{m}_{k, \beta}$ is constructed via the smooth correspondence$\colon$
\begin{equation}\label{smoothcorrespondence}
	\xymatrix{
                              & {\mcal{M}_{k+1, 0}(L, \beta)} \ar[dl]_{\mathbf{ev}_{\geq 1}} \ar[dr]^{\mathrm{ev}_0} &
      \\
 L^k & & L}
\end{equation}
where $\mathbf{ev}_{\geq 1} = (\mathrm{ev}_1, \mathrm{ev}_2, \cdots, \mathrm{ev}_k)$. Namely, letting $\mathrm{pr}_j \colon L^k \to L$ be the projection to the $j$-th factor, we define
$$
\mathfrak{m}_{k,\beta}(b_1, \cdots, b_k) = (\mathrm{ev}_{0})_! \left( \mathbf{ev}_{\geq 1}^* \left(\mathrm{pr}_1^* b_1 \wedge \cdots \wedge \mathrm{pr}_k^* b_k \right) \right).
$$
The structure map $\mathfrak{m}_k$ is defined by
$$
\mathfrak{m}_k = \sum_{\beta} \mathfrak{m}_{k, \beta} \cdot T^{\omega (\beta) / 2 \pi},
$$
see \cite[Section 16]{FOOOToric1} for details.

For cycles $b \in H^1(L; \Lambda_0)$ and $\frak{b} \in H^2(X; \Lambda_0)$, the structure map of the $A_\infty$-algebra can be deformed by adding extra marked points to the moduli space ${\mcal{M}_{k+1, 0}(L, \beta)}$ of stable maps and requiring to pass through $\frak{b}$ at the interior marked points and $b$ at the extra boundary marked points. The smooth correspondence gives rise to a deformation of the map $\mathfrak{m}_{k, \beta}$ and $\mathfrak{m}_{k}$.  The structure map for the resulting deformed $A_\infty$-algebra is denoted by $\mathfrak{m}_k^{\frak{b}, b}$. If the output of $\mathfrak{m}^{\frak{b}, b}_0 (1)$ is a multiple of the fundamental cycle $\mathrm[PD(L)]$, then the disk potential function is defined by
$$
\mathfrak{m}^{\frak{b}, b}_0 (1) = \mathfrak{m}^\frak{b} ({e^b}) = \frak{PO}^\frak{b}(b) \cdot \mathrm[PD(L)].
$$
In the case where $\frak{b}$ is a linear combination of ambient cycles of real codimension two that do not intersect the Lagrangian $L$, the divisor axiom can be applied. Together with the compatibility of forgetful maps at the boundary marked points, the bulk-deformed disk potential of $L$ can be expressed as the following form$\colon$
\begin{equation}\label{equ_pob}
\frak{PO}^\frak{b} (b) = \sum_{\beta} n_\beta \cdot \exp (\beta \cap \frak{b}) \cdot \exp (\partial \beta \cap b) \cdot T^{\omega(\beta) / 2 \pi}
\end{equation}
where $n_\beta$ is the degree of the evaluation map after choosing orientation and spin structure. The reader is referred to \cite{FOOOToric2} for details.

We now deal with the Gelfand--Zeitlin case. For each nonzero $s$, the map $\psi_{1,s}$ in~\eqref{equ_commutingdiagramgrad} is given by a Hamiltonian flow and hence a Lagrangian submanifold $L$ in $\mathcal{X}_1$ is non-displaceable if and only if the image $\psi_{1,s}(L)$ is non-displaceable in ${X}_s$. From now on, we assume that $s$ is sufficiently close to the zero. Let $L_n(t) \coloneqq \Phi_\lambda^{-1} \left( I_n(t) \right)$ be the fiber over the point $I_n(t)$ in~\eqref{equ_Int}. By abuse of notation, we shall omit $\psi_{1,s}$ for simplicity. For instance, the image $\psi_{1,s}( L_n(t))$ is denoted by $L_n(t)$.

We denote by $\scr{D}$ (resp. $\underline{\scr{D}}$) the cycle corresponding to a pseudo-cycle $\varphi$ (resp. $\underline{\varphi}$) in Lemma~\ref{theorempsudocycle} and Proposition~\ref{proposition_pseudocycle}. Again for simplicity, the pushforward $\psi_{1,s,*} (\scr{D})$ in $X_s$ is also denoted by  $\scr{D}$. Taking a bulk-parameter of the form
\begin{equation}\label{equ_bulkparam}
\frak{b} = \frak{c} \cdot \scr{D} + \underline{\frak{c}} \cdot \underline{\scr{D}} \quad \mbox{ ($\frak{c}$ and $\underline{\frak{c}} \in \Lambda_0$)},
\end{equation}
for each $1$-cochain $b$, the obstruction $\frak{m}^{\frak{b},b}_0(1)$ is a multiple of the fundamental cycle, see \cite[Section 5]{ChoKimOhLGbulk}. 

We want to compute the bulk-deformed disk potential $\frak{PO}^\frak{b} (b)$ of $L_n(t)$. Recall that each component $\Phi_{i,j}$ generates a Hamiltonian circle action on $L_n(t)$. Let $\vartheta_{i,j}$ be an oriented loop in $L_n(t)$. We then take a coordinate system 
\begin{equation}\label{equ_yij}
\{ y_{i,j} \coloneqq \exp \left( \vartheta_{i,j} \cap b \right) \in \mathrm{U}(\Lambda) \}
\end{equation}
on $H^1(L; \Lambda_0)$. The expression of $\frak{PO}^\frak{b} (b)$ restricted to $H^1(L; \Lambda_0)$ in terms of $y_{i,j}$'s is a Laurent series, which is denoted by $W_\mathfrak{b}(L_n(t), {\bf{y}})$ (or $W_\mathfrak{b}({\bf{y}})$ for simplicity). The series $W_\mathfrak{b}({\bf{y}})$ is also called the \emph{bulk-deformed disk potential function} of $L_n(t)$.

\begin{proposition} The bulk-deformed potential function of $L_n(t)$ is 
\begin{equation}\label{equ_potentialfl1nn1}
\begin{split}
W_\mathfrak{b}(L_n(t), {\bf{y}}) &= \left( \frac{y_{1,2}}{y_{1,1}} + y_{1,2} + \frac{y_{1,1}}{y_{2,1}} + \frac{1}{y_{2,1}} \right) T^{(n-1)(1-t)} \\
& + \left( c \cdot \frac{1}{y_{1,n}} + \frac{y_{1,n}}{y_{1,n-1}} + \cdots +\frac{y_{1,3}}{y_{1,2}} +  \underline{c} \cdot {y_{n,1}} + \frac{y_{n-1,1}}{y_{n,1}} + \cdots + \frac{y_{2,1}}{y_{3,1}} \right) T^{n-1+t}
\end{split}
\end{equation}
for $c \coloneqq \exp(\mathfrak{c})$ and $\underline{c} \coloneqq \exp(\mathfrak{\underline{c}}) \in \mathrm{U}(\Lambda)$.
\end{proposition}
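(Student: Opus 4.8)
The plan is to push the computation to the toric central fibre $\mcal{X}_0$ of the degeneration in Theorem~\ref{theorem_toricdegenerations}, read off there the classical potential as a sum over the facets of $\Delta_\lambda$, and then twist that sum by the bulk class $\frak b$ using the divisor axiom~\eqref{equ_pob}. Throughout we take $0<t<1$, so that $I_n(t)$ lies in the interior of $\Delta_\lambda$ and $L_n(t)\cong T^{2n-1}$; the endpoint $t=1$, where the fibre degenerates to $S^3\times T^{2n-4}$, is not treated here but is recovered afterwards by the limit argument of Lemma~\ref{lemma_limitnondisplace}.

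First I would use that $\psi_{1,s}$ is a Hamiltonian diffeomorphism for $s\neq0$, so it suffices to compute the bulk-deformed potential of $\psi_{1,s}(L_n(t))\subset\mcal{X}_s$; one then lets $s\to0$. Since $I_n(t)$ is an interior point, every Maslov-two holomorphic disk bounded by the fibre remains inside $\mcal{X}_0\setminus\mathrm{Sing}(\mcal{X}_0)$, where $\psi_{1,0}$ is a symplectomorphism by Theorem~\ref{theorem_toricdegenerations}; in particular no disk component slips into the singular stratum, and the degeneration analysis of \cite{NishinouNoharaUeda} applies. It matches the Maslov-two classes $\beta_F$ of $L_n(t)$ with the facets $F$ of $\Delta_\lambda$, each carrying $n_{\beta_F}=1$ for the standard orientation and spin structure, with $\omega(\beta_F)/2\pi$ equal to the integral affine distance $\ell_F(I_n(t))$ from $I_n(t)$ to $F$, and with $\partial\beta_F\in H_1(L_n(t);\Z)\cong\Z^{2n-1}$ the primitive inward normal of $F$ in the coordinates dual to the loops $\{\vartheta_{i,j}\}$. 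Hence, before bulk deformation, the potential equals the facet sum $\sum_F T^{\ell_F(I_n(t))}\,{\bf y}^{\partial\beta_F}$.

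Next I would extract the combinatorial data from the pattern~\eqref{equation_GZ-pattern}. The $2n+2$ facets split into four \emph{inner} ones, $\{u_{1,2}=u_{1,1}\}$, $\{u_{1,2}=\lambda_2\}$, $\{u_{1,1}=u_{2,1}\}$, $\{u_{2,1}=\lambda_2\}$, whose primitive inward normals give the monomials $y_{1,2}/y_{1,1}$, $y_{1,2}$, $y_{1,1}/y_{2,1}$, $1/y_{2,1}$, and $2n-2$ \emph{outer} ones --- the two endpoint facets $\{u_{1,n}=\lambda_1\}$ and $\{u_{n,1}=\lambda_3\}$ with monomials $1/y_{1,n}$ and $y_{n,1}$, together with the facets internal to the two chains $\lambda_1\ge u_{1,n}\ge\cdots\ge u_{1,2}\ge\lambda_2$ and $u_{1,1}\ge u_{2,1}\ge\cdots\ge u_{n,1}\ge\lambda_3$, whose monomials are $y_{1,k+1}/y_{1,k}$ and $y_{k,1}/y_{k+1,1}$ for $2\le k\le n-1$. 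A direct linear computation with the coordinates of $\mathbf{u}_0$ (Theorem~\ref{theroem_classificationmonotone}), those of $\mathbf{u}_1$ in~\eqref{equ_locationofu1}, and $I_n(t)=(1-t)\mathbf{u}_0+t\mathbf{u}_1$ shows $\ell_F(I_n(t))=(n-1)(1-t)$ for every inner facet and $\ell_F(I_n(t))=n-1+t$ for every outer facet; at $t=0$ all distances equal $n-1$, which is precisely the monotonicity recorded after~\eqref{equ_lambda123}.

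Finally I would switch on the bulk class. By~\eqref{equ_toricdivisors} and~\eqref{equ_fpsi10}, $\scr D$ (resp.\ $\underline{\scr D}$) is the cycle represented by the pseudocycle supported, in the toric picture, over the facet $\{u_{1,n}=\lambda_1\}$ (resp.\ $\{u_{n,1}=\lambda_3\}$), and away from $\mathrm{Sing}(\mcal{X}_0)$ it coincides with the toric divisor $\scr D_0=V(p_{n+1})$ (resp.\ $\underline{\scr D}_0=V(p_{\underline{n+1}})$). A Maslov-two toric disk $\beta_F$ meets the toric divisor over $F'$ in exactly $\delta_{F,F'}$ points, and since $t$ is close to $0$ these points lie in the smooth locus; hence $\beta_{\{u_{1,n}=\lambda_1\}}\cdot\scr D=\beta_{\{u_{n,1}=\lambda_3\}}\cdot\underline{\scr D}=1$, while all other intersection numbers among the $\beta_F$ and $\{\scr D,\underline{\scr D}\}$ vanish. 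As $L_n(t)$ is disjoint from $\scr D$ and $\underline{\scr D}$, the divisor axiom~\eqref{equ_pob} applied to $\frak b=\frak c\,\scr D+\underline{\frak c}\,\underline{\scr D}$ multiplies the $1/y_{1,n}$-term by $\exp(\frak c)=c$ and the $y_{n,1}$-term by $\exp(\underline{\frak c})=\underline c$, and leaves the remaining $2n$ terms untouched; grouping terms by their $T$-exponents $(n-1)(1-t)$ and $n-1+t$ then yields exactly~\eqref{equ_potentialfl1nn1}. The main obstacle is the second step: controlling the Maslov-two disk moduli of $\psi_{1,s}(L_n(t))$ as $s\to0$ past the \emph{singular} variety $\mcal{X}_0$ --- ruling out bubbling into $\mathrm{Sing}(\mcal{X}_0)$ and confirming that the count is the bare toric facet sum with all $n_{\beta_F}=1$ --- which is exactly where one exploits that $I_n(t)$ is interior for $0<t<1$, keeping all low-area disks in the locus where $\psi_{1,0}$ is a symplectomorphism, together with \cite{NishinouNoharaUeda}; a secondary technical point is legitimising the intersection computation with the pseudocycles $\scr D,\underline{\scr D}$ rather than with honest divisors, which works because the relevant disks avoid the singular loci entirely.
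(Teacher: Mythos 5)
Your proposal is correct and follows essentially the same route as the paper: both reduce the disk count to the toric picture on $\mcal{X}_0$ via the degeneration, cite Nishinou--Nohara--Ueda for the classification and counts of Maslov-two classes, observe that all such disks stay in the smooth locus where $\psi_{1,0}$ is a symplectomorphism, and apply the divisor axiom~\eqref{equ_pob} with $\frak b=\frak c\,\scr D+\underline{\frak c}\,\underline{\scr D}$, noting that $\scr D$ (resp.\ $\underline{\scr D}$) meets only the basic disk from the facet $f$ (resp.\ $\underline f$) with multiplicity one. The only cosmetic difference is that you derive the undeformed facet sum~\eqref{equ_potentialfl1nn2} explicitly by computing primitive inward normals and affine distances from $\mathbf u_0$, $\mathbf u_1$, whereas the paper simply quotes this formula from \cite{NishinouNoharaUeda}; the rest of the argument, including handling the pseudocycle issue by restricting to the smooth locus, is the same.
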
 

\begin{proof} 
We need to compute the expression~\eqref{equ_pob}. By the work of Nishinou--Nohara--Ueda \cite{NishinouNoharaUeda}, the effective classes of Maslov index two are classified and the counting invariants are computed. In particular, the disk potential function of $L_n(t)$ without bulk $\frak{b} = 0$ can be expressed as a Laurent polynomial with respect to the variables $y_{i,j}$ in~\eqref{equ_yij} as follows$\colon$
\begin{equation}\label{equ_potentialfl1nn2}
\begin{split}
W_0(L_n(t),{\bf{y}}) &= \left( \frac{y_{1,2}}{y_{1,1}} + y_{1,2} + \frac{y_{1,1}}{y_{2,1}} + \frac{1}{y_{2,1}} \right) T^{(n-1)(1-t)} \\
& + \left( \frac{1}{y_{1,n}} + \frac{ y_{1,n}}{y_{1,n-1}} + \cdots +\frac{y_{1,3}}{y_{1,2}} + {y_{n,1}} + \frac{y_{n-1,1}}{y_{n,1}} + \cdots + \frac{y_{2,1}}{y_{3,1}} \right) T^{n-1+t}.
\end{split}
\end{equation}

Turning on the bulk-parameter $\frak{b}$ in~\eqref{equ_bulkparam}, we compute the intersection numbers between $\scr{D}$ and disk classes $\beta$ of Maslov index two. For each effective class $\beta$ of Maslov index two, a holomorphic disk in $\beta$ is contained in the smooth loci $\psi_{s,0}^{-1}(\mathring{\mcal{X}}_0)$ where $\mathring{\mcal{X}}_0 \coloneqq \mcal{X}_0 \backslash \mathrm{Sing}(\mcal{X}_0)$ and $\scr{D}$ degenerates into $\scr{D}_0$ via the map $\psi_{s,0}$. Since the map is a diffeomorphism on $\psi_{s,0} \colon \psi_{s,0}^{-1}(\mathring{\mcal{X}_0}) \to \mathring{\mcal{X}}_0$, the intersection number can be computed at $\mcal{X}_0$. Since $\psi_{s,0,*} (\beta)$ is a basic disk and $\scr{D}$ is a toric divisor, the intersection number can be easily computed. The only basic disk class intersecting with $\scr{D}$ (resp. $\underline{\scr{D}}$) is a holomorphic disk emanated from the divisor corresponding to $f$ (resp. $\underline{f}$). Moreover, the intersection number is exactly one.  Combining it with~\eqref{equ_pob} and~\eqref{equ_potentialfl1nn2}, we obtain the formula~\eqref{equ_potentialfl1nn1}.
\end{proof}

\begin{proposition}\label{proposition_criticalpoints}
There exist bulk-parameters $c, \underline{c} \in \mathrm{U}(\Lambda)$ such that the bulk-deformed potential function $W_\frak{b}(L(t), \mathbf{y})$ in~\eqref{equ_potentialfl1nn1} admits a critical point each of which component is in $\mathrm{U}(\Lambda)$. 
\end{proposition}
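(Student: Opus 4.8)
The plan is to treat the second parenthesis in~\eqref{equ_potentialfl1nn1} as a small perturbation of the first. Dividing $W_{\mathfrak b}$ by the nonzero scalar $T^{(n-1)(1-t)}$ (which does not change critical points), it is equivalent to produce a critical point with all coordinates in $\mathrm{U}(\Lambda)$ of
\[
\widetilde W \;=\; A \;+\; T^{nt}\,B, \qquad A \;:=\; \frac{y_{1,2}}{y_{1,1}} + y_{1,2} + \frac{y_{1,1}}{y_{2,1}} + \frac{1}{y_{2,1}},
\]
where $B$ is the second parenthesis of~\eqref{equ_potentialfl1nn1} and $nt = (n-1+t)-(n-1)(1-t)>0$. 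Since $A$ involves only $y_{1,1},y_{1,2},y_{2,1}$, its critical locus in $(\C^\times)^{2n-1}$ is positive--dimensional, so the argument proceeds by a Lyapunov--Schmidt reduction over the non--archimedean field $\Lambda$; the point is that it must be iterated twice.

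First I would analyze $A$: a direct computation shows that its critical set is the subtorus $\mathcal C_0 = \{\,y_{1,1}=-1,\ y_{1,2}y_{2,1}=1\,\}\cong(\C^\times)^{2n-3}$, that $A$ vanishes identically on $\mathcal C_0$, and that the Hessian of $A$ in the two normal directions is nondegenerate (in the coordinates $y_{1,1},y_{1,2}$ at a point of $\mathcal C_0$ it equals $\left(\begin{smallmatrix}-2/y_{2,1}&-1\\-1&0\end{smallmatrix}\right)$, of determinant $-1$), so $\mathcal C_0$ is a clean critical manifold. Because equations $(1,1)$ and $(1,2)$ of $\nabla\widetilde W=0$ force the reductions of $y_{1,1}$ and of $y_{1,2}y_{2,1}$ to equal $-1$ and $1$, every critical point of $\widetilde W$ of the required kind lies over $\mathcal C_0$. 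The inverse function theorem over $\Lambda$ then solves $\partial_{y_{1,1}}\widetilde W=\partial_{y_{1,2}}\widetilde W=0$ for $(y_{1,1},y_{1,2})$ in terms of the remaining variables $\mathbf w$, and substituting gives an effective potential whose critical points correspond to those of $\widetilde W$; since $A|_{\mathcal C_0}=0$, its expansion is
\[
W_{\mathrm{eff}}(\mathbf w)\;=\;T^{nt}\,B_0(\mathbf w)\;+\;T^{2nt}\,Q(\mathbf w)\;+\;O(T^{3nt}),
\]
with $B_0 := B|_{\mathcal C_0}$ (obtained from $B$ by $y_{1,2}\mapsto 1/y_{2,1}$) and $Q=-\tfrac12\,\mathbf b\,H_A^{-1}\mathbf b^{\mathsf T}$, $\mathbf b=(\partial_{y_{1,1}}B,\partial_{y_{1,2}}B)|_{\mathcal C_0}$; explicitly one finds $Q=-\,y_{1,3}^{\,2}y_{2,1}^{\,3}$.

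Next I would study $B_0$. On $\mathcal C_0$ it is a sum of two ``linear chain'' Laurent potentials sharing only the variable $y_{2,1}$, and its critical equations force $y_{1,3}^{\,n-1}y_{2,1}^{\,n-2}=c$ \emph{and} $y_{1,3}^{\,n-1}y_{2,1}^{\,n-2}=(-1)^{n-1}\underline c$; hence $B_0$ has a critical point in $(\C^\times)^{2n-3}$ only when the degree--zero parts of $c,\underline c$ satisfy $c\equiv(-1)^{n-1}\underline c$, and then its critical locus is a $1$--torus $\mathcal C_1\cong\C^\times$, parametrized by $\zeta:=y_{1,3}y_{2,1}$ via $y_{1,3}=c\,\zeta^{-(n-2)},\ y_{2,1}=\zeta^{\,n-1}/c,\ y_{1,n}=c\,\zeta^{-1},\dots$, on which again $B_0\equiv 0$ and along which (one checks) $B_0$ is Morse--Bott. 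I would then make the bulk choice $\underline c=1$ and $c=\exp\!\big((n-1)\pi\sqrt{-1}+T^{nt}\big)=(-1)^{n-1}\exp(T^{nt})$, so that $c$ has degree--zero part $(-1)^{n-1}$ but a nonzero $T^{nt}$--coefficient. Writing $B_0=B_0^{(0)}+(\text{unit})\,T^{nt}\,y_{1,n}^{-1}$, where $B_0^{(0)}$ is the ``balanced'' potential with critical locus $\mathcal C_1$, a second Lyapunov--Schmidt reduction along $\mathcal C_1$ (using $B_0^{(0)}|_{\mathcal C_1}=0$) leaves the one--variable potential
\[
W_{\mathrm{eff},2}(\zeta)\;=\;T^{2nt}\Big((\text{unit})\,y_{1,n}^{-1}+Q\Big)\Big|_{\mathcal C_1}(\zeta)\;+\;O(T^{3nt})\;=\;T^{2nt}\big(\zeta-(-1)^{n-1}\zeta^{\,n+1}\big)+O(T^{3nt}),
\]
using $y_{1,n}^{-1}|_{\mathcal C_1}=c^{-1}\zeta$ and $Q|_{\mathcal C_1}=-c^{-1}\zeta^{\,n+1}$. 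The polynomial $\zeta-(-1)^{n-1}\zeta^{\,n+1}$ has $n$ critical points in $\C^\times$ (roots of $\zeta^{\,n}=(-1)^{n-1}/(n+1)$), all nondegenerate; choosing one and applying the inverse function theorem over $\Lambda$ lifts it through the two reductions to a critical point of $\widetilde W$, hence of $W_{\mathfrak b}$, with all coordinates in $\mathrm{U}(\Lambda)$.

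The main obstacle is exactly this double degeneracy of the ``leading term equation'': the critical loci of $A$ and then of $B|_{\mathcal C_0}$ are both positive--dimensional, and the two successively reduced leading potentials are \emph{monomials} ($Q|_{\mathcal C_1}\propto\zeta^{\,n+1}$), which carry no critical points on $\C^\times$. The role of turning on the bulk parameter with a deliberate order-$T^{nt}$ mismatch from the ``monotone'' value $c\equiv(-1)^{n-1}\underline c$ is precisely to add a second, lower--degree monomial to the twice--reduced potential, making it a nondegenerate function of $\zeta$. The bookkeeping of the two nested reductions, the verification that $\mathcal C_0$ and $\mathcal C_1$ are clean critical manifolds, and the computation of $Q$ and of the relevant restrictions to $\mathcal C_1$ are the technical core of the argument.
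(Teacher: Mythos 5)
Your proof takes a genuinely different route from the paper's, which solves the split leading term equation over $\C$ (Lemma~\ref{lemma_solsplitleadingterm}) and then extends it to $\Lambda$ directly: the paper posits the ansatz $y_{1,1}=y_{1,2}=y_{2,1}=-1-T^{nt}$, observes that $\partial_{(1,1)}W_\frak{b}=0$ is then automatic, and solves the remaining critical equations $\partial_{(1,j)}W_\frak{b}=0=\partial_{(j,1)}W_\frak{b}$ \emph{inductively} for $y_{1,j+1},y_{j+1,1}\in\mathrm{U}(\Lambda)$, absorbing the last two equations into the free parameters $c,\underline{c}$ at the end. There is no Lyapunov--Schmidt reduction and no intermediate effective potential; the bulk parameters are an \emph{output}, not an input, and the whole argument is five lines of bookkeeping. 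Your approach instead \emph{chooses} the bulk upfront, does two nested Lyapunov--Schmidt reductions over $\Lambda$, and lands on a one--variable polynomial $\zeta-(-1)^{n-1}\zeta^{n+1}$ whose nondegenerate critical points lift back. Your computations are correct as far as they go: $\mathcal{C}_0$, the vanishing of $A$ on it, the Hessian $H_A$ and its determinant $-1$, the second--order term $Q=-y_{1,3}^{\,2}y_{2,1}^{\,3}$, the consistency constraint $c_0=(-1)^{n-1}\underline{c}_0$, the vanishing of $B_0^{(0)}$ on $\mathcal{C}_1$, and the restrictions $y_{1,n}^{-1}|_{\mathcal{C}_1}=c_0^{-1}\zeta$ and $Q|_{\mathcal{C}_1}=-c_0^{-1}\zeta^{n+1}$ all check out. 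What your version buys is conceptual: it exposes the \emph{double degeneracy} of the leading term equation (the critical loci of $A$ and then of $B_0^{(0)}$ are both positive-dimensional, and the twice-reduced leading term is a monomial with no critical point on $\C^\times$), and it explains precisely why the bulk parameter must be turned on with an order-$T^{nt}$ shift. That structural explanation is implicit in the paper's split leading term equation machinery but not stated there in these terms.

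There is one genuine gap you should close: you assert ``along which (one checks) $B_0$ is Morse--Bott'' without verification. The nondegeneracy of the Hessian of $B_0^{(0)}$ transverse to the one-dimensional critical locus $\mathcal{C}_1$ in all $2n-4$ normal directions is not automatic and is essential for the second Lyapunov--Schmidt reduction to be valid; if the transverse Hessian had a kernel, the reduced potential would not have the claimed form and the whole second step would fail. (It does hold — for $n=3$ the $3\times 3$ logarithmic Hessian of $B_0^{(0)}$ at $\mathcal{C}_1$ is $\zeta\left(\begin{smallmatrix}2&1&0\\1&0&1\\0&1&-2\end{smallmatrix}\right)$, of rank $2$ with kernel exactly tangent to $\mathcal{C}_1$ — but the general-$n$ computation, which involves the tridiagonal Hessians of the two chain potentials coupled through $y_{2,1}$, needs to be carried out explicitly.) A secondary remark: the Lyapunov--Schmidt argument over $\Lambda$ silently uses that $\Lambda_0$ is Henselian; this is standard in the toric Floer literature but worth a sentence, since the paper's inductive argument avoids invoking it by construction.
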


\begin{proof}[Proof of Theorem~\ref{theorem_GZtorusfibernon}] 
By Proposition~\ref{proposition_criticalpoints}, the bulk-deformed Floer cohomology of $L(t)$ is isomorphic to the ordinary cohomology of $L(t)$. Consequently, the Lagrangian torus $L(t)$ is non-displaceable, see \cite[Section 8]{FOOOToric2}. Theorem~\ref{theorem_GZtorusfibernon} follows from Proposition~\ref{proposition_criticalpoints}. 
\end{proof}

Thus, it remains to prove Proposition~\ref{proposition_criticalpoints}. In \cite{ChoKimOhLGbulk}, the authors introduced the split leading term equation associated with a Lagrangian face $f$ to detect Lagrangian tori with non-vanishing bulk-deformed Floer cohomology located at a line segment joining a point of $f$ and the barycenter of the GZ polytope. A (complex) solution of the split leading term equation (over $\C$) will be the valuation zero part of a critical point of the bulk-deformed potential function. The strategy is to solve the split leading term equation first and then extend the solution to a critical point of the bulk-deformed potential function. Thus, Proposition~\ref{proposition_criticalpoints} is established. 

In our setting, we explain how the split leading term equation can be obtained from the ladder diagram. As an example, we explain the split leading term equation associated with $f_1$ in~\eqref{equ_fjface} by the example when $n = 3$. Consider the diagram corresponding to $f_1$ which has the $L$-shaped region at the left-bottom corner.   
The potential function is arranged as follows$\colon$
\begin{equation}\label{equ_potentialfl134}
W({\bf{y}})  = \left( \frac{y_{1,2}}{y_{1,1}} + y_{1,2} + \frac{y_{1,1}}{y_{2,1}} + \frac{1}{y_{2,1}} \right) T^{2 - 2t} + \left( \frac{1}{y_{1,3}} + \frac{y_{1,3}}{y_{1,2}} + \frac{y_{2,1}}{y_{3,1}} + y_{3,1} \right) T^{2+t}.
\end{equation}
We decompose~\eqref{equ_potentialfl134} along the guidance of the subgraph $\Gamma_{f_1}$ associated to $f_1$ in Theorem~\ref{theorem_AnChoKim} as in Figure~\ref{Conicfib}. Taking a bulk-parameter $\frak{b}$ as in~\eqref{equ_bulkparam} and setting $c \coloneqq \exp(\frak{c})$ and $\underline{c} \coloneqq \exp(\underline{\frak{c}})$, we have the following system of equations.
\begin{equation*}
\begin{cases}
\displaystyle W_\frak{b}^{\ydiagram{1,2}}({\bf{y}}) \coloneqq \frac{y_{1,2}}{y_{1,1}} + y_{1,2} + \frac{y_{1,1}}{y_{2,1}} + \frac{1}{y_{2,1}}\\ 
\displaystyle W_{\frak{b}{\hphantom{\ydiagram{1}}}}^{\ydiagram{1,1}}({\bf{y}}) \coloneqq c \cdot \frac{1}{y_{1,3}} + \frac{y_{1,3}}{y_{1,2}} + a \cdot y_{1,2}  \\  
\displaystyle W_\frak{b}^{\ydiagram{2}} ({\bf{y}}) \coloneqq \underline{c} \cdot {y_{3,1}} + \frac{y_{2,1}}{y_{3,1}} + \underline{a} \cdot \frac{1}{y_{2,1}}.
\end{cases}
\end{equation*}
Then the split leading term equation is the system of equations consisting of the logarithmic derivatives of the decomposed potential functions, that is, 
$$
\begin{cases}
\pa_{(1,1)} W^{\ydiagram{1,2}}_\frak{b} = 0, \,\, \pa_{(1,2)} W^{\ydiagram{1,2}}_\frak{b} = 0, \,\, \pa_{(2,1)} W^{\ydiagram{1,2}}_\frak{b} = 0, \\ 
\pa_{(1,2)} W^{\ydiagram{1,1}}_\frak{b} = 0,\,\, \pa_{(1,3)} W^{\ydiagram{1,1}}_\frak{b}= 0, \\
\pa_{(2,1)} W^{\ydiagram{2}}_\frak{b} = 0, \,\, \pa_{(3,1)} W^{\ydiagram{2}}_\frak{b} = 0, 
\end{cases}
$$
where $\pa_{(i,j)} \coloneqq y_{i,j} \frac{\partial}{\partial y_{i,j}}$. This system has a complex solution $$
y_{1,1} = y_{1,2} = y_{2,1} = -1, \, y_{1,3} = 1, \, y_{3,1} = -1, \, c = -1, \, \underline{c} = -1, \, a = 1, \mbox{ and } \underline{a} = -1.
$$
This complex solution extends to a critical point of the bulk-deformed potential later on.

\begin{figure}[ht]
	\scalebox{1}{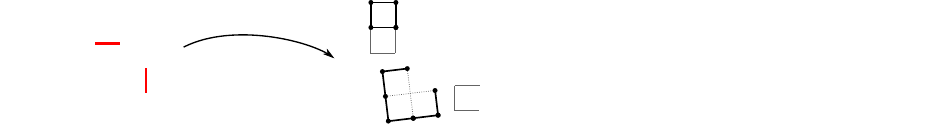}
	\caption{\label{Conicfib} Decomposition of the ladder diagram $\Gamma_3$ by $\Gamma_{f_1}$.}	
\end{figure}

For the general case where $n \geq 3$, we decompose $W_\mathfrak{b}({\bf{y}})$ into 
\begin{equation}\label{equ_decomposedpotent}
\begin{cases}
\displaystyle W_\frak{b}^{\ydiagram{1,2}}({\mathbf{y}}) = \frac{y_{1,2}}{y_{1,1}} + y_{1,2} + \frac{y_{1,1}}{y_{2,1}} + \frac{1}{y_{2,1}}, \\ 
\displaystyle W_{\frak{b}{\hphantom{\ydiagram{1}}}}^{\ydiagram{1,1}}({\mathbf{y}}) = c \cdot \frac{1}{y_{1,n}} + \frac{y_{1,n}}{y_{1,n-1}} + \cdots + \frac{y_{1,3}}{y_{1,2}} + {a} \cdot {y_{1,2}}, \\  
\displaystyle W_\frak{b}^{\ydiagram{2}} ({\bf{y}}) =  \underline{c} \cdot {y_{n,1}} + \frac{y_{n-1,1}}{y_{n,1}} + \cdots + \frac{y_{2,1}}{y_{3,1}} + \underline{a} \cdot \frac{1}{y_{2,1}}.
\end{cases}
\end{equation}
The split leading term equation comes from the logarithmic derivatives of the decomposed potential functions in~\eqref{equ_decomposedpotent}.

\begin{lemma}\label{lemma_solsplitleadingterm}
The split leading term equation has a complex solution in $(\C^*)^{2n-1}_{\mathbf{y}} \times (\C^*)^2_{c, \underline{c}}$ with a choice of $a = 1$ and $\underline{a} = -1$.
\end{lemma}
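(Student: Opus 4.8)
The split leading term equation decouples into the three blocks coming respectively from $W_\frak{b}^{\ydiagram{1,2}}$, $W_\frak{b}^{\ydiagram{1,1}}$, and $W_\frak{b}^{\ydiagram{2}}$ in~\eqref{equ_decomposedpotent}, and the only variables shared between two distinct blocks are $y_{1,2}$ (occurring in the first and the second) and $y_{2,1}$ (occurring in the first and the third). The plan is to solve each block in isolation, noting that both the $\ydiagram{1,1}$ block and the $\ydiagram{2}$ block are of ``chain'' type whose logarithmic-derivative equations force a geometric progression, and then to verify that the single relation which the $\ydiagram{1,2}$ block imposes on $y_{1,2}$ and $y_{2,1}$ can be satisfied; this will leave a one-parameter family of solutions.

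First I would treat the $\ydiagram{1,2}$ block. Computing the three logarithmic derivatives of $y_{1,2}y_{1,1}^{-1} + y_{1,2} + y_{1,1}y_{2,1}^{-1} + y_{2,1}^{-1}$ on $(\C^*)^{2n-1}$, one sees that $\pa_{(1,2)} W_\frak{b}^{\ydiagram{1,2}} = 0$ and $\pa_{(2,1)} W_\frak{b}^{\ydiagram{1,2}} = 0$ both reduce to $y_{1,1} = -1$, and then $\pa_{(1,1)} W_\frak{b}^{\ydiagram{1,2}} = 0$ becomes $y_{1,2}\,y_{2,1} = y_{1,1}^2 = 1$. Thus this block forces $y_{1,1} = -1$ and contributes the single constraint $y_{1,2}\,y_{2,1} = 1$ (the coincidence of two of its equations is exactly what accounts for the surviving free parameter below).

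Next I would solve the $\ydiagram{1,1}$ block with the normalization $a = 1$. The interior equations $\pa_{(1,k)} W_\frak{b}^{\ydiagram{1,1}} = 0$ for $3 \leq k \leq n-1$ say precisely that the successive ratios $y_{1,3}/y_{1,2}, \dots, y_{1,n}/y_{1,n-1}$ are all equal to one common value $\rho$; the bottom equation $\pa_{(1,2)} W_\frak{b}^{\ydiagram{1,1}} = 0$ identifies $\rho$ with $a\, y_{1,2} = y_{1,2}$, so $y_{1,k} = \rho^{k-1}$ for $2 \leq k \leq n$; and the top equation $\pa_{(1,n)} W_\frak{b}^{\ydiagram{1,1}} = 0$ gives $c = \rho\, y_{1,n} = \rho^{n}$. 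The identical computation applied to the $\ydiagram{2}$ block with $\underline{a} = -1$ shows that the ratios $y_{2,1}/y_{3,1}, \dots, y_{n-1,1}/y_{n,1}$ all equal a common value $\sigma$, that the end equations give $\sigma = \underline{a}/y_{2,1} = -1/y_{2,1}$ and $\underline{c} = \sigma/y_{n,1} = -\sigma^{n}$, and hence $y_{k,1} = -\sigma^{-(k-1)}$ for $2 \leq k \leq n$.

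Finally I would carry out the matching step: substituting $y_{1,2} = \rho$ and $y_{2,1} = -1/\sigma$ into the constraint $y_{1,2}\,y_{2,1} = 1$ forces $\sigma = -\rho$, and then for any $\rho \in \C^*$ the resulting values
\[
y_{1,1} = -1,\qquad y_{1,k} = \rho^{k-1},\qquad y_{k,1} = -(-\rho)^{-(k-1)}\ \ (2 \leq k \leq n),\qquad c = \rho^{n},\qquad \underline{c} = -(-\rho)^{n}
\]
all lie in $\C^*$ and solve the full split leading term equation; taking $\rho = -1$ recovers, for $n = 3$, the explicit solution displayed before the statement. The only point that requires attention is this last compatibility check: each chain block determines $y_{1,2}$ (resp.\ $y_{2,1}$) only up to the free scalar $\rho$ (resp.\ $\sigma$), so the extra relation $y_{1,2}\,y_{2,1} = 1$ coming from the $\ydiagram{1,2}$ block is solvable, and the normalization $a = 1$, $\underline{a} = -1$ is precisely what makes the three blocks mutually consistent — no genuine obstruction arises, and the argument is otherwise a routine computation.
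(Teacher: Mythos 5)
Your argument is correct and uses the same block decomposition and geometric-progression structure as the paper's proof; the difference is one of thoroughness rather than strategy. Where the paper simply exhibits the solution $y_{1,j} = (-1)^{j-1}$ (for $j \geq 2$), $y_{1,1} = y_{j,1} = \underline{c} = -1$, $c = (-1)^n$ and checks it against the recursion $y_{1,j+1} = y_{1,j}^2/y_{1,j-1}$, $y_{j+1,1} = y_{j,1}^2/y_{j-1,1}$, you solve each block parametrically, isolate the single coupling constraint $y_{1,2}\,y_{2,1} = 1$ coming from the $\ydiagram{1,2}$ block, and produce the full one-parameter family of solutions (your $\rho = -1$ specializing to the paper's choice), which in addition explains structurally why the system is solvable.
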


\begin{proof}
Taking $y_{1,1} = y_{1,2} = y_{2,1} = y_{3,1} = - 1$ and $y_{1,3} = 1$, every logarithmic derivative $W_\frak{b}^{\ydiagram{1,2}}({\mathbf{y}})$ vanishes. Moreover, the derivatives of $W_{\frak{b}{\hphantom{\ydiagram{1}}}}^{\ydiagram{1,1}}({\mathbf{y}})$ and $W_\frak{b}^{\ydiagram{2}} ({\bf{y}})$ with respect to $y_{1,2}$ and $y_{2,1}$ also vanish. The split leading term equation gives rise to
\begin{equation}\label{equ_solsplitleading}
y_{1,j+1} = \frac{y_{1,j}^2}{y_{1,j-1}}, y_{j+1,1} = \frac{y_{j,1}^2}{y_{j-1,1}}, c = \frac{y_{1,4}^2}{y_{1,3}}, \underline{c} = \frac{y_{3,1}}{y_{4,1}^2} 
\end{equation}
The remaining variables can be inductively determined. It is straightforward to see that $(y_{1,j} = (-1)^{j-1}, y_{j, 1} = \underline{c} = -1, c =  (-1)^n)$ is a solution of~\eqref{equ_solsplitleading}.
\end{proof}

We are ready to prove Proposition~\ref{proposition_criticalpoints}.

\begin{proof}[Proof of Proposition~\ref{proposition_criticalpoints}]
We extend a solution of the split leading term equation in Lemma~\ref{lemma_solsplitleadingterm} to a critical point of the bulk-deformed potential function $W_\frak{b}(\mathbf{y})$ in~\eqref{equ_potentialfl1nn1} over $\mathrm{U}(\Lambda)$. For notational simplicity, we declare 
$$
\partial^\frak{b}_{(i,j)} (\mathbf{y}) \coloneqq  y_{i,j} \frac{\partial}{\partial y_{i,j}} W_\frak{b}(\mathbf{y}).
$$

We begin by extending the complex solution of the split leading term equation as 
$$
y_{1,1} = y_{1,2} = y_{2,1} = - 1 - T^{nt} \in \mathrm{U}(\Lambda),
$$
reflecting our choice of $a$. Then we have $\partial^\frak{b}_{(1,1)}(\mathbf{y}) = 0$. Also,
\begin{align}
&T^{(n-1)(t-1)} \cdot \partial_{(1,2)}^\frak{b}(\mathbf{y}) = 1 - 1 - T^{nt}  + y_{1,3} T^{nt}
= (y_{1,3} - 1) T^{nt} \\
&T^{(n-1)(t-1)} \cdot \partial_{(2,1)}^\frak{b}(\mathbf{y}) = -1 + 1 - T^{nt} - \frac{1}{y_{3,1}} T^{nt}
= - \left(1 + \frac{1}{y_{3,1}} \right) T^{nt}
\end{align}
mod $\Lambda_{> nt}$. We then solve $y_{1,3}$ (resp. $y_{3,1}$) such that $\partial_{(1,2)}^\frak{b}(\mathbf{y}) = 0$ (resp. $\partial_{(2,1)}^\frak{b}(\mathbf{y}) = 0$) holds. Note that
$y_{1,3} = 1$ and $y_{3,1} = -1$ mod $\Lambda_{>0}$, which agrees with a solution of the split leading term equation with the choice of $a= 1, \underline{a} = -1$. In particular, both $y_{1,3}$ and $y_{3,1}$ are in $\mathrm{U}(\Lambda)$.

Suppose that we have successively solved the equations
$$
\partial^\frak{b}_{(1,s)}(\mathbf{y}) = 0 \mbox{ and } \partial^\frak{b}_{(s,1)}(\mathbf{y}) = 0 \quad \mbox{ for $s = 1, 2, \cdots, j-1$}
$$
by determining $y_{1,1}, y_{1,2}, y_{2,1}, \cdots, y_{1,j}, y_{j,1} \in \mathrm{U}(\Lambda)$. The equations 
\begin{align}
&\partial^\frak{b}_{(1,j)}(\mathbf{y}) = 0 \,\, \Rightarrow \,\, y_{1,j+1} = \frac{y_{1,j}^2}{y_{1,j-1}}  \\
&\partial^\frak{b}_{(j,1)}(\mathbf{y}) = 0 \,\, \Rightarrow \,\, y_{j+1,1} = \frac{y_{j,1}^2}{y_{j-1,1}} 
\end{align}
and the pre-determined solutions determine $y_{1,j+1}$ and $y_{j+1,1}$. As 
\begin{equation}
y_{1,2s+1} = 1, \,\, y_{1,2s} = y_{2s, 1} = y_{2s+1,1} = -1 \mbox{ mod $\Lambda_{>0}$ for $j \geq 1$},
\end{equation} 
we have $y_{1,2s+3} = 1, y_{1, 2s+2} = y_{2s+2, 1}= y_{2s+3,1} = -1$ mod $\Lambda_{>0}$ as in Lemma~\ref{lemma_solsplitleadingterm}. In particular, $y_{1,j+1}, y_{j+1,1} \in \mathrm{U}(\Lambda)$. 
 
So far, we have exhausted all variables $y_{i,j} \in \mathrm{U}(\Lambda)
$ making $\partial^\frak{b}_{(i,j)}(\mathbf{y}) = 0$. Lastly, by taking the bulk-parameters
$$
c = \frac{y_{1,n}^2}{y_{1,n-1}} \in \mathrm{U}(\Lambda) \, \mbox{ and } \, \underline{c} = \frac{y_{n-1,1}}{y_{n,1}^2} \in \mathrm{U}(\Lambda), 
$$ 
one can make the last two equations $\partial^\frak{b}_{(1,n)}(\mathbf{y}) = 0$ and $\partial^\frak{b}_{(n,1)}(\mathbf{y}) = 0$ hold.
\end{proof}


\providecommand{\bysame}{\leavevmode\hbox to3em{\hrulefill}\thinspace}
\providecommand{\MR}{\relax\ifhmode\unskip\space\fi MR }
\providecommand{\MRhref}[2]{%
  \href{http://www.ams.org/mathscinet-getitem?mr=#1}{#2}
}
\providecommand{\href}[2]{#2}

\end{document}